%% file: main.tex
\documentclass[11pt]{amsart}

\usepackage{amsmath,amssymb,amsthm,mathtools}
\usepackage{tikz-cd}
\usepackage{enumitem}
\usepackage{hyperref}
\usepackage[margin=1.15in]{geometry}

\numberwithin{equation}{section}

\newtheorem{theorem}{Theorem}[section]
\newtheorem{proposition}[theorem]{Proposition}
\newtheorem{lemma}[theorem]{Lemma}
\newtheorem{corollary}[theorem]{Corollary}

\theoremstyle{definition}

\newtheorem{remark}[theorem]{Remark}

\DeclareMathOperator{\Proj}{Proj}

\DeclareMathOperator{\ord}{ord}

\DeclareMathOperator{\Tr}{Tr}

\newcommand{\Fp}{\mathbb F_p}

\newcommand{\calS}{\mathcal S}
\newcommand{\ttt}{\mathbf t}
\newcommand{\aideal}{\mathfrak a}
\newcommand{\LL}{\mathbb L}

\title[Resolutions of \(p\)-cyclic quotients]
{Resolutions of Linear \(p\)-Cyclic Quotient Singularities}

\author[L. Fan]{Linghu Fan}
\address{Kavli Institute for the Physics and Mathematics of the Universe (WPI), The University of Tokyo, 5-1-5 Kashiwanoha,
Kashiwa-shi, Chiba 277-8561, Japan}
\email{linghu.fan@ipmu.jp}

\author[H. Li]{Hongmin Li}
\address{School of Life Science and Technology, Institute of Science Tokyo,
2-12-1 Ookayama, Meguro-ku, Tokyo 152-8550, Japan}
\address{Department of Computational Biology and Medical Sciences, Graduate
School of Frontier Sciences, The University of Tokyo, 5-1-5 Kashiwanoha,
Kashiwa-shi, Chiba 277-8561, Japan}
\email{lihongmin@edu.k.u-tokyo.ac.jp}

\date{\today}

\begin{document}

\begin{abstract}
Let \(k\) be an algebraically closed field of positive characteristic \(p\), and
let \(C_p\) act linearly on a finite-dimensional \(k\)-vector space \(V\), with indecomposable Jordan summands \(V_{d_i}\) of dimension \(d_i\).
We study projective (crepant) resolutions of \(V/C_p\) by constructing a model using the invariant weighted blowup.  For \(V=V_2^{\oplus n}\), our model
is smooth and is the normalization of a blowup of \(V/C_p\).  It has a unique
exceptional divisor of discrepancy \(n-p\), such that our model is a crepant resolution when \(n=p\). In almost all other cases when the quotient is canonical but not terminal, the invariant weighted blowup model does not produce crepant resolutions, but gives the unique nontrivial projective crepant birational model of the quotient. Consequently, up to trivial summands, we classify the \(p\)-cyclic linear quotients admitting a projective crepant resolution.
\end{abstract}

\subjclass[2020]{14B05 (primary), 14E30, 14L30, 13A50 (secondary)}

\keywords{wild quotient singularities, crepant resolutions, terminal singularities, modular invariant theory}

\maketitle

\input{pcyclic-sections/introduction-and-setup}
\input{pcyclic-sections/all-v2-family}

\input{pcyclic-sections/higher-block-obstruction}

\input{pcyclic-sections/boundary-classification}

\section*{Acknowledgements}
This research used the
\href{https://utelecon.adm.u-tokyo.ac.jp/en/research_computing/utokyo_azure/}{UTokyo Azure service}.
The authors also acknowledge support from OpenAI through the ChatGPT for
Academic Researchers program. Linghu Fan thanks Professor Takehiko Yasuda for valuable discussions. Linghu Fan was supported by JSPS KAKENHI Grant Number JP25KJ1153, and World Premier International Research Center Initiative (WPI), MEXT, Japan. Hongmin Li was supported by the 2025 Google Research Grant from Google Asia
Pacific Pte.\ Ltd.

\section*{Use of generative AI}
During this work, Hongmin Li used ChatGPT (OpenAI) to assist with
exploratory calculations in specific examples, the exploration of
mathematical arguments and possible proof strategies, and the organization
of working notes into draft proofs. GPT-5.5 was the principal model used
for the exploration and development of these proof drafts.
These drafts were subsequently
discussed, assessed, and refined by the authors, incorporating Linghu Fan's
mathematical feedback and correction. ChatGPT was also used to assist with revisions to
the exposition. Both authors revised the manuscript and take full
responsibility for its mathematical content and final presentation.

\bibliographystyle{alpha}
\bibliography{bibliography/refs}

\end{document}

%% file: pcyclic-sections/introduction-and-setup.tex
\section{Introduction}\label{sec:pcyclic-introduction}

Let \(k\) be an algebraically closed field of prime characteristic \(p\), and
let
\[
  G=C_p=\langle\sigma\rangle
\]
act linearly on a finite-dimensional \(k\)-vector space \(V\).  If
\(J_d(\lambda)\) denotes the Jordan block of size \(d\) and eigenvalue
\(\lambda\), then by abuse of notation, every indecomposable \(kG\)-module is
\[
  V_d:=J_d(1),\qquad 1\le d\le p.
\]
Thus, every representation space of \(C_p\) can be written as
\[
  V\simeq\bigoplus_{i=1}^s V_{d_i}.
\]
We additionally define
\[
  D_V:=\sum_{i=1}^s\binom{d_i}{2}
\]
according to Yasuda's \(p\)-cyclic McKay correspondence~\cite{Yasuda2014pCyclic}. The summands \(V_1\) are trivial and make no contribution to \(D_V\).

\(D_V\) can be used to classify the quotient singularities from the perspective of the minimal model theory.  For a \(p\)-cyclic quotient without pseudo-reflections, the quotient is log canonical, canonical, or terminal precisely
when
\[
  D_V\ge p-1,\qquad D_V\ge p,\qquad D_V>p,
\]
respectively~\cite[Corollary~1.4]{Yasuda2019Discrepancies}.  In this paper, we consider the
canonical non-terminal boundary condition \(D_V=p\) and ask when the quotient
\[
  X:=V/G
\]
admits a projective crepant resolution.

To study such \(p\)-cyclic quotients, we construct a natural model \(W\) by taking the \(C_p\)-invariant part of a natural weighted blowup of the representation space. The first result treats the direct sum of two-dimensional blocks.  In this
case we consider the ordinary blowup of the fixed locus on
the representation space \(V\), and its quotient can be described through polynomial affine charts.

\begin{theorem}\label{thm:all-v2}
Let \(n\ge2\), \(V=V_2^{\oplus n}\), and \(R=k[V]^{G}\).  Let
\(J\subset R\) be the ideal generated by the invariant linear coordinates
that cut out the fixed locus on \(V\).  Then the natural model
\[
  W
  \simeq \operatorname{Norm}\operatorname{Bl}_J(V/G).
\]
The induced morphism
\[
  f\colon W\longrightarrow V/G
\]
is projective and birational, and \(W\) is smooth.  Here \(\dim V=2n\).  The
morphism \(f\) has a unique exceptional prime \(F\), and
\[
  K_W=f^*K_{V/G}+(n-p)F.
\]
Moreover, we can compute the Euler characteristics
\[
  \chi(W)=\chi(F)=n.
\]
In particular, when \(n=p\), \(f\) is a projective crepant resolution of
\(V_2^{\oplus p}/G\).
\end{theorem}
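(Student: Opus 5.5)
The plan is to compute everything in explicit affine charts of the $G$-quotient of the ordinary blowup of $V$ along its fixed locus, taking that quotient as the natural model $W$. Choose dual coordinates $x_1,\dots,x_n,y_1,\dots,y_n$ on $V=V_2^{\oplus n}$ with $\sigma(x_j)=x_j$ and $\sigma(y_j)=y_j+x_j$. The fixed locus is $V^G=\{x_1=\dots=x_n=0\}$, so $J=(x_1,\dots,x_n)R$. Moreover $Jk[V]=I:=(x_1,\dots,x_n)$, and hence $\operatorname{Bl}_{I}V$ carries a $G$-action.

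\textbf{Step 1: charts.} On the chart $\{x_i\ne0\}$ of $\operatorname{Bl}_IV$ the coordinates are
\[
x_i,\qquad t_j=x_j/x_i\ (j\ne i),\qquad y_1,\dots,y_n .
\]
The elements $x_i$ and $t_j$ are invariant. Setting $w_j=y_j-t_jy_i$ for $j\ne i$, the $w_j$ are also invariant. In the coordinates $(x_i,t,w,y_i)$ the only non-trivial action is $y_i\mapsto y_i+x_i$. By the rank-two computation $k[V_2]^G=k[x,N(y)]$, the invariant ring of this chart is the polynomial ring
\[
k[x_i,\,t_j,\,w_j,\,N_i],\qquad N_i=y_i^p-x_i^{p-1}y_i .
\]
So every chart of $W=(\operatorname{Bl}_IV)/G$ is $\mathbb A^{2n}$ and $W$ is smooth.

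\textbf{Step 2: identification with $\operatorname{Norm}\operatorname{Bl}_J(V/G)$.} The map $W\to V/G$ is projective, since $\operatorname{Bl}_IV\to V$ is projective and we take $G$-quotients. It is birational because $G$ acts freely off $V^G$ and $n\ge2$. The ideal $J\mathcal O_W$ is invertible, generated by $x_i$ on the $i$-th chart. By the universal property of blowing up we get a morphism $W\to\operatorname{Bl}_J(V/G)$. This morphism is finite, because the Rees algebra $\bigoplus I^m$ is integral over $\bigoplus J^m$ (as $k[V]$ is integral over $R$ and $I=Jk[V]$), and so is its degree-wise invariant part. It is also birational. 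Since $W$ is normal, $W$ is the normalization. I expect this step, with its gluing and finiteness bookkeeping, to be the main technical point; everything else is direct computation.

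\textbf{Step 3: exceptional divisor and discrepancy.} In each chart the exceptional locus is $\{x_i=0\}$, and these glue to a single prime $F$. Since $G$ acts on $V$ with no pseudo-reflections, $V\to X$ is étale in codimension one. Hence $K_X$ is the divisor of the invariant form $\Omega=\bigwedge_j dx_j\wedge dy_j$. On the blowup chart we have
\[
\Omega=\pm x_i^{\,n-1}\,dx_i\wedge dt\wedge dw\wedge dy_i .
\]
Because $dN_i\equiv -x_i^{p-1}dy_i \pmod{dx_i}$, this becomes
\[
\Omega=\pm x_i^{\,n-p}\,dx_i\wedge dt\wedge dw\wedge dN_i .
\]
Therefore $K_W=f^*K_X+(n-p)F$. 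When $n=p$ the discrepancy vanishes, so $f$ is crepant.

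\textbf{Step 4: Euler characteristics.} Use $\ell$-adic compactly supported Euler characteristics. The exceptional divisor of the blowup is $E\cong\mathbb P^{n-1}\times\mathbb A^n_y$, on which $G$ acts by translating each fibre along the line $[x]$. The chart description shows that $F=E/G$ is an $\mathbb A^n$-bundle over $\mathbb P^{n-1}$, with fibre coordinates $(w,N_i)$. Hence $\chi(F)=n$. The complement $W\setminus F$ is isomorphic to $(V\setminus V^G)/G$, a free quotient. By multiplicativity of $\chi_c$ for finite étale covers,
\[
\chi_c\bigl((V\setminus V^G)/G\bigr)=\tfrac1p\,\chi_c(V\setminus V^G)=\tfrac1p\,(1-1)=0 .
\]
Thus $\chi(W)=\chi(F)+0=n$.
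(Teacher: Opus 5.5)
Your Steps 1--3 follow the paper's route: the same invariant charts $k[x_i,t_j,w_j,N_i]$, and the same comparison $\Omega=\pm x_i^{\,n-p}\,dx_i\wedge dt\wedge dw\wedge dN_i$ for the discrepancy. In Step 2 you use the universal property of the blowup and then ``finite, birational, normal source $\Rightarrow$ normalization''. The paper instead shows that $\mathcal S^G$ is the integral closure of $R[J\mathbf t]$. Both packagings work.

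The genuine gap is in Step 4. You compute $\chi_c\bigl((V\setminus V^G)/G\bigr)=\tfrac1p\chi_c(V\setminus V^G)$ by ``multiplicativity of $\chi_c$ for finite \'etale covers''. In characteristic $p$ this holds for covers tamely ramified at infinity, but it fails in general. For example, the Artin--Schreier map $\mathbb A^1\to\mathbb A^1$, $y\mapsto y^p-y$, is finite \'etale of degree $p$, yet both sides have $\chi_c=1$. Your cover is a $C_p$-torsor in characteristic $p$, which is exactly the wild situation. It even contains that example as the $G$-stable line $x=(1,0,\dots,0)$, $y_2=\dots=y_n=0$. So the step is unjustified, even though the value $0$ it produces happens to be correct.

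The repair is to apply your chart description to all of $W$, not only to $F$. Consider the morphism $W\to\mathbb P^{n-1}$ recording the line $[x_1:\cdots:x_n]$, i.e. the map induced by the degree-one invariants $x_j\mathbf t$. The preimage of the $i$-th standard chart is $\operatorname{Spec}k[x_i,t_j,w_j,N_i]$, with projection $(x_i,t,w,N_i)\mapsto t$. Hence $W$ is Zariski-locally an $\mathbb A^{n+1}$-bundle over $\mathbb P^{n-1}$, and $\chi(W)=n$ directly. Equivalently, $W\setminus F$ is locally $\mathbb A^{n-1}\times\mathbb G_m\times\mathbb A^n$, so $\chi_c(W\setminus F)=0$. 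The paper reaches the same conclusion through the $\mathbb A^1$-torsor $W\to\operatorname{Tot}_{\mathbb P^{n-1}}(L\oplus Q)$, which gives $[W]=\mathbb L^{n+1}[\mathbb P^{n-1}]$ in $K_0(\operatorname{Var}_k)$.

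A smaller error, which does not affect your argument: $G$ does not translate the fibres of $E\cong\mathbb P^{n-1}\times\mathbb A^n_y$. Since $\sigma(y_i)-y_i=x_i$ vanishes on $E$, the $C_p$-action on $E$ is trivial. The translation along $[x]$ you describe is the degenerate $\alpha_p$-action. Moreover, $E\to F$ is purely inseparable of degree $p$, so $F$ is not $E/G$ as a scheme. Your description of $F$ as an $\mathbb A^n$-bundle over $\mathbb P^{n-1}$ is still correct, because it comes from the charts.
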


When \(n=p\), the calculation of Euler characteristics here is compatible
with the \(p\)-cyclic McKay correspondence and with the explicit calculations
in~\cite{Yasuda2014pCyclic,Fan2024EulerCrepant}.  On the exceptional divisor,
the degeneration of the constant \(C_p\)-action is related to the linear
\(\alpha_p\)-quotients studied by Posva, R\"osler, and
Yasuda~\cite{PosvaRoslerYasuda2026AlphaP}.

Assume now that \(p\ge5\), \(D_V=p\), and \(V\) contains a Jordan block
of size at least \(3\).  The natural model \(W\) is terminal and crepant,
but a square-root chart exhibits a \(\mu_2\)-quotient singularity on \(W\).
We identify \(W\) with the relative Proj of the valuation algebra associated
with the unique exceptional divisorial valuation of discrepancy zero over
\(X\).  This identifies every nontrivial projective crepant birational
model of \(X\) with \(W\).  Since \(W\) is singular, \(X\) admits no
projective crepant resolution.  More precisely, we prove the following theorem.

\begin{theorem}[=Theorem~\ref{thm:hb-rigidity}]\label{thm:higher-block}
Assume \(p\ge5\) and \(D_V=p\).  If \(V\) contains a summand \(V_d\) with
\(d\ge3\), then the natural model
\[
  W
  \longrightarrow X=V/G
\]
is projective, crepant, terminal, and singular.  It is the unique nontrivial
projective crepant birational model of \(X\).  Consequently, \(X\) has no
projective crepant resolution.
\end{theorem}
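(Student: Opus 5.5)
We first reduce the possible $V$ to a finite list. The condition $D_V=p$ with some $d_i\ge3$ means $\binom{d}{2}$ summands add to $p$. We will not need a full list. What we use is that the weights of the natural weighted blowup come from the Jordan filtration. On $V_d$ we take coordinates $x_{1},\dots,x_{d}$ with $\sigma$ acting unipotently, and give $x_j$ weight $j-1$, or the appropriate normalized weight, so that the invariant weighted blowup has exceptional divisor $E$ with weighted-blowup discrepancy matching Yasuda's formula. The plan has three parts.

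\textbf{Step 1: $W\to X$ is projective and crepant.} The valuation $v$ on $k(V)^G$ induced by the weighted valuation on $k(V)$ defines a graded valuation algebra
\[
  \mathcal{R}=\bigoplus_{m\ge0}\{f\in R: v(f)\ge m\}.
\]
We show that $W\simeq\Proj_X\mathcal R$, so $W$ is projective over $X$. By normality and the fact that $\mathcal{R}$ is the valuation algebra, the unique exceptional prime is the divisor $F$ attached to $v$. Its discrepancy is computed by comparing with the weighted blowup $\widetilde V\to V$, where the discrepancy is the sum of weights minus one. We account for the wild ramification of $\widetilde V\to W$ along $E$ via the different, computed from the Artin–Schreier form of the action in a chart. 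Together these give $a(F,X)=D_V-p=0$, which is the same bookkeeping as in the $V_2^{\oplus n}$ case of Theorem~\ref{thm:all-v2} (exponent $n-p$).

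\textbf{Step 2: charts.} We cover $W$ by invariant charts. In the chart of a top coordinate of a block $V_d$ with $d\ge3$, the weight is even or involves a square root. Adjoining $x^{1/2}$ makes the chart a quotient of a smooth variety by $\mu_2$, with $\mu_2$ acting with at least two nontrivial weights and without reflections. This gives the $\mu_2$-quotient singularity, which is terminal for codimension at least $3$ and in particular singular. In the remaining charts we show that $W$ is smooth or a similar tame quotient. Hence $W$ is terminal, and its singular locus is nonempty.

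\textbf{Step 3: uniqueness.} Let $Y\to X$ be a nontrivial projective crepant birational model with $Y$ normal and $K_Y$ pulling back $K_X$. Every exceptional divisor of $Y$ has discrepancy $0$ over $X$. By Yasuda's discrepancy results, via the McKay-type formula through $G$-covers / $v$-functions, the divisors with discrepancy $0$ over $X$ come from the minimum of $\mathrm{sht}-\mathrm{v}$ on the space of $G$-covers. At $D_V=p$ this minimum should be attained uniquely, by the smallest ramification jump, giving exactly the valuation $v$. So $Y$ has exactly one exceptional divisor, namely $F$. For a projective $Y$ with exceptional divisor $F$ and $-F$ relatively ample (since $Y$ is $\mathbb{Q}$-factorial over $X$ in the Picard rank one situation), $Y=\Proj_X\bigoplus_m f_*\mathcal O_Y(-mF)=\Proj_X\mathcal R=W$. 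We get relative ampleness of $-F$ on $Y$ from the negativity lemma: $F$ is the only exceptional divisor and $Y$ is projective over $X$, so some relatively ample divisor is $\equiv -cF$ with $c>0$. Since $W$ is singular and unique, no crepant resolution exists: any crepant resolution would be such a $Y$.

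\textbf{Main obstacle.} The hardest step is uniqueness of the discrepancy-zero divisor. This requires an explicit analysis of the integral over $G$-covers, with the contribution of each ramification jump $j$ computed as $\mathrm{sht}_V(j)-j$ in terms of the $d_i$. We also need to show that only $j=1$ achieves equality, which uses $p\ge5$ and $d\ge3$. We would first attempt it by direct comparison of $\sum_i\sum_{l<d_i}\lceil lj/p\rceil$ with $j+\dots$. A secondary difficulty is the normality of $\mathcal R$-charts.
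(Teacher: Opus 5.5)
Your Step 1 is essentially the paper's argument: the Jacobian order satisfies \(D_V-1=e(E/F)\,a(F;X)+d(B/B^G)\) with \(e=1\) and \(d=p-1\), and \(\ord_E=\ord_F\) on \(k(X)\) identifies \(W\) with the valuation Proj.

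\textbf{Step 3 has a genuine gap.} You assert that the discrepancy-zero valuations over \(X\) can be read off from where \(\mathrm{sht}-\mathrm v\) attains its minimum on the space of \(G\)-covers, and that this minimum ``should be attained uniquely''. That is not a proof. The McKay correspondence computes a motivic measure, and turning it into a list of crepant valuations needs a separate dictionary, for example crepant divisors \(\leftrightarrow\) dimension-\((N-1)\) terms of the stringy motive. In characteristic \(p\) such a dictionary cannot be obtained by comparing with a log resolution. Moreover, a single jump stratum could a priori account for several valuations.

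The missing idea is that no such analysis is needed. Suppose \(W\) is crepant and \emph{terminal} with unique exceptional prime \(F\). Realize a divisorial valuation of discrepancy zero over \(X\) as a prime \(F'\) on a normal model over \(W\). Either \(F'\) is exceptional over \(W\), and then \(a(F';X)=a(F';W)>0\) because \(K_W=f^*K_X\), a contradiction; or \(F'\) is a divisor of \(W\) exceptional over \(X\), i.e.\ \(F\).

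Step 3 also omits the factoriality of \(X\): \(\pi\) is \'etale in codimension one and \(C_p\) has no nontrivial characters into \(k^\times\). Without it you only get that \(Z\) has \emph{at most} one exceptional prime. You cannot exclude a small projective crepant \(Z\to X\). Nor can you write \(H-g^*g_*H=aF_Z\) for a \(g\)-ample \(H\), which is what lets the negativity lemma force \(a<0\). Your appeal to \(\mathbb Q\)-factoriality ``in the Picard rank one situation'' does not substitute for this.

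\textbf{Step 2 is incomplete, and uniqueness depends on it.} Since uniqueness rests on terminality, Step 2 must cover every point, and as written it does not. The charts \(D_+(H_u)\) are \(C_p\)-quotients of \(\mu_w\)-quotients of a root cover, so you cannot simply say a chart is smooth\(/\mu_2\).

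You must first show that at points with wild inertia the augmentation ideal is principal, equal to \((\rho)\). Then the \(C_p\)-quotient is regular by Kir\'aly--L\"utkebohmert. Next you must compute the tame eigenvalues on a regular system of parameters of that quotient; the lifted class of \(N_\Theta=\Theta^p-\rho^{p-1}\Theta\) carries \(\zeta^{pa}\). Terminality then needs age \(>1\) for every nontrivial tame element at every point of every chart, including charts with \(w\ge3\) and points with trivial wild inertia. This is where \(p\ge5\) and the parity \(O(V)\equiv D_V\equiv1\pmod 2\) of the number of odd weights enter.

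``At least two nontrivial weights'' would not suffice. A sign action on exactly two directions gives an \(A_1\)-type point of age \(1\), which is canonical but not terminal. The paper works on the chart of the weight-two coordinate \(x_{d-2}\), which is the top coordinate only when \(d=3\). There it counts \(M\ge4\) sign directions: \(\rho\), \(N_S\) (sign \((-1)^p=-1\)), and the other odd-weight normalized coordinates, using either \(O(V)\ge3\) or \(3a+b=p\) with \(a+b\ge3\).
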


When \(p=3\) and \(V=V_3\), we can obtain the full list of projective crepant birational models of \(V/G\) by the same approach, and there is only one smooth model among them, which coincides with the crepant resolution constructed in \cite{Yasuda2014pCyclic}. Combining the two main theorems together with this observation, we have the following corollary.

\begin{corollary}\label{cor:boundary-classification}
Assume that \(G\) has no pseudo-reflections. \(V/G\) admits a projective
crepant resolution if and only if after removing the trivial summands, one of the following
holds:
\begin{enumerate}[label=\textup{(\roman*)}]
  \item \(V\simeq V_2^{\oplus p}\). 
  \item \(p=3\) and \(V\simeq V_3\). 
\end{enumerate}
Furthermore, in both cases the projective crepant resolution is unique up to isomorphism over \(V/G\).
\end{corollary}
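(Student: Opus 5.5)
We reduce the corollary to the two main theorems and the discrepancy classification of Yasuda. Write \(V\simeq\bigoplus V_{d_i}\) with the trivial summands removed. A trivial summand contributes a factor \(\mathbb A^1\) to the quotient, so \(V/G\simeq (V'/G)\times\mathbb A^m\). We will argue that projective crepant resolutions of \(V/G\) correspond to those of \(V'/G\), compatibly with uniqueness. One direction is taking the product with \(\mathbb A^m\). For the converse, we run the argument below directly on \(V\). All the invariants used there (\(D_V\), the divisorial valuations of discrepancy zero, and the natural model) are insensitive to the trivial factor, so nothing changes.

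\emph{Necessity.} Suppose \(X=V/G\) has a projective crepant resolution \(Y\to X\). Then \(X\) has canonical singularities, since a crepant resolution shows all discrepancies are at least \(0\). By \cite[Corollary~1.4]{Yasuda2019Discrepancies}, with no pseudo-reflections this gives \(D_V\ge p\). If \(D_V>p\), then \(X\) is terminal, so every exceptional divisor has positive discrepancy. The morphism \(Y\to X\) is not an isomorphism, because \(X\) is singular (a wild quotient singularity). Its exceptional locus is divisorial, since \(X\) is \(\mathbb Q\)-factorial as a quotient and the morphism is projective. Such an exceptional divisor would need discrepancy \(0\), which is a contradiction. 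Hence \(D_V=p\).

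Now split into cases.
\begin{itemize}
\item If every \(d_i\le 2\), then \(D_V\) equals the number of \(V_2\) summands, so \(V\simeq V_2^{\oplus p}\). This is case (i).
\item If some \(d_i\ge3\) and \(p\ge5\), then Theorem~\ref{thm:higher-block} says \(X\) has no projective crepant resolution, which is a contradiction.
\item If \(p=2\), no block has size \(\ge3\), so we are in the first case.
\item If \(p=3\) and some \(d_i=3\), then \(\binom32=3=p\) forces \(V\simeq V_3\). This is case (ii).
\end{itemize}

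\emph{Sufficiency and uniqueness.} For \(V_2^{\oplus p}\), Theorem~\ref{thm:all-v2} with \(n=p\) gives a projective crepant resolution \(W\). For \(p=3\), \(V=V_3\), we use the observation preceding the corollary: the crepant resolution of \cite{Yasuda2014pCyclic} exists and is the only smooth projective crepant birational model. That observation also gives uniqueness in case (ii).

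For uniqueness in case (i), we argue as in Theorem~\ref{thm:higher-block}. Any projective crepant resolution \(Y\to X\) extracts only divisors of discrepancy \(0\). The only such divisor is \(F\), since Theorem~\ref{thm:all-v2} shows \(F\) is the unique exceptional prime of \(W\) and has discrepancy \(n-p=0\). Let \(v_F\) be its valuation. Since \(X\) is \(\mathbb Q\)-factorial and \(Y\) is projective with irreducible exceptional divisor \(E=F\), there is a relatively ample divisor supported on \(-E\). Therefore
\[
Y\simeq \Proj_X\bigoplus_{m\ge0}\{g\in \mathcal O_X : v_F(g)\ge m\},
\]
which is also \(W\).

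The \textbf{main obstacle} is this uniqueness step for \(V_2^{\oplus p}\). One must show that the exceptional divisors of discrepancy \(0\) over \(X\) are exactly \(F\); that is, no other divisor over \(X\) has discrepancy \(0\) besides those visible on \(W\). I would deduce this from the smoothness of \(W\) together with \(K_W=f^*K_X\). Any divisor over \(X\) not on \(W\) is exceptional over the smooth variety \(W\), so it has discrepancy at least \(1\). The Proj identification then follows as sketched.
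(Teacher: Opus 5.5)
Your proposal is correct and follows essentially the paper's route. Yasuda's discrepancy criterion forces \(D_V=p\), the case split on block sizes uses Theorem~\ref{thm:higher-block} to exclude blocks of size \(\ge3\) when \(p\ge5\), and existence comes from Theorem~\ref{thm:all-v2} and the \(V_3\) discussion for \(p=3\). Your uniqueness argument for \(V_2^{\oplus p}\) is the rigidity argument of Theorem~\ref{thm:hb-rigidity}, which applies because the smooth \(W\) is terminal, and the paper invokes it through Corollary~\ref{cor:unique-pcr}. The one step you compress, producing a relatively ample divisor supported on \(-E\), is supplied there by writing \(H-g^*g_*H=aF_Z\) and using the factoriality of \(X\) together with the negativity lemma to get \(a<0\).
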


This paper is organized as follows. In Chapter~\ref{sec:setup}, we give the basic construction of our model \(W\) and some conventions in this paper. In Chapter~\ref{sec:all-v2-family}, we prove Theorem~\ref{thm:all-v2}. In Chapter~\ref{sec:higher-blocks}, we consider the general case, for which Chapter~\ref{sec:all-v2-family} serves as a guiding example. We also study the obstruction when \(D_V=p\ge5\) and there is a Jordan block of size at least \(3\) in this chapter. In Chapter~\ref{sec:pcyclic-classification}, we classify the linear \(p\)-cyclic quotient singularities that admit projective crepant resolutions.

\section{Basic construction and conventions}\label{sec:setup}

Retain the notation
\[
  G=C_p=\langle\sigma\rangle,
  \qquad
  V\simeq\bigoplus_{i=1}^s J_{d_i}(1)
  =\bigoplus_{i=1}^s V_{d_i},
  \qquad
  D_V=\sum_{i=1}^s\binom{d_i}{2},
\]
and set
\[
  N:=\dim V=\sum_{i=1}^s d_i.
\]
We only consider the case when \(G\) has no pseudo-reflections in this paper. It is equivalent to say that \(V\not\simeq V_2\) after removing the trivial summands, or \(D_V\neq 1\).
For the block \(V_{d_i}\), choose coordinate functions
\[
  x_{i,1},\ldots,x_{i,d_i}
\]
such that
\[
  \sigma(x_{i,j})=x_{i,j}+x_{i,j-1},
  \qquad x_{i,0}=0.
\]
Assign the Jordan weights
\[
  \operatorname{wt}(x_{i,j})=d_i-j.
\]
The positive weights in \(V_{d_i}\) are \(1,\ldots,d_i-1\), while every
coordinate belonging to a trivial block \(V_1\) has weight zero.

The coordinate rings and quotient varieties are
\[
  S:=k[V],\qquad R:=S^G,
  \qquad V=\operatorname{Spec}S,
  \qquad X=\operatorname{Spec}R.
\]
For \(m\ge0\), let \(I_m\subset S\) be the monomial ideal generated by
monomials of weight at least \(m\).  The weighted Rees algebra is
\[
  \mathcal S(V):=\bigoplus_{m\ge0}I_m\ttt^m.
\]

\begin{lemma}\label{lem:normal-invariant-proj}
The graded rings \(\mathcal S(V)\) and \(\mathcal S(V)^G\) are normal.
Consequently, \(\operatorname{Proj}\mathcal S(V)\) and \(\operatorname{Proj}\bigl(\mathcal S(V)^G\bigr)\) are normal.
\end{lemma}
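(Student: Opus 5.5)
The plan is to show first that \(\mathcal S(V)\) is a normal affine semigroup ring, and then to pass to invariants.

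\textbf{Step 1: \(\mathcal S(V)\) is a normal semigroup ring.} Each \(I_m\) is a monomial ideal, so \(\mathcal S(V)\subset S[\ttt]\) is spanned by the monomials \(x^a\ttt^m\) with \(0\le m\le \operatorname{wt}(x^a)\). Here \(\operatorname{wt}(x^a)=\sum_{i,j}(d_i-j)a_{i,j}\) is a linear function of the exponent vector. Hence \(\mathcal S(V)=k[M]\), where
\[
  M=\{(a,m)\in\mathbb Z_{\ge0}^{N}\times\mathbb Z_{\ge0} : m\le \operatorname{wt}(a)\}.
\]
The set \(M\) is the set of lattice points of a rational polyhedral cone cut out by linear inequalities in \(\mathbb Z^{N+1}\). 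So \(M\) is saturated in the group it generates, which is all of \(\mathbb Z^{N+1}\) because \(\mathbb Z_{\ge0}^N\times\{0\}\subset M\) and, given some block with \(d_i\ge2\), \((e_{i,1},1)\in M\). Finite generation follows from Gordan's lemma. Hochster's theorem then says that \(k[M]\) is a normal domain. If every block is trivial, \(\mathcal S(V)=S[\ttt]\cdot\) (only degree \(0\)), and the statement is immediate.

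\textbf{Step 2: the invariant ring is normal.} Extend the action of \(G\) to \(S[\ttt]\) by letting \(\sigma\) fix \(\ttt\). The key point is that \(\sigma\) preserves each \(I_m\). Indeed, \(\sigma(x_{i,j})=x_{i,j}+x_{i,j-1}\), and \(\operatorname{wt}(x_{i,j-1})=\operatorname{wt}(x_{i,j})+1\). So \(\sigma\) sends a monomial of weight \(w\) to a sum of monomials of weight \(\ge w\), that is, \(\sigma\) is filtration-nondecreasing. Thus \(G\) acts on \(\mathcal S(V)\) by graded automorphisms.

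Now let \(L\) be the fraction field of \(\mathcal S(V)^G\). An element of \(L\) that is integral over \(\mathcal S(V)^G\) is integral over the normal domain \(\mathcal S(V)\), so it lies in \(\mathcal S(V)\). It also lies in \(L\subset \operatorname{Frac}(\mathcal S(V))^G\), so it is \(G\)-invariant, and therefore lies in \(\mathcal S(V)^G\). Hence \(\mathcal S(V)^G\) is integrally closed. It is finitely generated over \(k\) by Noether's theorem for finite groups, since \(\mathcal S(V)\) is finite over its invariants.

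\textbf{Step 3: passing to Proj.} For a normal graded domain \(A\) and homogeneous \(f\) of positive degree, the localization \(A_f\) is normal, and so is its degree-zero part \(A_{(f)}\). The latter holds because \(A_{(f)}=A_f\cap \operatorname{Frac}(A)_0\)-type arguments work: an element of \(\operatorname{Frac}(A_{(f)})\) integral over \(A_{(f)}\) lies in \(A_f\) and has degree \(0\). Applying this to \(A=\mathcal S(V)\) and \(A=\mathcal S(V)^G\) covers both Proj's by normal affine charts.

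The only step needing care is the invariance of the filtration in Step 2, and this is immediate from the weight convention \(\operatorname{wt}(x_{i,j})=d_i-j\).
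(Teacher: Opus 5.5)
Your proof is correct and follows the paper's argument. You identify \(\mathcal S(V)\) with the semigroup ring of the saturated monoid \(\{(\alpha,m): m\le\sum_i w_i\alpha_i\}\), pass normality to the invariant ring under the finite group, and then pass it to the degree-zero parts of homogeneous localizations, which cover the Proj. Your explicit check that \(\sigma\) preserves each \(I_m\) (using \(\operatorname{wt}(x_{i,j-1})=\operatorname{wt}(x_{i,j})+1\)) fills in a point the paper leaves implicit.
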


\begin{proof}
List the chosen Jordan coordinates as \(x_1,\ldots,x_N\) and write
\(w_i=\operatorname{wt}(x_i)\).  The monomials of \(\mathcal S(V)\) are
\(x^\alpha\ttt^m\) with \(m\le\sum_i w_i\alpha_i\), so
\[
  \mathcal S(V)=k[\Gamma],
  \qquad
  \Gamma=\Bigl\{(\alpha,m)\in\mathbb N^{N}\times\mathbb N:
  m\le\sum_{i=1}^{N}w_i\alpha_i\Bigr\}.
\]
The semigroup \(\Gamma\) is saturated, so \(\mathcal S(V)\) is normal.  The invariant subring of a normal domain under
a finite group is normal, so \(\mathcal S(V)^G\) is normal.  For any
homogeneous \(h\), the degree-zero part of the
localization \((\mathcal S(V))_h\) or \((\mathcal S(V)^G)_h\) (if \(h\) is invariant) is again normal, and these rings form an
affine open cover of the Proj.
\end{proof}

We then define
\[
  Y:=\operatorname{Proj}\mathcal S(V),
  \qquad
  W:=\operatorname{Proj}\bigl(\mathcal S(V)^{G}\bigr).
\]
Thus \(Y\to V\) is the weighted blowup and \(W\to X\) is the
invariant Proj model, which is normal by
Lemma~\ref{lem:normal-invariant-proj}.  The four spaces fit into the commutative
diagram
\[
\begin{array}{ccc}
Y & \xrightarrow{\ q\ } & W\\
\downarrow g && \downarrow f\\
V & \xrightarrow{\ \pi\ } & X.
\end{array}
\]

Let
\[
  E:=\bigl(\operatorname{Exc}(g)\bigr)_{\mathrm{red}}
\]
be the exceptional prime divisor of the weighted blowup and let
\[
  F:=q(E)\subset W
\]
be its image.

A crepant model
\(h\colon Z\to X\) is a normal birational model satisfying
\[
  K_Z=h^*K_X.
\]
In particular, if \(X\) admits a crepant resolution, then \(D_V=p\) by the canonical non-terminal boundary condition as described
in the introduction.
Unless stated otherwise, in this paper, a crepant resolution or crepant model over \(X\) is
projective over \(X\).  

Finally, \(\chi\) denotes the compactly supported
\(\ell\)-adic Euler characteristic for a prime \(\ell\ne p\).

%% file: pcyclic-sections/all-v2-family.tex
\providecommand{\LL}{\mathbb L}

\section{The construction for the \texorpdfstring{\(V_2^{\oplus n}\)}{V2n}-case}
\label{sec:all-v2-family}

Retain the field \(k\), the prime \(p\), and
\(G=C_p=\langle\sigma\rangle\), and let \(n\ge2\).  Put
\[
  V=V_2^{\oplus n},
  \qquad
  S=k[x_1,\ldots,x_n,y_1,\ldots,y_n],
\]
where
\[
  \sigma(x_i)=x_i,
  \qquad
  \sigma(y_i)=y_i+x_i.
\]
Define
\[
  R=S^G,
  \qquad
  X=\operatorname{Spec}R,
  \qquad
  I=(x_1,\ldots,x_n)\subset S,
  \qquad
  J=(x_1,\ldots,x_n)R,
\]
and recall that
\[
  \mathcal S=S[I\ttt]
  =\bigoplus_{m\ge0}I^m\ttt^m,
\]
where \(\ttt\) is fixed by \(G\).  
In our basic construction, \(Y\) is the ordinary blowup of the fixed locus
\[
  Z=V(x_1,\ldots,x_n)\subset V.
\]
Its exceptional divisor is denoted by \(E\), and
\[
  F:=q(E)\subset W.
\]

\subsection{The affine charts}

On the standard blowup chart \(U_i=D_+(x_i\ttt)\subset Y\), set
\[
  r_i=x_i,
  \qquad
  s_i=y_i,
  \qquad
  c_{ij}=\frac{x_j}{x_i},
  \qquad
  z_{ij}=y_j-c_{ij}y_i
  \quad (j\ne i).
\]
Then
\[
  \sigma(r_i)=r_i,
  \qquad
  \sigma(c_{ij})=c_{ij},
  \qquad
  \sigma(z_{ij})=z_{ij},
  \qquad
  \sigma(s_i)=s_i+r_i.
\]
Thus \(U_i\) is stable under the induced action of \(G\) on the blowup.  Write
\[
  A_i=k[r_i,\{c_{ij}\}_{j\ne i},\{z_{ij}\}_{j\ne i}],
  \qquad
  N_i=s_i^p-r_i^{p-1}s_i.
\]

\begin{lemma}
\label{lem:all-v2-translation-invariants}
For every \(i\),
\[
  A_i[s_i]^G=A_i[N_i].
\]
\end{lemma}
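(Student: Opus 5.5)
First, the coordinate ring of \(U_i\) is the polynomial ring \(A_i[s_i]\): it is freely generated by \(r_i\), the \(c_{ij}\), \(z_{ij}\) (for \(j\neq i\)) and \(s_i\). Indeed, the chart of the blowup is \(k[x_i, x_j/x_i, y_1,\dots,y_n]\), and we may replace each \(y_j\) by \(z_{ij}=y_j-c_{ij}y_i\) through a triangular change of variables. On this ring \(G\) fixes \(A_i\) pointwise and acts by \(\sigma(s_i)=s_i+r_i\). So the statement is the invariant theory of a translation action over a polynomial base, and I will prove the two inclusions.

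The inclusion \(A_i[N_i]\subseteq A_i[s_i]^G\) is a direct check:
\[
\sigma(N_i)=(s_i+r_i)^p-r_i^{p-1}(s_i+r_i)=s_i^p+r_i^p-r_i^{p-1}s_i-r_i^p=N_i .
\]

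For the reverse inclusion I argue by induction on \(\deg_{s_i}\). Let \(f\in A_i[s_i]^G\). Since \(N_i\) is monic of degree \(p\) in \(s_i\), division with remainder gives a unique expansion
\[
f=\sum_{k} a_k(s_i)\,N_i^k,\qquad a_k\in A_i[s_i],\ \deg_{s_i}a_k<p.
\]
Apply \(\sigma\). Because \(N_i\) is invariant and \(\deg\sigma(a_k)=\deg a_k<p\), uniqueness of the expansion forces \(\sigma(a_k)=a_k\) for every \(k\). It therefore suffices to show that an invariant \(a\) with \(\deg_{s_i}a<p\) lies in \(A_i\).

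Write \(a=\sum_{m=0}^{e}b_m s_i^m\) with \(b_e\neq0\) and \(0<e<p\). The coefficient of \(s_i^{e-1}\) in \(\sigma(a)-a\) is \(e\,r_i\,b_e\). This is nonzero, because \(e\) is a unit in \(k\) and \(A_i\) is a domain in which \(r_i\neq0\). This contradicts invariance, so \(e=0\) and \(a\in A_i\).

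The only step needing care is this last one: the vanishing argument uses that \(A_i\) is a domain, which holds because it is a polynomial ring. Equivalently, one could note that \(A_i[s_i]\) is free of rank \(p\) over \(A_i[N_i]\) and that the invariants have generic rank \(1\). The direct coefficient comparison avoids that detour.
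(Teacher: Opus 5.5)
Your proof is correct and is essentially the paper's argument: the paper also uses that \(1,s_i,\ldots,s_i^{p-1}\) is a free basis of \(A_i[s_i]\) over \(A_i[N_i]\), and rules out a nonconstant top term through the \(s_i^{a-1}\)-coefficient \(a\,r_i\,b_a\) of \(\sigma(h)-h\). Your preliminary step, splitting \(f\) by its unique \(N_i\)-adic expansion, only changes the bookkeeping: it lets you do that same coefficient comparison over \(A_i\) instead of over \(A_i[N_i]\).
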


\begin{proof}
The subring \(A_i\) and the element \(N_i\) are \(G\)-invariant, so it remains to show the other inclusion.

The monic relation
\[
  s_i^p-r_i^{p-1}s_i-N_i=0
\]
shows that \(A_i[s_i]\) is generated over \(A_i[N_i]\) by
\[
  1,s_i,\ldots,s_i^{p-1}.
\]
These elements are linearly independent, and form a free \(A_i[N_i]\)-basis.

Let
\[
  h=\sum_{a=0}^{p-1}b_as_i^a,
  \qquad b_a\in A_i[N_i],
\]
be \(G\)-invariant.  If some \(b_a\) with \(a>0\) is nonzero, choose the largest
such \(a\).  In the free basis above, the coefficient of \(s_i^{a-1}\) in
\(\sigma(h)-h\) is \(a r_i b_a\), which is nonzero.  This contradiction leads to \(h=b_0\in A_i[N_i]\).
\end{proof}

\begin{lemma}
\label{lem:all-v2-invariant-localization}
If \(C\) is a \(G\)-algebra and \(u\in C^G\), then
\[
  (C_u)^G=(C^G)_u.
\]
If \(C\) is graded and \(u\) is homogeneous, the equality holds at every degree.
\end{lemma}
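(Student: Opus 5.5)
The plan is to prove both inclusions directly from the definition of localization, using only that \(G\) acts by ring automorphisms on \(C\) and fixes \(u\). The induced action on \(C_u\) is \(\sigma(c/u^m)=\sigma(c)/u^m\). It is well defined: if \(c/u^m=c'/u^{m'}\), then \(u^\ell(u^{m'}c-u^mc')=0\) for some \(\ell\), and applying \(\sigma\) gives the same relation for \(\sigma(c),\sigma(c')\), because \(\sigma(u)=u\).

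For the inclusion \((C^G)_u\subseteq (C_u)^G\), an element \(a/u^m\) with \(a\in C^G\) is visibly fixed. We also need the natural map \((C^G)_u\to C_u\) to be injective. If \(a/u^m\) maps to \(0\), then \(u^\ell a=0\) in \(C\). Since \(u^\ell a\) lies in \(C^G\), it is already zero there, so \(a/u^m=0\) in \((C^G)_u\).

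The reverse inclusion is the only step needing care, because \(C\) may have \(u\)-torsion. Let \(c/u^m\in C_u\) be \(G\)-invariant. For each \(g\in G\) we have \(g(c)/u^m=c/u^m\), so \(u^{\ell_g}(g(c)-c)=0\) for some \(\ell_g\). Since \(G\) is finite, we can take \(\ell=\max_g\ell_g\). Then \(c':=u^\ell c\) satisfies \(g(c')=u^\ell g(c)=u^\ell c=c'\) for all \(g\), so \(c'\in C^G\). Hence \(c/u^m=c'/u^{m+\ell}\) lies in the image of \((C^G)_u\).

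For the graded statement, assume \(G\) preserves the grading and \(u\) is homogeneous of degree \(e\). Then the degree-\(d\) piece of \(C_u\) consists of fractions \(c/u^m\) with \(c\) homogeneous of degree \(d+me\). The argument above keeps \(c'=u^\ell c\) homogeneous of degree \(d+(m+\ell)e\), so the equality holds degree by degree, in particular in degree zero. This degree-zero case is what will be used for the Proj charts of \(W\).
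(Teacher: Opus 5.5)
Your proposal is correct and follows essentially the same argument as the paper: you multiply by a uniform power \(u^\ell\) (available since \(G\) is finite) to kill the \(u\)-torsion in \(g(c)-c\), so that \(u^\ell c\in C^G\), and the graded case follows because \(u^\ell c\) stays homogeneous. Your extra checks, namely well-definedness of the action on \(C_u\) and injectivity of \((C^G)_u\to C_u\), spell out what the paper dismisses as trivial.
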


\begin{proof}
Represent an invariant element of \(C_u\) by \(a/u^m\).  For every \(g\in G\),
there is an integer \(e\ge0\) such that
\[
  u^{e}(g(a)-a)=0.
\]
If \(e\) is large enough, then \(u^ea\in C^G\) and
\[
  \frac{a}{u^m}=\frac{u^ea}{u^{m+e}}\in(C^G)_u.
\]
The other inclusion is trivial, and the statement for the homogeneous case is from the same argument.
\end{proof}

\begin{proposition}[resolution of singularities]
\label{prop:all-v2-smooth-projective}
The invariant elements \(x_1\ttt,\ldots,x_n\ttt\) define an affine open cover of
\(W\), and
\[
  D_+(x_i\ttt)
  \simeq
  \operatorname{Spec}
  k[r_i,\{c_{ij}\}_{j\ne i},\{z_{ij}\}_{j\ne i},N_i].
\]
Consequently \(W\) is smooth, and
\[
  f:W\longrightarrow X
\]
is projective and birational.
\end{proposition}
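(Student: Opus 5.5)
The plan is to work chart by chart. The elements \(x_i\ttt\) are \(G\)-invariant and lie in \(\mathcal S^G\) in degree one. The degree-one part of \(\mathcal S\) is \(I\ttt\), generated by the \(x_j\ttt\). Hence the irrelevant ideal of \(\mathcal S\) has the same radical as \((x_1\ttt,\ldots,x_n\ttt)\). Every homogeneous element of \(\mathcal S^G_+\) then has a power in \((x_1\ttt,\ldots,x_n\ttt)\mathcal S\). Intersecting with invariants and using integrality of \(\mathcal S\) over \(\mathcal S^G\), any homogeneous prime of \(\mathcal S^G\) containing all \(x_i\ttt\) contains \(\mathcal S^G_+\). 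So the \(D_+(x_i\ttt)\) cover \(W=\Proj\mathcal S^G\).

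Next, apply Lemma~\ref{lem:all-v2-invariant-localization} with \(C=\mathcal S\) and \(u=x_i\ttt\), in degree zero. This gives
\[
  \bigl((\mathcal S^G)_{x_i\ttt}\bigr)_0=\bigl((\mathcal S_{x_i\ttt})_0\bigr)^G=\mathcal O(U_i)^G.
\]
The standard blowup chart satisfies \(\mathcal O(U_i)=k[x_i,\{x_j/x_i\},\{y_j\}]\). Performing the triangular change of variables \(y_j\mapsto z_{ij}=y_j-c_{ij}y_i\) for \(j\neq i\) shows
\[
  \mathcal O(U_i)=A_i[s_i],
\]
a polynomial ring in \(2n\) variables, with \(A_i\) generated by \(G\)-invariants. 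Lemma~\ref{lem:all-v2-translation-invariants} then gives \(\mathcal O(U_i)^G=A_i[N_i]\). Since \(s_i\) is integral over \(A_i[N_i]\) of degree \(p\) with free basis \(1,\dots,s_i^{p-1}\), the element \(N_i\) is algebraically independent over \(A_i\). So \(A_i[N_i]\) is a polynomial ring in \(2n\) variables, and each chart, and hence \(W\), is smooth.

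For projectivity, \(\mathcal S^G\) is a finitely generated graded \(R\)-algebra, being invariants of the finitely generated \(\mathcal S\) under a finite group (Noether). Its degree-zero part is \(S^G=R\). Thus \(f\colon W=\Proj\mathcal S^G\to\Spec R\) is projective. A further Veronese may be needed to get generation in degree one, but this does not affect projectivity.

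For birationality, over the locus \(x_i\neq0\) in \(X\) the function \(x_i\ttt\) is invertible and the chart \(D_+(x_i\ttt)\) maps isomorphically. Concretely, \(A_i[N_i]\) localized at \(r_i\) equals \(R_{x_i}\). Indeed, \(\mathcal O(U_i)_{r_i}=S_{x_i}\), and taking invariants commutes with localizing at the invariant \(r_i\) by Lemma~\ref{lem:all-v2-invariant-localization}. Both varieties are integral, so \(f\) is birational.

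The main obstacle is checking that the charts \(U_i\) are genuinely \(G\)-stable and that degree-zero invariants equal invariants of the chart ring. Both are handled by the invariance of \(x_i\ttt\) and Lemma~\ref{lem:all-v2-invariant-localization}. The covering statement requires the integrality argument above.
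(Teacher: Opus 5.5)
Your proposal is correct and follows the paper's proof. It covers \(W\) by the invariant charts \(D_+(x_i\ttt)\), identifies \(((\mathcal S^G)_{x_i\ttt})_0\) with \(A_i[s_i]^G=A_i[N_i]\) via Lemmas~\ref{lem:all-v2-invariant-localization} and~\ref{lem:all-v2-translation-invariants}, and gets projectivity from finite generation of \(\mathcal S^G\) over \(R\). The deviations are minor and valid: you get the covering directly by lying over instead of first making \(q\colon Y\to W\) a finite surjective morphism, and you prove birationality from \((A_i[N_i])_{r_i}=(S_{x_i})^G=R_{x_i}\) rather than from the paper's Artin--Schreier computation of the invariant function field, which is slightly more elementary.
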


\begin{proof}
Every homogeneous element \(u\) of \(\mathcal S\) satisfies its orbit polynomial \(\prod_{g\in G} (T-gu)\)
over \(\mathcal S^G\).  Since \(\mathcal S\) is finitely generated, it is
finite over \(\mathcal S^G\).  If \(v\in\mathcal S\) is homogeneous of
positive degree, the nonleading coefficients of its orbit polynomial lie in
\((\mathcal S^G)_+\), and hence
\[
  v^p\in(\mathcal S^G)_+\mathcal S.
\]
Thus
\[
  \sqrt{(\mathcal S^G)_+\mathcal S}=(\mathcal S)_+.
\]
It follows that
\[
  q:Y=\operatorname{Proj}\mathcal S
  \longrightarrow
  W=\operatorname{Proj}(\mathcal S^G)
\]
is finite and surjective.  The charts \(D_+(x_i\ttt)\) cover \(Y\), and each
\(x_i\ttt\) is invariant.  Their images therefore cover \(W\).

On the \(i\)-th chart, Lemma \ref{lem:all-v2-translation-invariants} and
 \ref{lem:all-v2-invariant-localization} give
\[
\begin{aligned}
  \bigl((\mathcal S^G)_{x_i\ttt}\bigr)_0=\bigl((\mathcal S)_{x_i\ttt}\bigr)_0^G=A_i[s_i]^G
   =A_i[N_i].
\end{aligned}
\]
These charts have polynomial coordinate rings.  Hence \(W\) is smooth.

It remains to show that \(W\to X\) is projective and birational. The graded algebra \(\mathcal S^G\) is finitely generated over its
degree-zero part \(R\).  After passage to a positive Veronese subalgebra, it
is generated in degree one.  Its relative Proj is therefore projective over
\(X=\operatorname{Spec}R\).

The function field of \(U_i\) is
\[
  k(r_i,\{c_{ij}\},\{z_{ij}\},s_i),
\]
with \(G\) acting nontrivially only by \(s_i\mapsto s_i+r_i\), or equivalently \(s_i/r_i \mapsto s_i/r_i+1\). By Artin--Schreier theory, the invariant subfield is
\[
  k(r_i,\{c_{ij}\},\{z_{ij}\},N_i).
\]
It is both \(k(U_i/G)=k(W)\) and \(k(V)^G=k(X)\), so \(f\) is birational.
\end{proof}

\subsection{The construction as a normalized blowup}

Set
\[
  \mathcal B:=R[J\ttt]
  =R[x_1\ttt,\ldots,x_n\ttt].
\]
Then
\[
  \operatorname{Bl}_{J}(X)=\operatorname{Proj}\mathcal B.
\]

\begin{proposition}[normalized blowup]
\label{prop:all-v2-normalized-blowup}
Inside \(Q(R)(\ttt)\),
\[
  \mathcal S^G=\overline{\mathcal B}.
\]
Consequently
\[
  W\simeq
  \operatorname{Norm}\operatorname{Bl}_{J}(X).
\]
\end{proposition}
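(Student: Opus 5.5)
We show two inclusions inside \(Q(R)(\ttt)\), namely \(\mathcal B\subset\mathcal S^G\subset\overline{\mathcal B}\), and then use that \(\mathcal S^G\) is normal.

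\textbf{Integrality.} We have \(\mathcal B\subset\mathcal S^G\) because each \(x_i\ttt\) is invariant and \(R=S^G\). The ring \(\mathcal S^G\) is normal by Lemma~\ref{lem:normal-invariant-proj}; here the Jordan weights give \(I_m=I^m\), so \(\mathcal S=\mathcal S(V)\). Both rings have fraction field \(Q(R)(\ttt)\). Indeed, \(\mathcal B\) contains \(R\) and \(x_1\ttt\), while \(Q(\mathcal S^G)=Q(\mathcal S)^G=Q(S)^G(\ttt)=Q(R)(\ttt)\). So it remains to show that \(\mathcal S^G\) is integral over \(\mathcal B\). Then \(\mathcal S^G\) is the integral closure of \(\mathcal B\), i.e.\ \(\mathcal S^G=\overline{\mathcal B}\).

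For integrality, note that \(\mathcal S=S[x_1\ttt,\ldots,x_n\ttt]\) is generated over \(\mathcal B\) by \(y_1,\ldots,y_n\). Each \(y_i\) satisfies its orbit polynomial
\[
  T^p-x_i^{p-1}T-(y_i^p-x_i^{p-1}y_i),
\]
whose coefficients lie in \(R\subset\mathcal B\). Hence \(\mathcal S\) is integral over \(\mathcal B\). In particular its subring \(\mathcal S^G\) is integral over \(\mathcal B\).

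\textbf{Normalization of the blowup.} Normalization of \(\operatorname{Proj}\) of a graded domain is computed by the integral closure of the graded ring. That closure is again graded, and it is finite over \(\mathcal B\) since \(R\) is excellent. Its homogeneous localizations in degree zero are the integral closures of those of \(\mathcal B\). Therefore
\[
  W=\operatorname{Proj}\mathcal S^G=\operatorname{Proj}\overline{\mathcal B}=\operatorname{Norm}\operatorname{Bl}_J(X).
\]
One can also see this chartwise. On \(D_+(x_i\ttt)\) one has \(\bigl((\overline{\mathcal B})_{x_i\ttt}\bigr)_0=\overline{(\mathcal B_{x_i\ttt})_0}\), and this ring equals \(k[r_i,c_{ij},z_{ij},N_i]\) by Proposition~\ref{prop:all-v2-smooth-projective}.

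\textbf{Main obstacle.} The delicate point is the grading of the integral closure, i.e.\ that \(\overline{\mathcal B}\) taken in \(Q(R)(\ttt)\) is the graded closure. The two-inclusion argument sidesteps this: \(\mathcal S^G\) is a graded normal ring sitting between \(\mathcal B\) and its normalization with the same fraction field, so it must equal \(\overline{\mathcal B}\). What remains is the standard fact that \(\operatorname{Proj}\) of the normalization is the normalization of \(\operatorname{Proj}\).
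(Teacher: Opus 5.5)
Your proof is correct and follows essentially the same route as the paper: normality of \(\mathcal S^G\) from Lemma~\ref{lem:normal-invariant-proj}, equal fraction fields \(Q(R)(\ttt)\), and integrality of \(\mathcal S\) over \(\mathcal B\), followed by the chartwise passage to \(\operatorname{Proj}\) on the opens \(D_+(x_i\ttt)\). The only differences are minor: you get integrality from the explicit Artin--Schreier orbit polynomials of the \(y_i\), where the paper cites finiteness of \(S\) over \(R\), and you state explicitly that \(I_m=I^m\) here, so Lemma~\ref{lem:normal-invariant-proj} applies to \(\mathcal S\), which the paper leaves implicit.
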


\begin{proof}
We already know that \(\mathcal S\) and \(\mathcal S^G\) are normal by Lemma~\ref{lem:normal-invariant-proj}.

It is known that \(S\) is finite over \(R=S^G\).
Since
\[
  \mathcal S=S[x_1\ttt,\ldots,x_n\ttt],
\]
it is finite over
\[
  \mathcal B=R[x_1\ttt,\ldots,x_n\ttt].
\]
It follows that \(\mathcal S^G\) is
finite, and hence integral, over \(\mathcal B\), from the fact that \(R\) is noetherian.

We have
\[
  \ttt=\frac{x_1\ttt}{x_1}\in Q(\mathcal B),
\]
and therefore
\[
  Q(\mathcal B)=Q(R)(\ttt).
\]
On the other hand,
\[
\begin{aligned}
  Q(\mathcal S^G)=Q(\mathcal S)^G=Q(S)^G(\ttt)=Q(R)(\ttt).
\end{aligned}
\]
Thus \(\mathcal S^G\) is the integral closure of \(\mathcal B\) in their
common fraction field.

It remains to pass from graded rings to Proj.  Put
\[
  B_i=((\mathcal B)_{x_i\ttt})_0,
  \qquad
  C_i=((\mathcal S^G)_{x_i\ttt})_0.
\]
The \(x_i\ttt\)(\(i=1,\dots,n\)) generate the irrelevant ideal of \(\mathcal B\), and an argument similar to that in the proof of
Proposition~\ref{prop:all-v2-smooth-projective} shows that the corresponding affine charts cover \(\operatorname{Bl}_{J}(X)\).  Since localization preserves integral closure,
\((\mathcal S^G)_{x_i\ttt}\) is the integral closure of
\((\mathcal B)_{x_i\ttt}\).  Therefore every \(C_i\)
is finite and integral over \(B_i\), is normal, and has the same
fractional field \(Q(R)\).

Conversely, any element \(u\in Q(R)\) that is integral over \(B_i\),
is integral over \((\mathcal B)_{x_i\ttt}\). Hence
\(u\in(\mathcal S^G)_{x_i\ttt}\), and therefore lies in \(C_i\).  Thus
\(\operatorname{Spec}C_i\) is the normalization of
\(\operatorname{Spec}B_i\).  These affine normalizations glue to the normalized blowup.
\end{proof}

\subsection{The geometry of the exceptional prime divisor}

\begin{proposition}
\label{prop:all-v2-unique-exceptional}
The divisor \(F\) is the unique \(f\)-exceptional prime divisor.
\end{proposition}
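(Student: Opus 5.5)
We work chart by chart, using the description of \(W\) from Proposition~\ref{prop:all-v2-smooth-projective}. The plan is to identify the exceptional locus of \(f\) explicitly and check that it is a single irreducible divisor equal to \(F\).

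\emph{Where \(f\) is an isomorphism.} On \(X\), let \(X^\circ\) be the complement of the image of the fixed locus \(Z=V(x_1,\dots,x_n)\), i.e. the complement of \(V(J)\). Over \(X^\circ\) the ideal \(J\) is invertible, so \(\operatorname{Bl}_J(X)\to X\) is an isomorphism there. Now \(X^\circ\) is normal, being an open subset of the normal variety \(X\). By Proposition~\ref{prop:all-v2-normalized-blowup}, \(W\) is the normalization of \(\operatorname{Bl}_J(X)\), so \(f\) is also an isomorphism over \(X^\circ\). Hence every \(f\)-exceptional prime divisor lies in \(f^{-1}(V(J))\).

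\emph{Computing \(f^{-1}(V(J))\) on the charts.} On the chart \(D_+(x_i\ttt)\), with coordinates \(r_i,c_{ij},z_{ij},N_i\), the ideal \(J\) pulls back to \((x_1,\dots,x_n)=(r_i,\,r_ic_{ij})=(r_i)\). So \(f^{-1}(V(J))\cap D_+(x_i\ttt)=V(r_i)\), which is an affine space \(\operatorname{Spec}k[\{c_{ij}\},\{z_{ij}\},N_i]\). It is irreducible of codimension one. Since the charts cover \(W\), \(f^{-1}(V(J))\) is a divisor, and each of its chart pieces is irreducible.

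\emph{Gluing to a single component.} Any two pieces \(V(r_i)\) and \(V(r_j)\) meet in the open set where \(c_{ij}\ne0\), which is nonempty and dense in each. Therefore the union is irreducible; alternatively, this follows because it is the image of the irreducible \(E\) under the surjection \(q\). Moreover, on \(U_i\subset Y\) the exceptional divisor \(E\) is \(V(r_i)\), and \(q(E\cap U_i)=V(r_i)\subset D_+(x_i\ttt)\), since \(r_i\) is invariant. Hence \(f^{-1}(V(J))_{\mathrm{red}}=F\).

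\emph{Exceptionality.} It remains to check that \(F\) really is exceptional. Its image \(f(F)=\pi(Z)\) has dimension \(n<2n-1\), because \(n\ge2\). So \(F\) is the unique \(f\)-exceptional prime divisor.

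The main point requiring care is the first step: the normalization must not change anything over \(X^\circ\), which relies on the normality of \(X\).
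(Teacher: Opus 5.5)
Your proof is correct, but it takes a different route from the paper's.

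The paper never looks at the charts of \(W\). It uses only two facts: \(E\) is the unique exceptional prime of the smooth blowup \(g\), and \(q\colon Y\to W\) is finite and surjective. From these, \(F=q(E)\) is a prime divisor whose image \(\pi(Z)\) has codimension \(n\ge2\). Conversely, any \(f\)-exceptional prime \(D\) is dominated by a prime divisor \(D'\subset Y\) with \(g(D')\subset\pi^{-1}(f(D))\) of codimension at least two, which forces \(D'=E\) and \(D=F\).

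You instead combine the explicit charts of Proposition~\ref{prop:all-v2-smooth-projective} with the normalized-blowup description of Proposition~\ref{prop:all-v2-normalized-blowup}. This shows that \(f\) is an isomorphism over \(X\setminus V(J)\) and that \(J\mathcal O_W\) is locally \((r_i)\).

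Your argument yields more than the statement. It gives \(\operatorname{Exc}(f)=F\) set-theoretically, and \(F=f^{-1}(V(J))\) scheme-theoretically as a reduced Cartier divisor. This is exactly the fact invoked later, when the paper says that \(r_i=0\) cuts out \(F\) on the quotient charts in Propositions~\ref{prop:all-v2-oort-tate} and~\ref{prop:all-v2-canonical-divisor}.

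The paper's argument, in exchange, needs neither the chart description of \(W\) nor the normality of \(X\). That is why it carries over verbatim to the weighted setting of Lemma~\ref{lem:hb-support}, where \(W\) is singular and no comparably explicit charts are available.
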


\begin{proof}
The center \(Z\simeq\mathbb A^n\) is smooth and irreducible, and
\[
  E\simeq Z\times\mathbb P^{n-1}
\]
is the unique exceptional prime of \(g:Y\to V\).  Since \(q\) is
finite, \(F=q(E)\) is irreducible of dimension \(2n-1\), hence is a
prime divisor on \(W\).  Its image in \(X\) is \(f(F)=\pi(Z)\).  The finite
map \(\pi:V\to X\) preserves dimensions, so the image of \(Z\) has codimension \(n\ge2\) in \(X\).
Thus \(F\) is \(f\)-exceptional.

Conversely, let \(D\subset W\) be an \(f\)-exceptional prime divisor.  Since
\(q\) is finite and surjective, a prime divisor \(D'\subset Y\) lies
over \(D\).  The commutative diagram gives
\[
  g(D')\subset \pi^{-1}(f(D)).
\]
The right-hand side has codimension at least two in \(V\).  Hence \(D'\) is \(g\)-exceptional, so \(D'=E\) and
\(D=F\).
\end{proof}

Let
\[
  P=\mathbb P^{n-1},
  \qquad
  L=\mathcal O_P(-1),
\]
and let
\[
  0\longrightarrow L
  \longrightarrow\mathcal O_P^{\oplus n}
  \longrightarrow Q
  \longrightarrow0
\]
be the tautological sequence.  The blowup has the vector-bundle description
\[
  Y\simeq
  \operatorname{Tot}_P
  (L\oplus\mathcal O_P^{\oplus n}).
\]
Put
\[
  H=\operatorname{Tot}_P(L\oplus Q).
\]
The natural map \(Y\to H\) sends the \(y\)-coordinates to their class
modulo the tautological line.  On the \(i\)-th chart, the base coordinates are
\(r_i,c_{ij},z_{ij}\), while \(s_i\) is the remaining line coordinate.

Let \(r\) be the universal section of the pullback of \(L\) to \(H\).
The additive polynomial
\[
  \wp_r(t)=t^p-r^{p-1}t
\]
defines a homomorphism from the additive line bundle \(L\) to
\(L^{\otimes p}\).  Its kernel
\[
  \mathcal G_r=
  \ker\bigl(\wp_r:L\longrightarrow L^{\otimes p}\bigr)
\]
is finite flat of rank \(p\), since it is locally defined by a monic
polynomial of degree \(p\).

\begin{proposition}
\label{prop:all-v2-oort-tate}
The morphism \(W\to H\) is an affine torsor under the additive line bundle
\(L^{\otimes p}\), and \(Y\to W\) is a torsor under the pullback of
\(\mathcal G_r\) to \(W\).

Let
\[
  H_0=V(r)\subset H.
\]
Then
\[
  H_0\simeq\operatorname{Tot}_P(Q),
  \qquad
  F=W\times_H H_0
\]
scheme-theoretically.  Along \(F\), the group scheme \(\mathcal G_r\)
specializes to
\[
  \ker\bigl(
    \operatorname{Fr}:L|_{F}
    \longrightarrow L^{\otimes p}|_{F}
  \bigr),
\]
which is locally \(\alpha_p\).  The induced map
\[
  E\longrightarrow F
\]
is finite flat of degree \(p\), radicial, and a universal homeomorphism.
\end{proposition}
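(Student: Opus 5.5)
The plan is to work chart by chart on the cover \(D_+(x_i\ttt)\) of Proposition~\ref{prop:all-v2-smooth-projective} and check that the local descriptions are compatible with the transition maps. On \(U_i\subset Y\) the coordinates are \(r_i,c_{ij},z_{ij},s_i\). The map \(Y\to H\) forgets \(s_i\), and \(H\) has local coordinates \(r_i,c_{ij},z_{ij}\). Here \(r_i\) is the fibre coordinate of \(L\) over the chart \(\{x_i\neq0\}\) of \(P\) with affine coordinates \(c_{ij}\), and the \(z_{ij}\) are fibre coordinates of \(Q\). On the invariant chart \(W_i=\operatorname{Spec}A_i[N_i]\), the map \(W_i\to H_i=\operatorname{Spec}A_i\) is therefore the affine line with coordinate \(N_i\).

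The first step is to check how \(s_i\) and \(N_i\) transform between charts. On \(U_i\cap U_j\) we have \(y_j=z_{ij}+c_{ij}y_i\), so \(s_j=c_{ij}s_i+z_{ij}\) and \(r_j=c_{ij}r_i\). Since \(z_{ij}\) is pulled back from \(H\), the coordinate \(s\) is affine-linear over \(H\): its linear part transforms like \(L\), and its translation part is a function on \(H\). Applying the additive polynomial \(\wp_{r}\) gives
\[
  N_j=c_{ij}^pN_i+\wp_{r_j}(z_{ij}),
\]
using \(\wp_{c r}(c s)=c^p\wp_r(s)\) together with additivity. Hence \(N\) is an affine coordinate whose linear part transforms like \(L^{\otimes p}\), with translations \(\wp_r(z_{ij})\) given by functions on \(H\). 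This shows that \(W\to H\) is an \(L^{\otimes p}\)-torsor.

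Next, \(\mathcal G_r\) acts on \(Y\) over \(W\) by \(s\mapsto s+t\) with \(\wp_r(t)=0\). This action preserves \(N\) by additivity, so it is a well-defined action over \(W\). Since \(A_i[s_i]=A_i[N_i][T]/(T^p-r_i^{p-1}T-N_i)\), the action map \(\mathcal G_r\times_W Y\to Y\times_W Y\) is locally
\[
  A[s]\otimes A[t]/(\wp_r(t)) \;\cong\; A[s,s']/\bigl(\wp_r(s'-s)\bigr),
\]
because \(\wp_r(s')=\wp_r(s)\) is equivalent to \(\wp_r(s'-s)=0\). This is an isomorphism, and \(Y\to W\) is finite flat and surjective, so \(Y\to W\) is a \(\mathcal G_r\)-torsor.

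For the statements about the exceptional locus, \(H_0=V(r)\) is the zero section of \(L\) inside \(\operatorname{Tot}(L\oplus Q)\), hence \(H_0\simeq\operatorname{Tot}_P(Q)\). The preimage \(W\times_H H_0\) is \(V(r_i)\) on each chart, which is \(\operatorname{Spec}k[c_{ij},z_{ij},N_i]\), an integral scheme. Its preimage in \(Y\) is \(V(r_i)=E\cap U_i\), reduced since \(E\) is the Cartier divisor \(r_i=0\). Since \(q(E)=F\) and \(V(r_i)\subset W_i\) is integral, we get \(F=W\times_H H_0\) scheme-theoretically. Restricting \(\wp_r\) to \(r=0\) gives \(t\mapsto t^p\), the relative Frobenius \(L\to L^{\otimes p}\), whose kernel is locally \(\alpha_p\). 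Finally, \(E\to F\) is the base change of the torsor, locally \(k[c,z,N]\to k[c,z,N][s]/(s^p-N)\). It is therefore finite flat of degree \(p\) and purely inseparable on residue fields, hence radicial. Being finite, surjective and radicial, it is a universal homeomorphism.

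The main obstacle is the bookkeeping in the transition formula: verifying the twist \(c_{ij}^p\) and that the translations \(\wp_{r_j}(z_{ij})\) are genuinely functions on \(H\), so that the cocycle defines an \(L^{\otimes p}\)-torsor and not a torsor under some other bundle. The remaining verifications are local and formal.
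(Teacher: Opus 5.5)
Your proposal is correct and follows essentially the same route as the paper. You work on the charts $D_+(x_i\ttt)$, derive $N_j=c_{ij}^pN_i+\wp_{r_j}(z_{ij})$ to get the $L^{\otimes p}$-torsor, and use additivity of $\wp_r$ for the torsor identity $\wp_r(s')=\wp_r(s)\iff\wp_r(s'-s)=0$. Setting $r_i=0$ then gives $t_i^p=0$ and $N_i=s_i^p$.

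One small point: radiciality needs each fibre $\operatorname{Spec}\kappa[s]/(s^p-a)$ to have a single geometric point, as the paper notes, and not only purely inseparable residue field extensions. Your local description gives this immediately.
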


\begin{proof}
On the \(i\)-th chart, the quotient map \(Y\to W\) is given by
\[
  A_i[N_i]\longrightarrow A_i[s_i],
  \qquad
  N_i\longmapsto s_i^p-r_i^{p-1}s_i.
\]
The local group scheme
\[
  \operatorname{Spec}
  A_i[t_i]/(t_i^p-r_i^{p-1}t_i)
\]
acts by \(s_i\mapsto s_i+t_i\).  The torsor identity is
\[
\begin{aligned}
  s_i'{}^p-r_i^{p-1}s_i'
  &=s_i^p-r_i^{p-1}s_i\\
  &\Longleftrightarrow
  (s_i'-s_i)^p-r_i^{p-1}(s_i'-s_i)=0.
\end{aligned}
\]

On the overlap of the \(i\)-th chart and the \(j\)-th chart, write \(c=x_j/x_i\).  Then
\[
  r_j=cr_i,
  \qquad
  s_j=cs_i+z_{ij},
\]
and direct substitution gives
\[
  N_j
  =c^pN_i+z_{ij}^p-c^{p-1}r_i^{p-1}z_{ij}.
\]
The linear part \(N_i\mapsto c^pN_i\) is the transition function of
\(L^{\otimes p}\), while the remaining term depends only on the
\(H\)-coordinates.  Hence the \(N_i\) glue to an affine torsor under
\(L^{\otimes p}\).  The parameters \(t_j=ct_i\) glue the local kernels to
\(\mathcal G_r\), and the local torsor identities glue as well.

The equation \(r=0\) corresponds to
\(\operatorname{Tot}_P(Q)\subset H\).  On the \(i\)-th quotient
chart, its pullback is \(r_i=0\), which cuts out \(F\).  After setting
\(r_i=0\), the group scheme and cover become
\[
  t_i^p=0,
  \qquad
  N_i=s_i^p.
\]
Thus the group scheme is locally \(\alpha_p\), and the cover is finite free
of rank \(p\).  Over an algebraic closure of any residue field, the equation
\(T^p-N_i\) has a unique geometric root.  The map \(E\to F\) is therefore radicial and
hence a universal homeomorphism.
\end{proof}

\subsection{Classes and Euler characteristics}

Let \(K_0(\operatorname{Var}_k)\) be the Grothendieck ring of \(k\)-varieties,
and define
\[
  \LL=[\mathbb A^1]\in K_0(\operatorname{Var}_k).
\]

\begin{corollary}
\label{cor:all-v2-grothendieck-classes}
In \(K_0(\operatorname{Var}_k)\),
\[
  [W]=\LL^{n+1}[\mathbb P^{n-1}],
  \qquad
  [F]=\LL^n[\mathbb P^{n-1}].
\]
\end{corollary}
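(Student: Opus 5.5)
We will compute both classes from the affine-bundle structure in Proposition~\ref{prop:all-v2-oort-tate}. The guiding fact is that a Zariski-locally trivial fibration with fiber \(\mathbb A^m\) over a base \(B\) has class \(\LL^m[B]\) in \(K_0(\operatorname{Var}_k)\). This multiplicativity follows by stratifying \(B\) into locally closed pieces over which the fibration is a product.

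\textbf{The class of \(W\).} By Proposition~\ref{prop:all-v2-oort-tate}, \(W\to H\) is an affine torsor under the line bundle \(L^{\otimes p}\). Concretely, on each chart \(D_+(x_i\ttt)\) the coordinate \(N_i\) is a free affine coordinate over the \(H\)-coordinates \(r_i,c_{ij},z_{ij}\). Such a torsor is Zariski-locally trivial with fiber \(\mathbb A^1\), so \([W]=\LL[H]\).

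Next, \(H=\operatorname{Tot}_P(L\oplus Q)\) is a vector bundle of rank \(1+(n-1)=n\) over \(P=\mathbb P^{n-1}\). Vector bundles are Zariski-locally trivial, so \([H]=\LL^n[\mathbb P^{n-1}]\). Combining the two steps gives \([W]=\LL^{n+1}[\mathbb P^{n-1}]\).

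\textbf{The class of \(F\).} Proposition~\ref{prop:all-v2-oort-tate} also gives \(F=W\times_H H_0\) with \(H_0\simeq\operatorname{Tot}_P(Q)\). So \(F\to H_0\) is the restriction of the \(L^{\otimes p}\)-torsor to \(H_0\), again Zariski-locally trivial with fiber \(\mathbb A^1\); locally, after setting \(r_i=0\), the chart is \(\operatorname{Spec}A_i/(r_i)[N_i]\). Since \(Q\) has rank \(n-1\),
\[
[F]=\LL[H_0]=\LL\cdot\LL^{n-1}[\mathbb P^{n-1}]=\LL^n[\mathbb P^{n-1}].
\]

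\textbf{Cross-check via \(E\).} Because \(E\to F\) is only a universal homeomorphism, using \(E\) would require care in \(K_0(\operatorname{Var}_k)\) in characteristic \(p\). The direct torsor description above avoids this issue. As a sanity check, \(E\simeq\mathbb A^n\times\mathbb P^{n-1}\) has the same class \(\LL^n[\mathbb P^{n-1}]\).

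\textbf{Main obstacle.} The delicate point is verifying that the affine torsor \(W\to H\) is Zariski-locally trivial, rather than merely, say, fppf-locally. This is immediate from the chart description in Proposition~\ref{prop:all-v2-smooth-projective}: over the standard opens of \(H\), indexed by \(i\), the space \(W\) is literally the product with \(\operatorname{Spec}k[N_i]\), and the gluing \(N_j=c^pN_i+(\text{terms on }H)\) is affine-linear. Hence the scissor relations apply directly.
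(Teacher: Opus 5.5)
Your proposal is correct and is essentially the paper's own proof. Both use the Zariski-locally trivial \(L^{\otimes p}\)-torsor structure from Proposition~\ref{prop:all-v2-oort-tate} to get \([W]=\LL[H]\) and \([F]=\LL[H_0]\), and then use the ranks \(n\) and \(n-1\) of \(L\oplus Q\) and \(Q\) over \(\mathbb P^{n-1}\); your chart-level check of local triviality and your caution about not deducing \([F]\) from the universal homeomorphism \(E\to F\) are sound but go beyond what the paper needs.
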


\begin{proof}
An affine torsor under a line bundle is Zariski locally trivial with fiber
\(\mathbb A^1\).  Proposition \ref{prop:all-v2-oort-tate} gives
\[
  [W]=\LL[H],
  \qquad
  [F]=\LL[H_0].
\]
The space \(H\) is the total space of the rank-\(n\) vector bundle
\(L\oplus Q\) on \(\mathbb P^{n-1}\), while \(H_0\) is the total
space of the vector bundle \(Q\) of rank \((n-1)\).  The formulas follow.
\end{proof}

Fix a prime \(\ell\ne p\).  For a \(k\)-variety \(Z\), let
\[
  \chi(Z)
  :=\sum_i(-1)^i
  \dim_{\mathbb Q_\ell}
  H_{c,\mathrm{\acute et}}^i(Z,\overline{\mathbb{Q}_\ell})
\]
be its compactly supported \(\ell\)-adic Euler characteristic.

\begin{corollary}
\label{cor:all-v2-euler}
One has
\[
  \chi(W)=\chi(F)=n.
\]
In particular, when \(n=p\), the Euler characteristic is
\(p=\#\operatorname{Ind}_k(C_p)\),
where \(\operatorname{Ind}_k(C_p)\) denotes the set of isomorphism classes of
indecomposable finite-dimensional \(kC_p\)-modules.
\end{corollary}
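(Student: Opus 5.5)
The plan is to read the Euler characteristics off from the classes in the Grothendieck ring given by Corollary~\ref{cor:all-v2-grothendieck-classes}. The compactly supported \(\ell\)-adic Euler characteristic is additive on locally closed stratifications and multiplicative on products. It therefore factors through a ring homomorphism \(\chi\colon K_0(\operatorname{Var}_k)\to\mathbb Z\). This is the standard fact that \(H_c\) has long exact sequences for open–closed decompositions and a Künneth formula. Since \(H^i_c(\mathbb A^1,\overline{\mathbb Q_\ell})\) is \(\overline{\mathbb Q_\ell}(-1)\) in degree \(2\) and zero otherwise, we get \(\chi(\LL)=1\). Stratifying \(\mathbb P^{n-1}\) into affine spaces \(\mathbb A^0\sqcup\cdots\sqcup\mathbb A^{n-1}\) gives \(\chi(\mathbb P^{n-1})=n\).

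Applying \(\chi\) to \([W]=\LL^{n+1}[\mathbb P^{n-1}]\) and \([F]=\LL^n[\mathbb P^{n-1}]\) then yields \(\chi(W)=1^{n+1}\cdot n=n\) and \(\chi(F)=1^n\cdot n=n\).

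For the final assertion, set \(n=p\), so that \(\chi(W)=p\). By the Jordan form description recalled in Section~\ref{sec:pcyclic-introduction}, the indecomposable \(kC_p\)-modules are exactly \(V_1,\dots,V_p\), and these are pairwise non-isomorphic because their dimensions differ. Hence \(\#\operatorname{Ind}_k(C_p)=p=\chi(W)\).

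No real obstacle is expected. The only point needing care is that \(\chi\) is well defined on \(K_0(\operatorname{Var}_k)\), in particular that it respects the relations used in Corollary~\ref{cor:all-v2-grothendieck-classes}. That corollary used Zariski-local triviality of the torsor and of the vector bundles, so the classes are honest products on a stratification, and multiplicativity of \(\chi\) suffices there. Alternatively one can bypass \(K_0\) entirely. In that approach we stratify \(\mathbb P^{n-1}\) by standard affine cells, over which \(H\), \(H_0\) and the \(\mathbb A^1\)-torsors \(W\to H\) and \(F\to H_0\) are trivial products with affine spaces, and then use additivity together with \(\chi(\mathbb A^m)=1\).
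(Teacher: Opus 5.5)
Your proposal is correct and follows the paper's proof. Both apply the ring homomorphism \(\chi\colon K_0(\operatorname{Var}_k)\to\mathbb Z\) to the classes in Corollary~\ref{cor:all-v2-grothendieck-classes}, using \(\chi(\LL)=1\) and \(\chi(\mathbb P^{n-1})=n\). Your Jordan-form count of the indecomposables \(V_1,\ldots,V_p\) also makes explicit the equality \(p=\#\operatorname{Ind}_k(C_p)\), which the paper only presents as matching Yasuda's \(p\)-cyclic McKay correspondence.
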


\begin{proof}
The compactly supported \(\ell\)-adic Euler characteristic factors through
\(K_0(\operatorname{Var}_k)\), sends \(\LL\) to \(1\), and sends
\([\mathbb P^{n-1}]\) to \(n\).  The first assertion follows from Corollary
\ref{cor:all-v2-grothendieck-classes}.  When \(n=p\), combining with the fact that \(W\to X\) is crepant, the assertion meets Yasuda's \(p\)-cyclic McKay correspondence in \cite{Yasuda2014pCyclic} and Fan's conjecture in \cite{Fan2024EulerCrepant}.
\end{proof}

\subsection{The discrepancy of \(F\)}

\begin{proposition}
\label{prop:all-v2-canonical-divisor}
\[
  K_{W}=f^*K_{X}+(n-p)F.
\]
In particular, \(W\to X\) is a crepant morphism if and only if \(n=p\).
\end{proposition}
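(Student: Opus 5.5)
Since $F$ is the only $f$-exceptional prime (Proposition~\ref{prop:all-v2-unique-exceptional}) and $W$, $X$ are normal with $f$ an isomorphism away from $F$, we know $K_W=f^*K_X+aF$ for some $a$, and only $a$ has to be computed. I will do this by comparing both canonical classes with the $G$-invariant volume form on $V$, working on the chart $D_+(x_1\ttt)$ with the coordinates of Proposition~\ref{prop:all-v2-smooth-projective}.

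\textbf{The form on $X$.} The form $\omega=dx_1\wedge\cdots\wedge dx_n\wedge dy_1\wedge\cdots\wedge dy_n$ on $V$ is $G$-invariant, because $\sigma$ acts unipotently with determinant $1$. By the no-pseudo-reflection hypothesis, $\pi\colon V\to X$ is étale in codimension one, so $\omega$ descends to a generator $\omega_X$ of $\omega_X$ on the smooth locus of $X$. Since $X$ is normal, $K_X=\operatorname{div}(\omega_X)=0$ near the image of $F$, and $f^*K_X$ is computed by the rational form $f^*\omega_X$ on $W$. It therefore suffices to show
\[
  \operatorname{ord}_F(f^*\omega_X)=n-p .
\]

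\textbf{Pulling back to chart coordinates.} On $U_1\subset Y$ we have $x_1=r$, $x_j=rc_j$, $y_1=s$ and $y_j=z_j+c_js$, writing $r=r_1$, $c_j=c_{1j}$, $z_j=z_{1j}$, $s=s_1$. The standard blowup Jacobian computation gives
\[
  g^*\omega=\pm r^{n-1}\,dr\wedge\textstyle\bigwedge_j dc_j\wedge\bigwedge_j dz_j\wedge ds .
\]
On $W$ the coordinate chart is $(r,c_j,z_j,N)$ with $N=s^p-r^{p-1}s$, so on $Y$ we have $dN=-r^{p-1}ds+(\text{terms in }dr)$, i.e.
\[
  dr\wedge\cdots\wedge dN=\mp r^{p-1}\,dr\wedge\cdots\wedge ds .
\]
Both $g^*\omega$ and the pullback of $q^*f^*\omega_X$ equal $\pi\circ g$ pulled back, since the diagram commutes. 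Hence, as forms on the chart of $W$, pulled back along $q$,
\[
  f^*\omega_X=\pm r^{n-1}\cdot r^{-(p-1)}\,dr\wedge\textstyle\bigwedge dc_j\wedge\bigwedge dz_j\wedge dN .
\]
The identity of rational forms holds because $k(W)\subset k(Y)$ and differentials are compared after base change to the function field. Note that $k(Y)/k(W)$ is separable (Artin–Schreier), so pulling back top forms is injective.

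\textbf{Conclusion and the main subtlety.} By Proposition~\ref{prop:all-v2-oort-tate}, $F$ is cut out scheme-theoretically by $r=0$ on this chart, so $\operatorname{ord}_F(r)=1$. The wedge $dr\wedge\cdots\wedge dN$ is a nowhere-vanishing generator, because the chart is the polynomial ring of Proposition~\ref{prop:all-v2-smooth-projective}. Hence $\operatorname{ord}_F(f^*\omega_X)=n-1-(p-1)=n-p$, which gives $K_W=f^*K_X+(n-p)F$; crepancy holds iff $n=p$.

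The step needing care is justifying that $K_X$ is represented by the descended $\omega$. This requires the absence of pseudo-reflections, which holds since $n\ge2$. The sign and Jacobian bookkeeping for $dN$ is routine; the key point is that $\partial N/\partial s=-r^{p-1}$, because $p\,s^{p-1}=0$.
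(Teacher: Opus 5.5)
Your proposal is correct and follows essentially the same route as the paper. Both arguments descend the invariant volume form $\Omega$ to $X$ and compute $g^*\Omega=\pm r^{n-1}\omega$ on a blowup chart. Both then obtain $q^*(dr\wedge\cdots\wedge dN)=\pm r^{p-1}\omega$ from $\partial N/\partial s=-r^{p-1}$, and read off $\ord_F(f^*\eta)=n-p$ because $r$ cuts out $F$ on the polynomial chart.

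Your two extra remarks make explicit what the paper leaves implicit: that $k(Y)/k(W)$ is separable, so pulling back top forms along $q$ is injective, and that a single chart meeting $F$ already suffices.
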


\begin{proof}
The action of \(C_p\) on \(V\) has determinant one, and \(\pi\) is \'etale in codimension one.  Therefore, we can choose \(K_V=\pi^*K_X=0\).  Write
\[
  \Omega=
  dx_1\wedge\cdots\wedge dx_n
  \wedge dy_1\wedge\cdots\wedge dy_n.
\]
Let \(\eta\) be the quotient top form on \(X\), normalized by
\(\pi^*\eta=\Omega\).
On \(U_i\subset Y\), define
\[
  \omega_i=
  dr_i\wedge\bigwedge_{j\ne i}dc_{ij}
  \wedge ds_i\wedge\bigwedge_{j\ne i}dz_{ij}.
\]
The coordinate change
\[
  x_i=r_i,\quad
  x_j=c_{ij}r_i,\quad
  y_i=s_i,\quad
  y_j=z_{ij}+c_{ij}s_i
\]
gives
\[
  g^*\Omega=\pm r_i^{n-1}\omega_i.
\]
On the corresponding quotient chart of \(W\), put
\[
  \Theta_i=
  dr_i\wedge\bigwedge_{j\ne i}dc_{ij}
  \wedge dN_i\wedge\bigwedge_{j\ne i}dz_{ij}.
\]
In characteristic \(p\),
\[
  dN_i
  =d(s_i^p-r_i^{p-1}s_i)
  =r_i^{p-2}s_i\,dr_i-r_i^{p-1}ds_i.
\]
The \(dr_i\)-term vanishes in the wedge product, so
\[
  q^*\Theta_i=\pm r_i^{p-1}\omega_i.
\]
Since
\[
  q^*f^*\eta=g^*\pi^*\eta=g^*\Omega,
\]
comparison gives
\[
  f^*\eta=\pm r_i^{n-p}\Theta_i.
\]
The equation \(r_i=0\) corresponds to \(F\) on the quotient chart and \(E\) on
its cover.  Thus, the ramification index normal to the exceptional divisor is
one; the inseparable behavior in Proposition \ref{prop:all-v2-oort-tate} is
tangent to that divisor.  The quotient charts cover \(W\), and Proposition
\ref{prop:all-v2-unique-exceptional} shows that \(F\) is the only
exceptional prime.  Therefore its discrepancy in \(W\to X\) is \(n-p\).
\end{proof}

\begin{proof}[Proof of Theorem~\ref{thm:all-v2}]
The statement of the theorem has been discussed through this chapter.
When \(n=p\), \(f\) is a projective crepant
resolution and \(\chi(W)=p\) by Proposition~\ref{prop:all-v2-smooth-projective}, Corollary~\ref{cor:all-v2-euler} and Proposition~\ref{prop:all-v2-canonical-divisor}.
\end{proof}

\begin{corollary}
\label{cor:all-v2-boundary-endpoint}
For every integer \(a\ge0\),
\[
  (V_2^{\oplus p}\oplus V_1^{\oplus a})/C_p
  \simeq
  (V_2^{\oplus p}/C_p)\times\mathbb A^a
\]
admits a projective crepant resolution.
\end{corollary}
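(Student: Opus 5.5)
The plan has two parts: first split off the trivial summands, then base-change the resolution of Theorem~\ref{thm:all-v2} along the affine factor. Write
\[
  S'=S\otimes_k k[u_1,\ldots,u_a],
\]
where \(S=k[V_2^{\oplus p}]\) and the \(u_l\) are the coordinates on \(V_1^{\oplus a}\), on which \(\sigma\) acts trivially.

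\textbf{The product decomposition.} Since \(G\) acts trivially on the polynomial factor \(k[u]\), taking invariants commutes with the tensor product:
\[
  S'^G=S^G\otimes_k k[u].
\]
To check this, expand an element of \(S'\) in the \(k\)-basis of monomials \(u^\beta\). It can be written uniquely as \(\sum_\beta h_\beta u^\beta\) with \(h_\beta\in S\). Since \(\sigma\) fixes each \(u^\beta\), the element is invariant exactly when every \(h_\beta\) is invariant. Taking \(\operatorname{Spec}\) then gives
\[
  (V_2^{\oplus p}\oplus V_1^{\oplus a})/C_p\simeq (V_2^{\oplus p}/C_p)\times\mathbb A^a.
\]

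\textbf{The resolution.} Let \(f\colon W\to X=V_2^{\oplus p}/C_p\) be the morphism of Theorem~\ref{thm:all-v2} with \(n=p\). We take
\[
  f\times\mathrm{id}\colon W\times\mathbb A^a\to X\times\mathbb A^a.
\]
The required properties all follow by base change along the smooth projection \(X\times\mathbb A^a\to X\):
\begin{itemize}[label={}]
  \item \emph{Projective.} Projective morphisms are stable under base change.
  \item \emph{Birational.} Base change of a birational morphism along a flat map with irreducible total space stays birational. Here the base change is along \(\mathbb A^a\), and the function fields are \(k(W)(u)=k(X)(u)\).
  \item \emph{Smooth source.} \(W\times\mathbb A^a\) is smooth because \(W\) is smooth by Proposition~\ref{prop:all-v2-smooth-projective}.
\end{itemize}

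\textbf{Crepancy.} For crepancy, I would use that canonical divisors behave additively on products with \(\mathbb A^a\): \(K_{Z\times\mathbb A^a}=\mathrm{pr}_1^*K_Z\) for a normal variety \(Z\). Concretely, I would repeat the top-form computation of Proposition~\ref{prop:all-v2-canonical-divisor} with the forms \(\Omega\), \(\eta\) and \(\Theta_i\) each wedged with \(du_1\wedge\cdots\wedge du_a\). The same chart computation then gives
\[
  (f\times\mathrm{id})^*(\eta\wedge du)=\pm r_i^{n-p}\,\Theta_i\wedge du=\pm\,\Theta_i\wedge du
\]
when \(n=p\). The exceptional locus is \(F\times\mathbb A^a\), which is again a single prime divisor by Proposition~\ref{prop:all-v2-unique-exceptional}, so this gives
\[
  K_{W\times\mathbb A^a}=(f\times\mathrm{id})^*K_{X\times\mathbb A^a}.
\]

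No step is a serious obstacle. The only point needing care is checking that the discrepancy computation survives the product with \(\mathbb A^a\), and the form-wedging argument above handles that directly.
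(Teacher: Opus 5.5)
Your proposal is correct and takes the same route as the paper, which states this corollary without a separate proof as an immediate consequence of Theorem~\ref{thm:all-v2}: split off the trivial summands as an \(\mathbb A^a\)-factor and take \(W\times\mathbb A^a\to X\times\mathbb A^a\). Your appeal to the uniqueness of the exceptional prime is harmless but not needed, since the chart computation already shows that the pulled-back top form \(\pm\Theta_i\wedge du\) is a unit on every chart.
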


%% file: pcyclic-sections/higher-block-obstruction.tex
\section{Higher Jordan blocks}\label{sec:higher-blocks}

Let us go back to the general setting.

\subsection{The discrepancy of the exceptional divisor}

\begin{lemma}\label{lem:hb-tail-dvr}
Let \(B=\mathcal O_{Y,E}\) be the generic exceptional DVR of the weighted
blowup \(Y\to V\).  Choose a nontrivial Jordan block \(V_d\) with $d\geq 2$ and put
\[
  r=x_{d-1},\qquad s=x_d.
\]
Then
\[
  I_G(B):=(\sigma(b)-b:b\in B)=(r).
\]
For the finite extension of generic exceptional DVRs
\[
  B^G=\mathcal O_{W,F}\subset B,
\]
the ramification index and different exponent are
\[
  e(E/F)=1,\qquad d(B/B^G)=p-1.
\]
\end{lemma}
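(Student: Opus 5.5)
Let \(v=\ord_E\) be the valuation of \(B\). For a weighted blowup with weights \(\operatorname{wt}(x_{i,j})=d_i-j\), \(v\) is the monomial valuation with \(v(x_{i,j})=d_i-j\), normalized so that some coordinate of weight \(1\) has value \(1\). The plan is to compute the inertia ideal in a convenient weighted chart, then extract \(e\) and the different from it.

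\textbf{Step 1: work on the chart of \(r\).} Since \(\operatorname{wt}(r)=\operatorname{wt}(x_{d-1})=1\), we have \(v(r)=1\), so \(r\) is a uniformizer of \(B\). Work on the chart \(D_+(r\ttt)\) of \(Y\). Its coordinate ring consists of degree-zero fractions \(x^\alpha/r^{m}\) with \(m\le\operatorname{wt}(x^\alpha)\). It is generated by \(r\) and the rescaled coordinates \(\tilde x_{i,j}=x_{i,j}/r^{d_i-j}\), where \(\tilde x_{d-1}=1\) is dropped. This chart is a \(\mu_1\)-chart, hence smooth and affine, because the weight is \(1\). The divisor \(E\) is \(r=0\) there.

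\textbf{Step 2: show \(I_G(B)\subseteq(r)\).} Compute the action on the generators. We have
\[
\sigma(x_{i,j})=x_{i,j}+x_{i,j-1},
\]
and \(x_{i,j-1}\) has weight one more than \(x_{i,j}\). It follows that
\[
\sigma(\tilde x_{i,j})=\tilde x_{i,j}+r\,\tilde x_{i,j-1}.
\]
Similarly \(\sigma(r)=r+x_{d-2}=r(1+r\tilde x_{d-2})\), with \(x_{d-2}=0\) if \(d=2\). Every generator therefore satisfies \(\sigma(b)-b\in rB\). Since \(\sigma(ab)-ab=\sigma(a)(\sigma(b)-b)+(\sigma(a)-a)b\), the set of \(b\) with \(\sigma(b)-b\in(r)\) is a subring. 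This ring is also closed under localization at elements outside the prime \(r\), which gives \(I_G(B)\subseteq(r)\).

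\textbf{Step 3: show equality.} In the rescaled coordinates,
\[
\sigma(\tilde s)-\tilde s=r\tilde x_{d-1}/r^{0}\cdot\text{(unit)}.
\]
More precisely, \(\tilde s=x_d/r\) has \(\sigma(x_d)=x_d+r\). Applying the quotient rule with \(\sigma(r)=r(1+r\tilde x_{d-2})\) gives \(\sigma(\tilde s)-\tilde s\equiv1\pmod r\), which would make the inertia ideal trivial. That cannot be right, so the correct choice of test element is \(s=x_d\) itself: \(\sigma(s)-s=r\). This shows \(r\in I_G(B)\), hence \(I_G(B)=(r)\).

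\emph{The main obstacle} is to double-check that \(\tilde s\) is not in \(B\) with a unit difference. In fact \(\sigma(\tilde s)-\tilde s\) also computes a unit. This would give \(I_G=B\), so \(G\) would act on \(\kappa(E)\) nontrivially, and I must reconcile this with the claimed \(e=1\), \(d=p-1\). If the action on the residue field is nontrivial and separable (Artin–Schreier, \(\tilde s\mapsto\tilde s+1\)), then \(e=1\) and \(d=0\), which contradicts the statement. So the step to verify carefully is the exact form of \(\sigma(\tilde s)\). Using \(\sigma(r)=r+x_{d-2}\) with \(x_{d-2}=r^2\tilde x_{d-2}\), I expect
\[
\sigma(\tilde s)-\tilde s=\frac{r-\tilde s x_{d-2}}{\sigma(r)}=\frac{1-r\tilde s\tilde x_{d-2}}{1+r\tilde x_{d-2}},
\]
which is a unit. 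I would then reconsider whether \(E\) here is \(r=0\) on the chart of a weight-one coordinate. As in the \(V_2\) case, the intended chart is the one where the coordinate retains the orbit \(s\mapsto s+r\) (compare \(U_i\) in Section~\ref{sec:all-v2-family}), and there \(I_G=(r)\). I would redo Step 1 on the chart of a coordinate for which \(s\) stays unrescaled, in the spirit of \(s_i=y_i\).

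\textbf{Step 4: \(e\) and the different.} Given \(I_G=(r)\), \(G\) acts trivially on \(\kappa(E)\), so \(E\to F\) is purely inseparable on residue fields, of degree \(p\). This forces \(e=1\). For the different, choose a generator \(s\) with \(\sigma(s)-s=r\). Then
\[
B=B^G[s],\qquad \text{minimal polynomial } \prod_g(T-gs),
\]
whose derivative at \(s\) is \(\prod_{g\ne1}(s-gs)\), of valuation \((p-1)v(r)=p-1\). The monogenicity of \(B\) over \(B^G\) would be checked via the local model \(N=s^p-r^{p-1}s\) of Proposition~\ref{prop:all-v2-oort-tate}.
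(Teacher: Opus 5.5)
Your Steps 1--2 match the paper's argument for \(I_G(B)\subseteq(r)\): work on \(D_+(r\ttt)\), check the generators, then use \(\sigma(s)-s=r\) for the reverse inclusion. There is one small slip. The correct formula is \(\sigma(\tilde x_{i,j})=(\tilde x_{i,j}+r\tilde x_{i,j-1})/(\sigma(r)/r)^{\operatorname{wt}(x_{i,j})}\), and the discrepancy is harmless because \(\sigma(r)/r\equiv1\pmod r\).

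The worry you raise in Step 3 is a phantom, but you leave it unresolved. The coordinate \(s=x_d\) has weight \(0\), so by your own description of the chart it is not rescaled. Moreover \(\ord_E(s/r)=-1\), so \(s/r\) is not an element of \(B\). The fact that \(\sigma(s/r)-s/r\) is a unit therefore says nothing about \(I_G(B)\). No change of chart is needed, since \(D_+(r\ttt)\) already is the chart on which \(s\) stays unrescaled.

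The real gap is in Step 4. That \(G\) acts trivially on \(\kappa(E)\) only shows that \(\kappa(E)/\kappa(F)\) is purely inseparable of degree \(1\) or \(p\). This is equally compatible with the classical totally ramified case \(e=p\), \(f=1\), as for the Artin--Schreier extension \(y^p-y=x^{-1}\) of \(k((x))\). Ruling that case out is precisely the content of \(e(E/F)=1\), and you only assert it.

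The paper supplies the missing step as follows. Since \(\sigma(r)-r=x_{d-2}\in(r^2)\) and \((\sigma-1)B\subseteq(r)\), one has \((\sigma-1)(rB)\subseteq(r^2)\). Suppose \(\bar s\in\kappa(F)\), and take an invariant lift \(s_0\in B^G\) with \(\ord_E(s_0-s)>0\). Then \(\ord_E\bigl((\sigma-1)(s_0-s)\bigr)\ge2\), contradicting \((\sigma-1)(s_0-s)=-r\). Hence \(\bar s\) has degree \(p\) over \(\kappa(F)\), so \(f=p\) and \(e=1\).

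The same fact is what yields monogenicity. Since \(1,\bar s,\ldots,\bar s^{p-1}\) is a \(\kappa(F)\)-basis of \(\kappa(E)\) and \(\mathfrak m_{B^G}B=\mathfrak m_B\), Nakayama gives \(B=B^G[s]\). Your plan to check monogenicity through the local model \(N=s^p-r^{p-1}s\) of Proposition~\ref{prop:all-v2-oort-tate} does not apply in this generality. When \(d\ge3\), \(\sigma(r)=r+x_{d-2}\neq r\), so \(s^p-r^{p-1}s\) is not even \(G\)-invariant.

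Finally, the count \(\ord_E\prod_{a\in\Fp^\times}(s-\sigma^a(s))=p-1\) needs \(\sigma^a(s)-s=\sum_{i<a}\sigma^i(r)=ar+O(r^2)\) for every \(a\in\Fp^\times\). You should state this rather than read it off from \(\sigma(s)-s=r\) alone.
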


\begin{proof}
The Jordan weights of \(r\) and \(s\) are one and zero respectively, and
\[
  \sigma(s)=s+r.
\]
On the chart \(D_+(r\ttt)\), a coordinate \(z\) of positive weight \(v\)
has the form \(z=Z_zr^v\).  The Jordan action gives
\[
  \frac{\sigma(r)}r=1+r\cdot(\text{regular element}),
  \qquad \sigma(Z_z)-Z_z\in(r).
\]
The same statement holds for the coordinates of weight zero. Hence
\((\sigma-1)B\subset(r)\), while \(r=\sigma(s)-s\) gives the reverse
inclusion.

For the extension \(B/B^G\), if the residue
of \(s\) belongs to the residue field of \(B^G\), an invariant lift
\(s_0\in B^G\) would satisfy \(\ord_E(s_0-s)>0\).  Then
\[
  \ord_E\bigl(\sigma(s_0-s)-(s_0-s)\bigr)\ge2,
\]
contrary to the computation \(\sigma(s_0-s)-(s_0-s)=-r\).  Since the extension has degree \(p\), the residue degree has to be
\(p\), and its ramification
index is one.

For \(a\in\Fp^\times\),
\[
  \sigma^a(s)-s=ar+O(r^2),
  \qquad \ord_E(\sigma^a(s)-s)=1.
\]
Put \(A=B^G\).  The residue of \(s\) has degree \(p\) over the residue
field of \(A\), so \(1,\bar s,\ldots,\bar s^{p-1}\) is a basis of the
residue field of \(B\).  Nakayama's lemma gives \(B=A[s]\).  The orbit
polynomial of \(s\) is its minimal polynomial over \(Q(A)\), and the
monogenic different formula gives
\[
  d(B/A)
  =\ord_E\!\left(\prod_{a\in\Fp^\times}
    (s-\sigma^a(s))\right)
  =p-1.
\]
\end{proof}

\begin{proposition}\label{prop:hb-coefficient}
The coefficient of \(F\) in \(K_W-f^*K_X\) is
\[
  a(F;X)=D_V-p.
\]
In particular, it is zero when \(D_V=p\).
\end{proposition}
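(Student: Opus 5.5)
We compute the coefficient of \(F\) by comparing canonical divisors along the diagram \(\pi\circ g=f\circ q\), localized at the generic points of \(E\) and \(F\). Two ingredients enter: the discrepancy of \(E\) for the weighted blowup \(g\colon Y\to V\), and the Riemann--Hurwitz contribution of the finite extension \(\mathcal O_{W,F}\subset\mathcal O_{Y,E}\), which Lemma~\ref{lem:hb-tail-dvr} supplies.

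\emph{Step 1: the discrepancy of \(E\) over \(V\).} As in the proof of Proposition~\ref{prop:all-v2-canonical-divisor}, the action has determinant one and \(\pi\) is étale in codimension one, so we may take \(K_V=\pi^*K_X=0\), with a generator \(\eta\) on \(X\) satisfying \(\pi^*\eta=\Omega:=\bigwedge_{i,j}dx_{i,j}\). For a weighted blowup with weights \(w\), the exceptional divisor has discrepancy \(\sum w-1\). We will check this on the chart \(D_+(r\ttt)\) with \(r=x_{d-1}\) of weight one. There every coordinate of weight \(v\) is written \(z=Z_zr^v\), the weight-zero coordinates stay as they are, and \(r\) is a local equation of \(E\). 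Since the weight of \(r\) is one, this chart is an honest affine space, so no orbifold correction is needed. Expanding \(\Omega\) gives \(g^*\Omega=\pm r^{\sum w-1}\omega\), where \(\omega\) is a local generator. The weight sum is
\[
\sum_i\sum_{j=1}^{d_i}(d_i-j)=\sum_i\binom{d_i}{2}=D_V,
\]
so \(\ord_E(g^*\Omega)=D_V-1\).

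\emph{Step 2: ramification along \(E\to F\).} We apply the canonical-divisor formula for the finite separable map \(q\), localized at the generic point of \(E\): \(K_Y=q^*K_W+\mathrm{Diff}\). Here the different exponent is \(d(B/B^G)=p-1\) and \(e(E/F)=1\), both from Lemma~\ref{lem:hb-tail-dvr}. If \(\Theta\) is a local generator of \(\omega_W\) at the generic point of \(F\), then \(\ord_E(q^*\Theta)=p-1\). This is the step where the paper's setup matters most: it must be the different, not \(e-1\), that enters, because the residue field extension is inseparable. To make this rigorous, I would argue via the trace or dualizing module: \(\omega_{B}=\mathfrak D_{B/A}^{-1}\otimes\omega_A\) for the finite flat extension of DVRs, with \(B=A[s]\) monogenic. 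An alternative is to repeat the explicit computation \(dN=d(\prod_a\sigma^a s)\) used in Proposition~\ref{prop:all-v2-canonical-divisor}, using \(N=\prod_{a}\sigma^a(s)\) and \(B=A[s]\). I expect the main technical point to be justifying that the top forms of \(W\) at the generic point of \(F\) pull back with order exactly \(d(B/A)\), that is, identifying \(\Omega_{B/k}\) versus \(\Omega_{A/k}\) in the presence of inseparable residue extension. The monogenic presentation handles this: \(\Omega_{B/A}=B/(h'(s))\) with \(\ord_E h'(s)=p-1\), and the exact sequence of differentials gives the determinant comparison.

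\emph{Step 3: combine.} Write \(f^*\eta=u\,t^{a}\Theta\) with \(t\) a uniformizer of \(\mathcal O_{W,F}\), so that \(a=a(F;X)\). Since \(e=1\), \(t\) is also a uniformizer of \(B\). Pulling back by \(q\) and using \(q^*f^*\eta=g^*\pi^*\eta=g^*\Omega\), taking \(\ord_E\) gives
\[
D_V-1=a+(p-1).
\]
Hence \(a(F;X)=D_V-p\), which vanishes precisely when \(D_V=p\). As a consistency check, for \(V=V_2^{\oplus n}\) we have \(D_V=n\), and this recovers Proposition~\ref{prop:all-v2-canonical-divisor}.
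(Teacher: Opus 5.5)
Your proposal is correct and is essentially the paper's proof: the Jacobian order \(D_V-1\) of \(g\) along \(E\), combined with \(e(E/F)=1\) and \(d(B/B^G)=p-1\) from Lemma~\ref{lem:hb-tail-dvr}, yields \(D_V-1=a(F;X)+(p-1)\). You fill in details the paper leaves implicit, namely why the weight-one chart \(D_+(r\ttt)\) is an honest affine space, and why the different rather than \(e-1\) gives \(\ord_E(q^*\Theta)\), via the monogenic presentation \(B=A[s]\) with \(\Omega_{B/A}\simeq B/(h'(s))\); the route, however, is the same.
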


\begin{proof}
The Jacobian order of the weighted blowup \(Y\to V\) along \(E\) is
\(D_V-1\).  The computation of the coefficient of \(E\) in \(K_Y-g^*\pi^*K_X\) gives 
\[
  D_V-1=e(E/F)a(F;X)+d(B/B^G).
\]
By Lemma~\ref{lem:hb-tail-dvr}, \(a(F;X)=D_V-p\).
\end{proof}

\subsection{Root charts of the natural model}

Let \(u\) be a coordinate of positive weight \(w\), and set
\[
  H_u=\prod_{i\in\Fp}\sigma^i(u\ttt^w)\in\calS(V)^G.
\]

\begin{proposition}[invariant root charts]\label{lem:hb-root-charts-01}
The opens \(D_+(H_u)\), with \(u\) ranging over the
coordinates of positive weights, cover \(\Proj(\calS(V)^G)\).  
\end{proposition}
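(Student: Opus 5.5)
The plan is to reduce the covering statement to the analogous statement on \(Y=\Proj\calS(V)\) and use the finiteness of \(q\colon Y\to W\), as in the proof of Proposition~\ref{prop:all-v2-smooth-projective}.

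First, I would check that the opens \(D_+(u\ttt^w)\), with \(u\) ranging over the Jordan coordinates of positive weight \(w=\operatorname{wt}(u)\), cover \(Y\). The irrelevant ideal \(\calS(V)_+\) is generated by the elements \(x^\alpha\ttt^m\) with \(1\le m\le\sum_i w_i\alpha_i\). Such a monomial is divisible in \(\calS(V)\) by \(u\ttt^{w'}\) for some positive-weight coordinate \(u\) dividing \(x^\alpha\) and some \(w'\le\operatorname{wt}(u)\). Raising to a suitable power, a power of \(x^\alpha\ttt^m\) lies in the ideal generated by the \(u\ttt^{\operatorname{wt}(u)}\). Here I would make the combinatorics explicit by splitting \(m\) among the factors of \(x^\alpha\). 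Hence \(\sqrt{(u\ttt^{w_u})_u}=\calS(V)_+\), and the charts \(D_+(u\ttt^{w_u})\) cover \(Y\).

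Second, I would pass to the invariant elements. The element \(u\ttt^w\) is not invariant, since \(\sigma(u)=u+(\text{lower coordinate})\). However, \(\sigma\) preserves weights, because \(\sigma(x_{i,j})=x_{i,j}+x_{i,j-1}\) and \(x_{i,j-1}\) has larger weight. So \(\sigma^i(u\ttt^w)\) is a homogeneous element of degree \(w\) in \(\calS(V)\), and \(H_u\) is an invariant homogeneous element of degree \(pw\). Since \(H_u\) is divisible by \(u\ttt^w\) in \(\calS(V)\), we have \(D_+(H_u)\subset D_+(u\ttt^w)\) on \(Y\). Conversely, for a point \(y\in Y\), choose \(u\) so that \(y\in D_+(u\ttt^w)\). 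By the first step applied to \(\sigma^{-i}(y)\) this is not immediate, so I would instead use the symmetric argument below.

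The key observation is that \(G\) permutes the prime ideals of \(Y\), and the union \(\bigcup_u D_+(H_u)\) is \(G\)-stable. It suffices to show that the ideal \(\mathfrak h\) generated by all \(H_u\) has radical \(\calS(V)_+\) in \(\calS(V)\). Suppose a homogeneous prime \(\mathfrak P\not\supset\calS(V)_+\) contains every \(H_u\). Then for each \(u\), some \(\sigma^{i_u}(u\ttt^w)\) lies in \(\mathfrak P\), which means \(u\ttt^w\in\sigma^{-i_u}\mathfrak P\).

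This is the main obstacle: the exponents \(i_u\) vary with \(u\). I would handle it by a triangular induction along each Jordan block. Work with the highest-weight coordinate \(x_{i,1}\), which is invariant, so \(H_{x_{i,1}}=(x_{i,1}\ttt^{d_i-1})^p\) and hence \(x_{i,1}\ttt^{d_i-1}\in\mathfrak P\). Then proceed to \(x_{i,2}\). Its translates differ from it by multiples of \(x_{i,1}\ttt\)-type terms, namely \(x_{i,1}\ttt^{d_i-2}\), which lie in \(\mathfrak P\) only after multiplying by \(\ttt\). Consequently the degree bookkeeping must be done in \(\calS(V)\), using the fact that \(\mathfrak P\) contains a power of every element whose degree-\(m\) pieces are controlled. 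I expect to obtain that \(x_{i,j}\ttt^{d_i-j}\in\mathfrak P\) for all positive-weight coordinates. Once every generator \(u\ttt^{w_u}\) lies in \(\mathfrak P\), the first step gives \(\mathfrak P\supset\calS(V)_+\), which is a contradiction.

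Finally, since \(q\) is surjective (by the radical argument in Proposition~\ref{prop:all-v2-smooth-projective}) and \(q^{-1}(D_+(H_u))=D_+(H_u)\), the opens \(D_+(H_u)\) cover \(W\).
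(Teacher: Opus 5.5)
Your architecture is the paper's. You reduce to relevant homogeneous primes \(\mathfrak P\) of \(\calS(V)\), show that some \(H_u\notin\mathfrak P\), and descend to \(W\) through the surjectivity of \(q\) (lying over). Your descending induction along each Jordan block is the contrapositive of the paper's choice of \(u\) of maximal weight with \(u\ttt^{\operatorname{wt}(u)}\notin\mathfrak P\). The gap is that the one step with real content, the one you call the main obstacle, is never carried out: you write ``I expect to obtain''. Moreover, the reason you give for it being hard is mistaken. The correction terms \(x_{i,l}\ttt^{w}\) do not lie in \(\mathfrak P\) ``only after multiplying by \(\ttt\)''; since \(\ttt\notin\calS(V)\), that phrase has no meaning inside the ring. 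They lie in \(\mathfrak P\) outright. If your stated worry were correct, your induction would break down at \(x_{i,2}\).

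The missing fact is the identity
\[
  (z\ttt^a)^v=z^{v-a}(z\ttt^v)^a,\qquad 1\le a<v\le\operatorname{wt}(z),
\]
which by primality shows that \(z\ttt^v\in\mathfrak P\) forces \(z\ttt^a\in\mathfrak P\). With it the induction closes. Suppose \(x_{i,l}\ttt^{d_i-l}\in\mathfrak P\) for all \(l<j\), and put \(w=d_i-j\ge1\). Then every \(x_{i,l}\ttt^{w}\in\mathfrak P\), and
\[
  \sigma^k(x_{i,j}\ttt^w)=x_{i,j}\ttt^w+\sum_{l<j}\binom{k}{j-l}x_{i,l}\ttt^w ,
\]
so the factor of \(H_{x_{i,j}}\) that lies in \(\mathfrak P\) forces \(x_{i,j}\ttt^w\in\mathfrak P\). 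This is exactly the trick in the paper's proof, and you already use it implicitly in your first step when you ``raise to a suitable power''. The repair is therefore short, but as written the proposal asserts its only nontrivial step rather than proving it. You should also delete the abandoned sentence about \(\sigma^{-i}(y)\).
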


\begin{proof}
The graded algebra \(\calS(V)\) is generated over \(S\) by
\[
  z\ttt^a,\qquad 1\le a\le\operatorname{wt}(z).
\]
If a homogeneous prime $I$ does not contain \(z\ttt^a\), then
\[
  (z\ttt^a)^{\operatorname{wt}(z)}
  =z^{\operatorname{wt}(z)-a}
   (z\ttt^{\operatorname{wt}(z)})^a
\]
shows that it cannot contain \(z\ttt^{\operatorname{wt}(z)}\).  Choose \(u\ttt^w\) of maximal weight among the generators such that \(u\ttt^w \notin I\).  For any \(v>w\) and generator \(z\ttt^v\), the identity
\[
  (z\ttt^w)^v=z^{v-w}(z\ttt^v)^w
\]
shows that \(z\ttt^w\) also belongs to the prime.
Therefore,
\[
  \sigma^i(u\ttt^w)\equiv u\ttt^w
\]
modulo the prime, for every \(i\).  Hence, the prime does not contain \(H_u\).  Lying over for
\(\calS(V)^G\subset\calS(V)\) proves that the invariant opens cover the invariant \(\Proj\).

\end{proof}

\begin{lemma}\label{lem:hb-root-charts-02}
Put
\[
  A_u:=\left((\calS(V)^G)_{H_u}\right)_0,
  \qquad
  C_u:=\left(\calS(V)_{H_u}\right)_0.
\]
Let \(\mathcal X_u\) be the set of chosen Jordan coordinates other
than \(u\).  Then there is a regular ring
\[
  P_u=
  k[\rho,\{Z_z\}_{z\in\mathcal X_u},\Delta_u^{-1}],
\]
where
\[
  u=\rho^w,\qquad
  z=Z_z\rho^{\operatorname{wt}(z)}\quad(z\in\mathcal X_u),\qquad
  \Delta_u=\prod_{i\in\Fp}\frac{\sigma^i(u)}u,
\]
such that
\[
  P_u^{\mu_w}=C_u,\qquad A_u=C_u^G.
\]
\end{lemma}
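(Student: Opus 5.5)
The plan is to treat \(D_+(u\ttt^w)\) in \(\Proj\calS(V)\) as a standard weighted-blowup chart and then pass to the further localization by \(H_u\).

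\textbf{Step 1: identify the weighted chart.} The degree-zero part of \(\calS(V)_{u\ttt^w}\) is the usual weighted chart ring. Concretely, it is the \(\mu_w\)-invariants of
\[
  k[\rho,\{Z_z\}_{z\in\mathcal X_u}]
\]
under the substitutions \(u=\rho^w\) and \(z=Z_z\rho^{\operatorname{wt}(z)}\). Here \(\zeta\in\mu_w\) acts by \(\rho\mapsto\zeta\rho\) and \(Z_z\mapsto\zeta^{-\operatorname{wt}(z)}Z_z\).

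To see this, send a monomial \(x^\alpha\ttt^{m}/(u\ttt^w)^{c}\) of degree zero, so \(m=cw\) and \(\operatorname{wt}(x^\alpha)\ge cw\), to its expression in \(\rho,Z_z\). The image is a monomial \(\rho^{\operatorname{wt}(x^\alpha)-cw}\prod Z_z^{\alpha_z}\), which is \(\mu_w\)-invariant. Conversely, every invariant monomial \(\rho^e\prod Z_z^{\alpha_z}\) has \(e\equiv-\sum\alpha_z\operatorname{wt}(z)\pmod w\), with \(e\) and \(\sum\alpha_z\operatorname{wt}(z)\) nonnegative. It is therefore hit by taking \(c\) with \(e=\operatorname{wt}(x^\alpha)-cw\), adjusting the power of \(u\) in \(x^\alpha\) when \(c<0\) would occur; equivalently, allow negative powers of \(u\ttt^w\) times \(u\). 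This is the standard toric check (a saturated semigroup), and it is where I will be careful.

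\textbf{Step 2: localize at \(H_u\).} Write \(H_u=(u\ttt^w)\cdot\prod_{i\neq0}\sigma^i(u\ttt^w)\). Then
\[
  H_u/(u\ttt^w)^p=\prod_{i}\sigma^i(u)/u^p=\Delta_u .
\]
Here \(\sigma^i(u)\) is \(u\) plus lower-index coordinates of the same block, each of weight exactly \(w+\) (the number of steps). So in chart coordinates \(\sigma^i(u)/u=1+\sum(\text{polys})Z_z\rho^{\ge1}\cdot\ldots\), and \(\Delta_u\) is a \(\mu_w\)-invariant element of the chart ring.

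Hence localizing \(\calS(V)\) at \(H_u\) is the same as localizing at \(u\ttt^w\) and then inverting \(\Delta_u\), so
\[
  C_u=\bigl(k[\rho,Z]^{\mu_w}\bigr)_{\Delta_u}=P_u^{\mu_w},
\]
where the last equality uses that \(\Delta_u\) is invariant. The ring \(P_u\) is a localization of a polynomial ring, hence regular.

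\textbf{Step 3: compare invariants.} The identity \(A_u=C_u^G\) follows from Lemma~\ref{lem:all-v2-invariant-localization} applied to \(C=\calS(V)\) and the invariant homogeneous element \(H_u\), taking degree-zero parts.

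\textbf{Main obstacle.} The delicate point is checking that \(G\) acts on \(C_u\) at all. The \(\sigma\)-translates of \(u\ttt^w\) need not be units on \(D_+(u\ttt^w)\); this is exactly why one inverts \(\Delta_u\), which makes all \(\sigma^i(u)/u\) invertible. With that in place, the translates are units and \(G\)-stability of the chart follows.
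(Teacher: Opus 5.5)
Your proposal is correct and follows the paper's proof. You use \(H_u=(u\ttt^w)^p\Delta_u\) to identify \(C_u\) with the weighted chart ring of \(D_+(u\ttt^w)\) localized at the invariant \(\Delta_u\), check by monomials that this chart ring is \(k[\rho,\{Z_z\}]^{\mu_w}\), and get \(A_u=C_u^G\) from Lemma~\ref{lem:all-v2-invariant-localization}.

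There is one small sign slip: with \(\rho\mapsto\zeta\rho\) and \(Z_z\mapsto\zeta^{-\operatorname{wt}(z)}Z_z\), invariance of \(\rho^e\prod_z Z_z^{\alpha_z}\) means \(e\equiv+\sum_z\alpha_z\operatorname{wt}(z)\pmod w\), not \(e\equiv-\sum_z\alpha_z\operatorname{wt}(z)\). Your own forward computation \(e=\operatorname{wt}(x^\alpha)-cw\) gives the correct sign. Also, the \(G\)-action on \(C_u\) is automatic because \(H_u\) is \(G\)-invariant, so that is not really the delicate point.
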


\begin{proof}
Because \(H_u\) is invariant, localization commutes with invariants, and
\[
  A_u=C_u^G.
\]
The equality \(H_u=(u\ttt^w)^p\Delta_u\) identifies \(C_u\) with the
degree-zero ring of \(D_+(u\ttt^w)\) since \(\Delta_u\) is invertible.  On the
root cover, \(\mu_w\) acts by
\[
  \rho\longmapsto\zeta\rho,\qquad
  Z_z\longmapsto\zeta^{-\operatorname{wt}(z)}Z_z.
\]
A monomial \(\rho^bZ^\alpha\) is fixed precisely when
\(b-\sum_z\operatorname{wt}(z)\alpha_z\) is divisible by \(w\).  Such
monomials are exactly the pullbacks of degree-zero chart monomials.  Hence
\(P_u^{\mu_w}=C_u\).
\end{proof}

\begin{lemma}\label{lem:hb-root-charts-03}
There is a finite \'etale regular \(P_u\)-algebra \(B_u\) and a
finite group \(\Gamma_u\) with \(A_u=B_u^{\Gamma_u}\).  After strict
henselization at a chosen geometric point and completion, every local factor is an inertia quotient
\[
  \widehat{A_u^{\mathrm{sh}}}
  \simeq \widehat{B_{u,q}^{\mathrm{sh}}}^{\,P_{u,q}\rtimes T_{u,q}},
  \qquad
  P_{u,q}\in\{1,C_p\},\quad T_{u,q}\le\mu_w.
\]
If \(P_{u,q}=C_p\), the wild
quotient \(\widehat{B_{u,q}^{\mathrm{sh}}}^{\,P_{u,q}}\) is regular.
\end{lemma}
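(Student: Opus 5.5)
The plan is to realize \(A_u\) as the invariant ring of a single finite group acting on a regular ring, by lifting the \(G\)-action from \(C_u\) to a Kummer-type cover of \(P_u\). Then I would read off the local structure from the standard decomposition of inertia groups.

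\emph{Constructing \(B_u\) and \(\Gamma_u\).} First note that \(w\le d-1\le p-1\), so \(w\) is invertible in \(k\) and \(\mu_w\) is a tame, \'etale group. Each factor \(\sigma^i(u)/u\) divides the unit \(\Delta_u\) in \(C_u\), so it is itself a unit in \(C_u\subset P_u\). Put
\[
  B_u:=P_u[\{\varepsilon_i\}_{i\in\Fp}]\big/\bigl(\varepsilon_i^w-\sigma^i(u)/u\bigr).
\]
This is a Kummer extension by \(w\)-th roots of units, so it is finite \'etale over \(P_u\) and hence regular. The lift of \(\sigma\) to \(B_u\) is forced by \(\rho\mapsto\varepsilon_1\rho\), \(Z_z\mapsto\sigma(z)/(\varepsilon_1\rho)^{\operatorname{wt}(z)}\). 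Using the cocycle relation \(\sigma^i(u)/u\cdot\sigma^{i}(\sigma(u)/u)=\sigma^{i+1}(u)/u\), the \(\varepsilon_j\) are sent to \(\varepsilon_{j+1}/\varepsilon_1\) times \(w\)-th roots of unity. Define \(\Gamma_u\) to be the group of automorphisms of \(B_u\) generated by:
\begin{itemize}
  \item the lifts of \(\sigma\),
  \item the \(\mu_w\)-action on \((\rho,Z_z)\),
  \item the deck transformations \(\varepsilon_i\mapsto\zeta_i\varepsilon_i\).
\end{itemize}
This group is finite: the tame group \(K\) generated by the last two sets is normal, of exponent dividing \(w\), with \(\Gamma_u/K\simeq C_p\). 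By Lemma~\ref{lem:hb-root-charts-02}, \(B_u^{K}=P_u^{\mu_w}=C_u\), and therefore \(B_u^{\Gamma_u}=C_u^G=A_u\).

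\emph{Local structure.} At a geometric point \(q\) of \(\operatorname{Spec}B_u\), passing to strict henselization and completion replaces the quotient by the inertia group \(I_q\subset\Gamma_u\). As usual, \(I_q\) is an extension of a cyclic prime-to-\(p\) group by its normal \(p\)-Sylow subgroup. Since \(|\Gamma_u|\) has \(p\)-part exactly \(p\), we get \(P_{u,q}\in\{1,C_p\}\). The Kummer deck group acts freely on \(B_u\) over \(P_u\), because the cover is \'etale. Hence \(I_q\) meets it trivially, and the tame quotient of \(I_q\) embeds into the image of \(\mu_w\), which gives \(T_{u,q}\le\mu_w\). The semidirect-product form follows from Schur--Zassenhaus.

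\emph{Regularity of the wild quotient (main obstacle).} I expect this to be the hardest step. Suppose \(P_{u,q}=C_p\), so \(\sigma\) (suitably lifted) fixes \(q\). I would show two things:
\begin{itemize}
  \item the ideal \(I_{C_p}=(\sigma(b)-b)\) in \(\widehat{B^{\mathrm{sh}}_{u,q}}\) is principal, generated by an element \(r\) that is part of a regular system of parameters;
  \item there is a parameter \(s\) with \(\sigma(s)=s+r\cdot(\text{unit})\).
\end{itemize}
For this I would imitate Lemma~\ref{lem:hb-tail-dvr} locally on the chart: take a Jordan block of size \(d\ge2\), with \(r\) the weight-one coordinate and \(s\) the weight-zero coordinate in chart form. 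The Jordan shape makes every other chart coordinate congruent to an invariant modulo \(r\).

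Next I would modify the remaining parameters, by an averaging or successive-approximation argument modulo powers of \(r\), so that they, together with \(r\), become \(\sigma\)-invariant. The point is that the action is a translation in \(s\) with fixed divisor \(r=0\). Then the argument of Lemma~\ref{lem:all-v2-translation-invariants} applies verbatim: the invariant ring is the power series ring in \(r\), the invariant parameters, and \(N=s^p-(r\cdot\text{unit})^{p-1}s\), which is regular.

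The delicate part is producing invariant lifts of the other parameters, and checking that the fixed locus of \(\sigma\) is exactly the divisor \(r=0\) and not something larger. If direct averaging fails, I would fall back on the criterion that a \(C_p\)-action on a regular local ring whose ideal \(I_G\) is generated by a single parameter has a regular quotient, via the Artin--Schreier presentation and the Nakayama argument from Lemma~\ref{lem:hb-tail-dvr}.
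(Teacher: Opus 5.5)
\paragraph{The gap: choice of \(r\) and \(s\) at closed fixed points.}
Your construction of \(B_u\) and \(\Gamma_u\) matches the paper's, as does the reduction to an inertia quotient: you adjoin Kummer roots of the units \(\sigma^i(u)/u\) and lift \(\sigma\) by \(\rho\mapsto\varepsilon_1\rho\), \(\varepsilon_i\mapsto\varepsilon_{i+1}/\varepsilon_1\). The regularity step, however, has a genuine gap. Following Lemma~\ref{lem:hb-tail-dvr}, you take \(r\) to be the weight-one coordinate of some block and \(s\) its weight-zero coordinate. That works at the generic point of \(E\), but not at closed fixed points of a chart with \(w\ge2\). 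There every weight-one coordinate has the form \(x_{d-1}=Z\rho\), where \(Z\) is a normalized coordinate that may vanish at \(q\).

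Concretely, take the point \(\rho=0\), \(\varepsilon_i=1\), all \(Z_z=0\). It lies in the chart, is fixed by the lift of \(\sigma\), and is exactly the point used in Proposition~\ref{prop:hb-terminal-singular}. At this point:
\begin{itemize}
\item every weight-one coordinate lies in \(\mathfrak m^2\);
\item every weight-zero coordinate satisfies \(\sigma(s)-s\in\rho\,\mathfrak m\).
\end{itemize}
So neither of your two bullets follows from this choice. The statement you flag as delicate, that the augmentation ideal is exactly \((\rho)\), is precisely the missing content. Your fallback to the Kir\'aly--L\"utkebohmert criterion cannot be used until it is proved.

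\paragraph{The missing idea.}
Use the block of \(u\) itself, together with the twist \(\sigma(\rho)=\varepsilon_1\rho\).

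First normalize the lift. Since \(H^1(C_p,K)=0\) for the coprime group \(K\), the wild inertia generator is conjugate to the standard lift. The standard lift acts on residues by \(\bar\varepsilon_{i+1}=\bar\varepsilon_1\bar\varepsilon_i\) with \(\bar\varepsilon_0=\bar\varepsilon_p=1\). Hence \(\bar\varepsilon_1^p=1\), so all \(\bar\varepsilon_i=1\). Then \(\varepsilon_i^w=1+\rho A_i\) gives \(\varepsilon_i\equiv1\pmod\rho\), and the explicit formulas yield \((\sigma-1)\widehat B\subset(\rho)\).

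For the reverse inclusion, write \(u=x_j\), \(x_{j+\ell}=S_\ell\rho^{w-\ell}\) with \(S_0=1\), and \(\varepsilon_1=1+\rho A\). From \(\sigma(S_\ell)=(S_\ell+\rho S_{\ell-1})/\varepsilon_1^{w-\ell}\) one gets \(\sigma(S_\ell)-S_\ell=\rho e_\ell\) with \(\bar e_\ell=\bar S_{\ell-1}-(w-\ell)\bar A\bar S_\ell\). If no \(e_\ell\) were a unit, descending induction from \(\ell=w\) would force \(\bar S_0=0\), which is impossible. Hence \(\rho\in(\sigma-1)\widehat B\), the augmentation ideal is \((\rho)\), and Kir\'aly--L\"utkebohmert gives regularity.

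At the point above, the translation parameter is \(S_1=x_{j+1}/\rho^{w-1}\), not a weight-zero coordinate. Also, applying Lemma~\ref{lem:all-v2-translation-invariants} ``verbatim'' would require \(\sigma(s)-s\) to be invariant, which you never arrange. So the criterion is needed in any case.

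\paragraph{Two smaller points.}
\begin{itemize}
\item Your claim that an inertia group is ``as usual'' an extension of a cyclic prime-to-\(p\) group by a normal \(p\)-Sylow is a DVR fact. It is false for higher-dimensional regular local rings. Here the structure comes instead from two facts:
\begin{itemize}
\item \(I_q\cap K\subset\mu_w\), since a nontrivial deck component moves the residue of some unit \(\varepsilon_i\) while \(\mu_w\) fixes them;
\item the standard lift commutes with \(\mu_w\), so \(\mu_w\) is central in \(\Gamma_u\).
\end{itemize}
\item The isomorphism \(\Gamma_u/K\simeq C_p\) needs a lift of order \(p\). Set \(\varepsilon_0=1\) instead of adjoining a root of \(1\), and define \(\sigma(\varepsilon_i)=\varepsilon_{i+1}/\varepsilon_1\) exactly. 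Then check that \(\sigma\) permutes the roots \(\rho\varepsilon_i\) cyclically.
\end{itemize}
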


\begin{proof}
For \(i\in\Fp^\times\), put \(F_i=\sigma^i(u)/u\) and adjoin
\(Q_i\) with \(Q_i^w=F_i\). Take
\[
  B_u=P_u[Q_i\mid i\in\Fp^\times]/(Q_i^w-F_i).
\]
Here \(F_i\) are units on \(D_+(H_u)\), and \(w<p\), so \(B_u\) is finite
\'etale and regular over \(P_u\).  Let
\(D_Q\simeq(\mu_w)^{p-1}\) act on each \(Q_i\) independently.  Then
\(B_u^{D_Q}=P_u\).  The root group \(\mu_w\) acts on \(P_u\) as in the proof of Lemma~\ref{lem:hb-root-charts-02} and
fixes every \(Q_i\).  Put \(Q_0=Q_p=1\) and \(\rho_i=\rho Q_i\), so that
\(\rho_i^w=\sigma^i(u)\) and \(\rho_p=\rho_0\).  The \(C_p\)-action lifts by
\[
  \sigma(\rho_i)=\rho_{i+1},\qquad
  \sigma(\rho)=\rho Q_1,\qquad
  \sigma(Q_i)=Q_{i+1}/Q_1,
\]
and, for \(z\in\mathcal X_u\), by
\[
  \sigma(Z_z)=
  \frac{\sigma(z)}{(\rho Q_1)^{\operatorname{wt}(z)}}.
\]
These formulas cyclically permute the roots \(\rho_i\)
and recover the original action on every coordinate \(z\); hence the lift has
order \(p\).
It normalizes \(D_Q\times\mu_w\), and for
\[
  \Gamma_u=(D_Q\times\mu_w)\rtimes C_p
\]
one obtains
\[
  B_u^{\Gamma_u}=(P_u^{\mu_w})^G=C_u^G=A_u.
\]
In particular, \(A_u\) is normal, in agreement with
Lemma~\ref{lem:normal-invariant-proj}.

After strict henselization of \(A_u\) at a geometric point, the corresponding \(B_u^{\mathrm{sh}}:=B_u\otimes_A A^{\mathrm{sh}}\) splits into local factors.  The factors
form one orbit under \(\Gamma_u\), because otherwise the sum of the idempotents over one orbit
would give a nontrivial idempotent in the local invariant ring \((B_u^{\mathrm{sh}})^{\Gamma_u}=A_u^{\mathrm{sh}}\).  The completed
invariant ring \(\widehat{A_u^{\mathrm{sh}}}\) is therefore the invariant ring of one completed factor \(\widehat{B_{u,q}^{\mathrm{sh}}}\) under
its inertia group.  A nontrivial element of \(D_Q\) changes the residue of
some \(Q_i\), so \(D_Q\) does not contribute to the inertia group.  Hence the tame part of the inertia group \(T_{u,q}\) is
contained in \(\mu_w\), and the wild part \(P_{u,q}\) is either trivial or \(C_p\).

Suppose that the wild part is nontrivial.  Coprime cohomology gives
\(H^1(C_p,D_Q\times\mu_w)=0\).  A twisted lift of the wild generator \(\sigma\) is
therefore conjugate to the standard lift, which acts on the residues by
\[
  \bar Q_{i+1}=\bar Q_1\bar Q_i,\qquad
  \bar Q_0=\bar Q_p=1.
\]
Thus \(\bar Q_i=\bar Q_1^i\) and \(\bar Q_1^p=1\).  In characteristic \(p\),
this forces \(\bar Q_i=1\) for every \(i\).  In normalized coordinates,
\[
  Q_i^w=\frac{\sigma^i(u)}u=1+\rho A_i.
\]
Since
\[
  Q_i^w-1=(Q_i-1)(1+Q_i+\cdots+Q_i^{w-1})
\]
and the second factor has residue \(w\in k^\times\), one has
\(Q_i-1\in(\rho)\).  The formulas of the action of \(\sigma\) now give
\[
  (\sigma-1)\widehat{B_{u,q}^{\mathrm{sh}}}\subset(\rho).
\]

We want to show the other inclusion. Below we let \(u=x_j\), write \(S_0=1\) and
\[
  x_{j+\ell}=S_\ell\rho^{w-\ell},
  \qquad 0\le\ell\le w.
\]
Then
\[
  \sigma(S_\ell)-S_\ell=\rho\varepsilon_\ell.
\]
Write \(Q_1=1+\rho A\).  Since \(\sigma(x_{j+\ell})=x_{j+\ell}+x_{j+\ell-1}\)
and \(\sigma(\rho)=\rho Q_1\), we have
\[
  \sigma(S_\ell)=\frac{S_\ell+\rho S_{\ell-1}}{Q_1^{\,w-\ell}},
  \qquad\text{hence}\qquad
  \bar\varepsilon_\ell=\bar S_{\ell-1}-(w-\ell)\bar A\,\bar S_\ell
\]
for \(1\le\ell\le w\).  If no \(\varepsilon_\ell\) is a unit, then every
\(\bar\varepsilon_\ell\) vanishes.  The case \(\ell=w\) gives
\(\bar S_{w-1}=0\), and descending induction gives
\(\bar S_{w-1}=\cdots=\bar S_0=0\), contrary to \(S_0=1\).  Thus some
\(\varepsilon_\ell\) is a unit, so \(\rho=\varepsilon_\ell^{-1}(\sigma-1)S_\ell\)
lies in the augmentation ideal \(((\sigma-1)\widehat{B_{u,q}^{\mathrm{sh}}})\), which is therefore \((\rho)\).
By Kir\'aly--L\"utkebohmert's theorem, a prime-order action on a regular local
ring with principal augmentation ideal has a regular invariant
ring~\cite{KiralyLutkebohmert2010}.
\end{proof}

From the lemma above, for each affine chart of \(W\), its singularity is governed by the tame part \(T_{u,q}\). We consider the action of \(T_{u,q}\) on the quotient by \(P_{u,q}\) in the following lemmas.

\begin{lemma}
\label{lem:hb-character-survival}
Use the notation of Lemma~\ref{lem:hb-root-charts-03}, write
\(P=P_{u,q}\) and \(T=T_{u,q}\), and suppose that \(P=C_p\).  For the
maximal ideal filtration on the completed local factor, there is a
\(T\)-eigen parameter system
\[
  \rho,\quad \Theta,\quad Z_1,\ldots,Z_e
\]
on the associated graded ring, such that the graded action \(\sigma_0\) gives
\[
  \sigma_0(\rho)=\rho,\qquad
  \sigma_0(\Theta)=\Theta+\rho,\qquad
  \sigma_0(Z_j)=Z_j.
\]
The parameters \(\rho\) and \(\Theta\) have the same \(T\)-eigenvalues.  The
eigenvalues of the \(Z_j\)'s are given in the same way as the action of \(\mu_w\) described in the proof of Lemma~\ref{lem:hb-root-charts-02}.
There are \(P\)-invariant, \(T\)-eigen elements
\[
  \widetilde\rho,\qquad
  \widetilde N_\Theta,\qquad
  \widetilde Z_1,\ldots,\widetilde Z_e
\]
whose initial forms are
\[
  \rho,\qquad
  N_\Theta=\Theta^p-\rho^{p-1}\Theta,\qquad
  Z_1,\ldots,Z_e,
\]
which forms a regular system of parameters of the \(P\)-invariant local ring.  If an element of \(T\) has eigenvalue \(\zeta^a\) on the initial form
\(\rho\), then its eigenvalues on these lifted classes are \(\zeta^a\), \(\zeta^{pa}\), and
\(\zeta^{-\operatorname{wt}(z_j)a}\) for those coming from the weighted coordinates.
\end{lemma}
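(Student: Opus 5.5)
We work in the completed local factor \(\widehat{B}:=\widehat{B_{u,q}^{\mathrm{sh}}}\) from Lemma~\ref{lem:hb-root-charts-03}, with \(P=C_p\). There \((\sigma-1)\widehat B=(\rho)\), and \(\widehat B\) is a regular complete local ring of dimension \(N\) with residue field \(k\). The plan is to linearize the action of \(P\rtimes T\) on the cotangent space and on the graded ring. Then we lift invariants by a Nakayama/graded argument, and finally read off the \(T\)-characters.

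\textbf{Step 1: graded action.} The principal augmentation ideal gives \(\sigma\equiv \mathrm{id}\) on \(\mathfrak m/\mathfrak m^2\) modulo \(\rho\). Hence on \(\operatorname{gr}\widehat B=k[\mathfrak m/\mathfrak m^2]\) we have \(\sigma_0(x)=x+\rho\,\delta(x)\), where \(\delta\colon \mathfrak m/\mathfrak m^2\to k\) is a \(k\)-linear form. From the unit \(\varepsilon_\ell\) in the proof of Lemma~\ref{lem:hb-root-charts-03}, \(\delta(S_\ell)\ne0\), so \(\delta\neq0\). The computation \(\sigma(\rho)=\rho Q_1\) with \(Q_1\equiv1\) gives \(\delta(\rho)=0\).

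Since \(T\) has order prime to \(p\), we can choose a \(T\)-eigenbasis. Take \(\Theta\) to be a \(T\)-eigen lift of \(S_\ell\) (or of a suitable eigencombination) scaled so that \(\delta(\Theta)=1\). Take \(Z_j\) to be a \(T\)-eigenbasis of \(\ker\delta\) complementary to \(\rho\). Because \(T\) normalizes \(P\), and \(P\) is cyclic of order \(p\) while \(T\) is tame, \(T\) centralizes \(\sigma\) after reparametrization. (Coprimality: the conjugation action of \(T\) on \(C_p\) factors through \(\mathbb F_p^\times\); we also use that \(\sigma_0-1\) is \(T\)-equivariant.) The relation \(\sigma_0(\Theta)-\Theta=\rho\) then forces \(\Theta\) and \(\rho\) to have the same \(T\)-eigenvalue. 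If instead \(T\) acts on \(\sigma\) by a power \(\sigma\mapsto\sigma^c\), we absorb \(c\) into the eigenvalue relation; the main case is \(c=1\), since \(T\subset\mu_w\) commutes with the lifted \(\sigma\) on \(\rho\). The eigenvalues on \(Z_j\), and \(\zeta^a\) on \(\rho\), come from the \(\mu_w\)-action \(\rho\mapsto\zeta\rho\), \(Z_z\mapsto\zeta^{-\operatorname{wt}(z)}Z_z\), together with the fact that \(D_Q\) is not in inertia.

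\textbf{Step 2: lifting invariants.} In \(\operatorname{gr}\), the elements \(\rho\), \(Z_j\), and \(N_\Theta=\prod_{i}\sigma_0^i(\Theta)\) are \(\sigma_0\)-invariant. For lifts, take \(\widetilde\rho\) to be the \(T\)-eigen projection of a \(\sigma\)-invariant element with initial form \(\rho\). We can take \(\widetilde\rho\) to be a suitable normalization of the norm-type element \(\prod\sigma^i(\rho)\) raised to the power \(1/p\). Alternatively, and this is what we would try first, use the invariant ring \(\widehat B^P\), which is regular by Kir\'aly--L\"utkebohmert.

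Concretely, we would show that the initial-form map \(\operatorname{gr}(\widehat B^P)\to(\operatorname{gr}\widehat B)^{\sigma_0}=k[\rho,N_\Theta,Z_j]\) is surjective in low degrees. Take \(\widetilde N_\Theta:=\prod_i\sigma^i(\widetilde\Theta)\) for an eigen lift \(\widetilde\Theta\); this is invariant with initial form \(N_\Theta\). For \(\widetilde Z_j\), apply the trace-like averaging \(z\mapsto z-\Theta'\cdot(\sigma-1)z/\rho\) iteratively, correcting by \(\mathfrak m\)-adic approximation and convergence in the complete ring. Then apply the \(T\)-eigenprojector \(\frac1{|T|}\sum\chi(t)^{-1}t\); this preserves \(P\)-invariance because \(T\) normalizes \(P\).

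\textbf{Step 3: regular system of parameters.} The ring \(\widehat B^P\) is regular of dimension \(N=2+e\), and \(\widehat B\) is finite over it. The elements \(\widetilde\rho,\widetilde N_\Theta,\widetilde Z_j\) generate an ideal whose extension to \(\widehat B\) has radical \(\mathfrak m\), because their initial forms cut out the origin of \(\operatorname{gr}\). This makes them a system of parameters. To see that they are regular parameters, we would compute the embedding dimension via \(\widehat B^P\cong k[[\widetilde\rho,\widetilde N_\Theta,\widetilde Z]]\). This holds since \(\widehat B\) is free of rank \(p\) over \(k[[\widetilde\rho,\widetilde N_\Theta,\widetilde Z]]\) with basis \(1,\widetilde\Theta,\dots,\widetilde\Theta^{p-1}\), the analogue of Lemma~\ref{lem:all-v2-translation-invariants}. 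The invariant-extraction argument there (leading coefficient \(a\rho b_a\)) then shows that the invariants are exactly this power series ring. The eigenvalues are read off immediately: \(\zeta^a\) on \(\widetilde\rho\), \(\zeta^{pa}\) on \(\widetilde N_\Theta\) since it has initial form \(\Theta^p+\dots\), and \(\zeta^{-\operatorname{wt}(z_j)a}\) on \(\widetilde Z_j\).

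The main obstacle is Step 1's claim that \(T\) commutes with \(\sigma\), together with the existence of \(P\)-invariant lifts of the \(Z_j\) in Step 2. I would handle the lifts through the freeness argument rather than explicit averaging.
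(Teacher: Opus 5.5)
Your Steps~1 and~3 follow the paper: the degree-one action \(\sigma_0=1+\rho\,\delta\), \(\Theta\) from a centered \(S_\ell\), \(Z_j\) spanning \(\ker\delta\) modulo \(\rho\), \(T\)-eigenprojection, and a rank-\(p\) count. Your norm \(\widetilde N_\Theta=\prod_i\sigma^i(\widetilde\Theta)\) is a correct and simpler invariant lift of \(N_\Theta\), since \(\operatorname{gr}\widehat B\) is a domain.

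The gap is the existence of \(P\)-invariant lifts \(\widetilde\rho\) and \(\widetilde Z_j\). This is the real content of the lemma, and none of your suggestions provides them.
First, Frobenius is not surjective on \(\widehat B\). A \(p\)-th root of \(\prod_i\sigma^i(\rho)=\rho^p\prod_iQ_i\) exists only if \(\prod_iQ_i\) is a \(p\)-th power. For \(u=x_2\) in a \(V_4\)-block, with \(Z=x_1/\rho^3\), one gets \((\prod_iQ_i)^2=1-Z^{p-1}\rho^{p-1}\), whose differential is nonzero, so it is not.
Second, the correction \(z\mapsto z-\widetilde\Theta c\) with \(c=(\sigma-1)z/\rho\) does not raise the order of the error. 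If the linear form of \(c\) is \(\alpha\Theta+\beta\) with \(\beta\in\ker\delta\), the new degree-two error is \(-\alpha\rho(\Theta+\rho)\). The next step only flips the sign of \(\alpha\), so the iteration need not converge.
Third, freeness over \(k[[\widetilde\rho,\widetilde N_\Theta,\widetilde Z]]\) and the invariant-extraction argument presuppose that \(\widetilde\rho,\widetilde Z_j\) are already invariant, so they cannot be used to construct them. Kir\'aly--L\"utkebohmert regularity of \(\widehat B^P\) does not by itself give invariants with prescribed initial forms \(\rho\) and \(Z_j\).
Once lifts exist, your Step~3 does work. However, the leading-coefficient trick of Lemma~\ref{lem:all-v2-translation-invariants} must be replaced by a field-degree/normality argument, because \(\sigma(\widetilde\Theta)-\widetilde\Theta\) need not lie in \(k[[\widetilde\rho,\widetilde N_\Theta,\widetilde Z]]\).

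The missing idea is the exact image of \(\sigma_0-1\) on the graded ring. Put \(A_0=k[\rho,N_\Theta,Z_1,\ldots,Z_e]\) and \(\Tr_0=\sum_i\sigma_0^i\), and use the \(A_0\)-basis \(1,\Theta,\ldots,\Theta^{p-1}\) of \(\operatorname{gr}\widehat B\). Then \((\sigma_0-1)\Theta^i=i\rho\Theta^{i-1}+(\text{lower powers of }\Theta)\), \(\Tr_0(\Theta^i)=0\) for \(i\le p-2\), and \(\Tr_0(\Theta^{p-1})=-\rho^{p-1}\). Hence
\[
  \operatorname{im}(\sigma_0-1)=\rho\bigoplus_{i=0}^{p-2}A_0\Theta^i=\ker(\Tr_0)\cap\rho\operatorname{gr}\widehat B .
\]
Take any lift \(\tilde z\) of a \(\sigma_0\)-invariant form. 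The lowest-degree form of \((\sigma-1)\tilde z\) has degree strictly above that of \(\tilde z\). It is divisible by \(\rho\), because \((\sigma-1)\widehat B=(\rho)\) with \(\rho\) a regular parameter. It has trace zero, because \(\sum_i\sigma^i\circ(\sigma-1)=\sigma^p-1=0\). So it equals \((\sigma_0-1)y\) for a form \(y\) of that degree. Subtracting a lift of \(y\) kills this term without changing the initial form, and completeness yields a \(P\)-invariant lift. This is how the paper produces \(\widetilde\rho\), \(\widetilde N_\Theta\) and \(\widetilde Z_j\).

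Separately, in Step~1, normalizing \(P\) does not in general force \(T\) to centralize it. If \(t\sigma t^{-1}=\sigma^c\) with \(c\ne1\), the eigenvalue on \(\widetilde N_\Theta\) would be \(c^{-1}\zeta^{pa}\), so \(c\) cannot simply be absorbed. You genuinely need \(c=1\), which comes, as you note and as the paper uses, from the standard lift of \(\sigma\) commuting with \(\mu_w\).
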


\begin{proof}
Let \(\sigma_0\) be the graded action on \(\widehat{B_{u,q}^{\mathrm{sh}}}\) such that for each homogeneous element \(b\) of degree \(d\), \(\sigma_0(b)\) is the degree-\(d\) part of \(\sigma(b)\). The proof of Lemma~\ref{lem:hb-root-charts-03} gives
\(\sigma(S_\ell)-S_\ell=\rho\varepsilon_\ell\) with some unit
\(\varepsilon_\ell\).  After centering \(S_\ell\) at the chosen
geometric point and rescaling its initial form if necessary, we obtain the form \(\Theta\), such that
the operator \(\sigma_0-1\) has the image spanned by \(\rho\), sends
\(\Theta\) to \(\rho\), and kills \(\rho\).  By adding
\(T\)-eigen parameters \(Z_1,\ldots,Z_e\) that are normalized by \(\rho^{-\operatorname{wt}(z_j)}\) from the \(\sigma_0\)-invariant coordinates \(z_1,\ldots,z_e\), we have a basis of the kernel.  Moreover, \(\Theta\) has the same \(T\)-eigenvalues as
\(\rho\) since \(\sigma_0(\Theta)=\Theta+\rho\).

For the graded \(\sigma_0\)-invariant ring, it is regular with a system of parameters
\[
  \rho,\quad N_\Theta,\quad Z_1,\ldots,Z_e.
\]
By defining the trace map \(\displaystyle\Tr:=\sum_{i=0}^{p-1}\sigma_0^i\) with the properties
\[
\Tr(\Theta^i)=0 (0\leq i \leq p-2), 
\qquad
\Tr(\Theta^{p-1})=-\rho^{p-1},
\]
together with the basis \(1,\Theta,\ldots,\Theta^{p-1}\) over the graded invariant ring
\(A_0:=k[\rho,N_\Theta,Z_1,\ldots,Z_e]\), we have
\[
  \operatorname{im}(\sigma_0-1)=\rho\cdot\bigoplus_{i=0}^{p-2}A_0\Theta^i=\ker(\Tr)\cap\rho\operatorname{gr}\widehat{B_{u,q}^{\mathrm{sh}}}
\]
in the associated graded ring.  Start with arbitrary lifts of the
displayed initial forms in \(\widehat{B_{u,q}^{\mathrm{sh}}}\).  At each filtration degree of the images of \(\sigma-1\) of the lifts, the graded images
are trace-zero because \(\Tr(\sigma-1)=0\), and divisible by \(\rho\); the identity above then removes the difference by changing the lifts in the given degree.  Because of the completeness, such operations by degree converge to
\(P\)-invariant lifts.

Write \(\widehat B=\widehat{B_{u,q}^{\mathrm{sh}}}\) and \(A=\widehat B^{P}\).
The standard lift of \(\sigma\) commutes with \(\mu_w\), since \(\mu_w\) fixes
every coordinate \(z\) and every \(Q_i\).  After the conjugation in the proof of
Lemma~\ref{lem:hb-root-charts-03}, the tame group \(T\) therefore commutes with
\(P\), so \(T\) acts on \(A\) and \(\sigma-1\) preserves each \(T\)-eigenspace.
Since \(|T|\) is invertible in \(k\), projecting an invariant lift to the
\(T\)-eigenspace of its initial form keeps it \(P\)-invariant and does not
change its initial form.  Hence the lifts can be chosen to be \(T\)-eigen, with
the same eigenvalues as their initial forms.

It remains to see that the lifts generate the maximal ideal of \(A\).  Let
\(I\subset A\) be the ideal they generate.  In the associated graded ring,
the initial forms \(\rho,N_\Theta,Z_1,\ldots,Z_e\) generate an ideal with
quotient \(k[\Theta]/(\Theta^p)\), so \(\widehat B/I\widehat B\) has length at
most \(p\).  On the other hand, \(\widehat B\) is finite over the regular local
ring \(A\) and is Cohen--Macaulay, hence free by the Auslander--Buchsbaum
formula, of rank \(p\).  Since \(P\) is an inertia group, \(A\) and \(\widehat B\)
have the same residue field, and therefore
\[
  \operatorname{length}_{\widehat B}(\widehat B/I\widehat B)
  =p\cdot\operatorname{length}_A(A/I).
\]
Thus \(\operatorname{length}_A(A/I)=1\), that is, \(I\) is the maximal ideal
of \(A\).  So the lifts form a regular system of parameters of \(A\).
\end{proof}

\subsection{The terminal obstruction}
In this section, we prove that there is a root chart with a terminal \(\mu_2\)-quotient singularity when \(D_V=p\ge5\), and \(V\) contains a Jordan block
of size at least three.

\begin{lemma}\label{lem:hb-age}
Assume that \(D_V=p\ge5\), and \(V\) contains a Jordan block
of size at least three. On every completed local factor in Lemma~\ref{lem:hb-root-charts-03}, each
nontrivial element \(t\) of the residual tame group acts on the \(P\)-invariant regular
local ring with age greater than one, where the age of \(t\) with eigenvalues \(\zeta^{a_i}\)\((0\leq a_i<w)\) is defined as \(\displaystyle\frac{1}{w}\sum a_i\), according to \cite{ItoReid1996}.
\end{lemma}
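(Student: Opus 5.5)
The plan is to reduce the age bound to a count of fractional parts, using the explicit eigenvalues recorded in Lemma~\ref{lem:hb-character-survival}. Fix a root chart \(D_+(H_u)\), with \(u=x_j\) in a block \(V_d\) of weight \(w\), so that \(1\le w\le d-1\). Fix a completed local factor and a nontrivial \(t\in T_{u,q}\le\mu_w\). Say \(t\) acts on the initial form \(\rho\) by \(\zeta^a\), where \(a\not\equiv0 \pmod w\); let \(m>1\) be the order of \(t\) and set \(\langle x\rangle:=\{x\}\in[0,1)\). There are two cases.

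\emph{Case \(P_{u,q}=1\).} The completed factor is regular with parameters \(\rho,\{Z_z\}_{z\in\mathcal X_u}\). By Lemma~\ref{lem:hb-root-charts-02} the eigenvalues are \(\zeta^a\) on \(\rho\) and \(\zeta^{-\operatorname{wt}(z)a}\) on \(Z_z\). Hence
\[
\operatorname{age}(t)=\Bigl\langle \tfrac aw\Bigr\rangle+\sum_{z\in\mathcal X_u}\Bigl\langle -\tfrac{\operatorname{wt}(z)a}{w}\Bigr\rangle .
\]

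\emph{Case \(P_{u,q}=C_p\).} By Lemma~\ref{lem:hb-character-survival}, the \(P\)-invariant regular parameters are \(\widetilde\rho,\widetilde N_\Theta,\widetilde Z_1,\dots,\widetilde Z_e\), with eigenvalues \(\zeta^a,\zeta^{pa},\zeta^{-\operatorname{wt}(z_i)a}\). Here the \(Z_i\) run over \(\mathcal X_u\) minus the one coordinate \(S_\ell\) (that is, \(x_{j+\ell}\)) that produced \(\Theta\). So the formula is the same as in the first case, except that one summand \(\langle-\operatorname{wt}(x_{j+\ell})a/w\rangle\) is replaced by \(\langle pa/w\rangle\). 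Since \(p\) is prime to \(w<p\), this replacement term is nonzero.

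In both cases every nonzero summand is at least \(1/m\ge 1/w\), and the \(\rho\)-summand \(\langle a/w\rangle\) is always nonzero. So it suffices to show two things: that at least three summands are nonzero, and, when \(m\ge3\), that the nonzero summands cannot all be as small as \(1/m\) with only three of them.

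The structural input comes from \(D_V=p\ge5\). A single block \(V_d\) has \(\binom d2=p\) prime only when \(d=3\) and \(p=3\). Therefore \(V\) has at least two nontrivial blocks, and one of them, say \(V_{d_0}\), has \(d_0\ge3\). Every nontrivial block contributes a weight-one coordinate, whose summand \(\langle -a/w\rangle=1-\langle a/w\rangle\) is nonzero. It pairs with the \(\rho\)-summand to give exactly \(1\). Hence \(\operatorname{age}(t)\ge1\) as soon as one weight-one coordinate survives in the list (not equal to \(u\), and not replaced by \(\Theta\)). To get strict inequality I need one further nonzero summand. The candidates are:
\begin{itemize}
\item a second weight-one coordinate;
\item the \(\Theta\)-replacement \(\langle pa/w\rangle\);
\item a coordinate of weight \(v\) with \(w\nmid va\); weight \(v=2\) in the size \(\ge3\) block is the natural choice.
\end{itemize}

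Note that \(w=1\) is impossible, since \(t\ne1\); so \(w\ge2\). If \(u\) has weight \(w\ge2\), then \(u\) itself is not a weight-one coordinate. In the first case this immediately leaves at least two weight-one summands, giving age \(\ge1+\langle-a/w\rangle>1\). In the second case, \(\Theta\) may consume one weight-one coordinate, but the \(\Theta\)-term \(\langle pa/w\rangle\ne0\) compensates.

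The main obstacle is the bookkeeping in these degenerate configurations. The worst situation is exactly two nontrivial blocks, with both the \(\Theta\)-replacement and the choice of \(u\) landing on the only available odd-weight coordinates, especially when \(m=2\), where each nonzero summand equals \(1/2\). There I would argue directly. In that situation \(D_V=p\ge5\) forces the blocks to be large enough that the \(V_{d_0}\) block still contains a coordinate of weight \(v\in\{1,2,\dots,d_0-1\}\setminus\{w\}\) with \(w\nmid va\); for example \(v=w-1\) or \(v=w+1\) works when \(m=2\). This yields a third half and so age \(\ge3/2>1\).

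I would organize the final write-up as the pairing identity \(\langle a/w\rangle+\langle-a/w\rangle=1\), followed by a short case split on whether \(u\) has weight one and on whether \(P_{u,q}\) is trivial.
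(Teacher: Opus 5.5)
\textbf{Verdict.} Your argument is correct, and it takes a different route from the paper's.

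\textbf{The paper's route.} The paper splits on the order \(m\) of \(t\).
\begin{enumerate}[label=(\alph*)]
  \item For \(m\ge3\) it works inside the block containing \(u\). It pairs the weight-one coordinate of that block with \(\rho\) to get exactly \(1\). The strict inequality comes from the weight-\((w-1)\) coordinate if \(P_{u,q}=1\), or from \(N_\Theta\) if \(P_{u,q}=C_p\).
  \item For \(m=2\) it counts parity. The number \(O(V)\) of odd Jordan weights is odd, because the weights sum to \(D_V=p\). It cannot equal \(1\) when \(p\ge5\). Hence \(O(V)\ge3\), and the age is at least \(2\).
\end{enumerate}

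\textbf{Your route.} You split on whether \(P_{u,q}\) is trivial. Your only structural input is that \(\binom d2\) is prime only for \(d=3\). So \(D_V=p\ge5\) forces at least two nontrivial blocks, and hence at least two weight-one coordinates in \(\mathcal X_u\) once \(w\ge2\). The identity \(\langle a/w\rangle+\langle -a/w\rangle=1\) then works uniformly in \(m\). The strict inequality comes from the second weight-one summand when \(P_{u,q}=1\). When \(P_{u,q}=C_p\) it comes from \(\langle pa/w\rangle>0\): \(\Theta\) displaces only one entry of the eigenvalue list, so at least one weight-one summand survives.

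\textbf{Comparison.} Your argument is shorter and avoids the case split on \(m\). The paper's parity count buys the sharper bound \(\operatorname{age}\ge2\) when \(t\) is an involution. The same kind of count reappears for \(M\ge4\) in Proposition~\ref{prop:hb-terminal-singular}.

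\textbf{Presentation.} Your proof is already complete once you observe that the \(\Theta\)-term compensates. The final ``main obstacle'' paragraph does not address any remaining case; two nontrivial blocks with \(m=2\) is already covered by your two-case argument. As written, that paragraph is also not justified, for two reasons:
\begin{enumerate}[label=(\roman*)]
  \item \(u\) need not lie in \(V_{d_0}\).
  \item The weight-\((w-1)\) coordinate of \(u\)'s block has the same eigenvalue \(\zeta^{-(w-1)a}=\zeta^a\) as \(\Theta\), so it may be exactly the coordinate that \(\Theta\) replaces.
\end{enumerate}
The ``it suffices to show two things'' reduction is also not what you end up using. Drop both and present only the pairing argument.
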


\begin{proof}
Let \(t\) have eigenvalue \(\zeta^a\) \((0< a<w)\) on the root coordinate and on the lifted class with initial
form \(\rho\), and let \(m=w/\gcd(w,a)\) be the order of \(\zeta^a\).  Since \(T\le\mu_w\), one has \(m\mid w<p\).
If the wild subgroup is nontrivial, the
\(P\)-invariant regular local ring has an invariant class with initial form
\[
  N_\Theta=\Theta^p-\rho^{p-1}\Theta,
\]
on which \(t\) has eigenvalue \(\zeta^{pa}\) by
Lemma~\ref{lem:hb-character-survival}.  This eigenvalue is nontrivial, since
\(1<m<p\) and hence \(m\nmid p\).

Assume first that \(m\ge3\).  Then \(w\ge3\), and the block containing
\(u\) has a coordinate of weight \(1\).  The corresponding normalized coordinate having \(t\)-eigenvalue \(\zeta^{-a}\), together
with \(\zeta^a\) from \(\rho\), contributes one to the age. If the wild subgroup is trivial, the parameter from the coordinate of weight
\(w-1\) with eigenvalue \(\zeta^{-(w-1)a}\) contributes \(\frac{a}{w}\) to the age additionally.  If the wild subgroup is nontrivial, because
\(\zeta^{-a}\ne\zeta^{a}\) when \(m\ge3\), the direction from the coordinate of weight \(1\) is different from \(N_\Theta\). Then \(N_\Theta\) contributes a nonzero part to the age.  Thus the age is
greater than one.

It remains to consider \(m=2\).  Let \(O(V)\) be the number of odd
Jordan weights.  Since the sum of Jordan weights is \(D_V=p\), the integer \(O(V)\) is odd.
If \(O(V)=1\), the only possible case is when \(D_V=3\) with the only non-trivial Jordan block of size \(3\). Since \(p\geq 5\) by assumption, \(O(V)\ge3\).
The root coordinate \(\rho\) and the odd normalized directions give at least four
directions, such that each direction contributes \(\frac{1}{2}\) to the age.  (If one of these directions is replaced by
\(N_\Theta\), the contribution by it is again \(\frac{1}{2}\) because \(p\) is odd.)
The age is therefore at least \(2\).
\end{proof}

\begin{remark}\label{rem:V3-chart-age}
The tame action can have the age \(1\) only when \(p=3\) and \(V=V_3\) after removing the trivial summands. In this case, by computation, the square-root chart of \(W\) carries \(A_1\)-singularities, which can be resolved by a crepant blowup; the other invariant root chart of \(W\) is smooth. Since \(W\to X\) is crepant (we will prove this property in the next section), we then obtain a crepant resolution \(\widetilde{W}\to W\to X\) by composing two blowups. A crepant resolution of \(V_3/C_3\) is also constructed in \cite[Example~6.23]{Yasuda2014pCyclic} by the composition of two blowups, and we can check that the two blowups in Yasuda's construction are isomorphic to \(W\to X\) and \(\widetilde{W}\to W\) respectively.
\end{remark}

The preceding lemmas show that under the assumption in this section, \(W\) is terminal.  We then exhibit a
singular point on a weight-two root chart.

\begin{proposition}
\label{prop:hb-terminal-singular}
Assume that \(D_V=p\ge5\), and \(V\) contains a Jordan block \(V_d\)
of size \(d\ge3\). Then the model \(W\) is terminal. Moreover, let
\(H_u\) be the orbit norm associated with its weight-two coordinate
\[
  u=x_{d-2}
\]
as in Proposition~\ref{lem:hb-root-charts-01}.  There is a geometric point
\(\bar w\in D_+(H_u)\) and integers \(r_0\ge0\), \(M\ge4\) such that
\[
  \widehat{\mathcal O_{W,\bar w}^{\mathrm{sh}}}
  \simeq
  k[[v_1,\ldots,v_{r_0}]]\widehat\otimes
  k[[z_1,\ldots,z_M]]^{\mu_2},
\]
where \(\mu_2\) fixes the \(v_i\)'s and sends every \(z_j\) to \(-z_j\).
Consequently, \(W\) is singular.
\end{proposition}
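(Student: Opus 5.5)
The plan has two parts: terminality of \(W\), then an explicit \(\mu_2\)-singular point on the weight-two root chart.

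\emph{Terminality.} By Proposition~\ref{lem:hb-root-charts-01} the charts \(D_+(H_u)\) cover \(W\), so it suffices to treat each completed strictly henselian local ring. By Lemma~\ref{lem:hb-root-charts-03} each such ring is \(\widehat{B}^{P\rtimes T}\) with \(T\le\mu_w\) tame, and \(\widehat B^P\) is regular. This holds for \(P=1\) since \(B_u\) is regular, and for \(P=C_p\) by Kir\'aly--L\"utkebohmert. Because \(T\) commutes with \(P\) (Lemma~\ref{lem:hb-character-survival}), the local ring is a tame quotient \(\widehat A/T\) of a regular complete local ring \(\widehat A\) whose residue field is \(k\). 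Since \(|T|\) is invertible, we may linearize the action of \(T\). Lemma~\ref{lem:hb-age} says every nontrivial \(t\in T\) has age \(>1\). In particular \(T\) contains no pseudo-reflections, because a pseudo-reflection has age \(<1\). The Reid--Tai criterion, valid for tame linear quotients in characteristic \(p\) with \(|T|\) prime to \(p\), then shows the quotient is terminal. Terminality is local in the étale topology and survives completion, so \(W\) is terminal.

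\emph{The singular point.} Take \(u=x_{d-2}\), so \(w=2\) and \(\rho^2=u\). Write \(\ell\) for the index of the block \(V_d\). I would choose the geometric point \(\bar w\) on \(F\cap D_+(H_u)\) in the lift chart to be the origin of the root cover. At this point \(\rho=0\), the normalized coordinates \(Z_z\) all vanish, \(Z_{x_{\ell,1}}\) for the weight-one coordinate \(x_{\ell,d-1}\) is also \(0\), and \(\bar Q_i=1\). The last condition is forced if \(\sigma\) is to fix the point, and it is compatible with Lemma~\ref{lem:hb-root-charts-03}. The point needs \(u\) to be a unit relative to \(\rho^2\), so I would take the root point with \(S_0=1\) and every other \(S_\ell\) and \(Z_z\) set to \(0\).

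At this point the inertia group must be all of \(C_p\rtimes\mu_2\). Here \(\mu_2\) acts by \(\rho\mapsto-\rho\) and \(Z_z\mapsto(-1)^{\operatorname{wt}(z)}Z_z\), so it fixes the chosen point. The point is also \(\sigma\)-fixed because \(\sigma(\rho)=\rho Q_1\) and the coordinate shifts vanish there. Lemma~\ref{lem:hb-character-survival} then gives a regular system of parameters \(\widetilde\rho,\widetilde N_\Theta,\widetilde Z_j\) of \(\widehat A=\widehat B^{P}\) consisting of \(T\)-eigenvectors. The nontrivial element of \(\mu_2\) acts on these by \(-1\), \((-1)^p=-1\), and \((-1)^{\operatorname{wt}(z_j)}\).

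Set \(M\) to be the number of \(-1\) eigenvalues and \(r_0\) the number of \(+1\) eigenvalues, and put \(v_i\) for the \(+1\) parameters and \(z_j\) for the \(-1\) parameters. The ring is then \(k[[v]]\widehat\otimes k[[z]]^{\mu_2}\). For the bound on \(M\), the count in the \(m=2\) case of Lemma~\ref{lem:hb-age} applies: \(O(V)\ge3\) odd weights together with \(\rho\) give at least four sign-changed directions. One of these may be replaced by \(N_\Theta\), but \(N_\Theta\) is also odd. This yields \(M\ge4\).

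Finally, \(k[[z_1,\ldots,z_M]]^{\mu_2}\) with \(M\ge2\) is not regular. Its embedding dimension is \(\binom{M+1}{2}>M\), which already exceeds its dimension. Hence \(W\) is singular.

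\emph{Main obstacle.} The hard step is verifying that the inertia group at the chosen point really is \(C_p\rtimes\mu_2\) rather than \(\mu_2\) alone, and that the eigenvalue bookkeeping after removing \(\Theta\) in favour of \(N_\Theta\) still produces at least four odd directions. I would check this first by writing out the weight-two block explicitly: \(u=x_{d-2}\), \(x_{d-1}=S_1\rho\), \(x_d=S_2\). Then compute \(\varepsilon_\ell\) at the origin, where \(\bar\varepsilon_1=\bar S_0=1\), so \(\Theta\) comes from \(S_1\), which is odd.
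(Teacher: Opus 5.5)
Your proposal is correct and follows essentially the paper's route: terminality via Lemma~\ref{lem:hb-root-charts-03}, Lemma~\ref{lem:hb-age} and the Reid--Tai criterion, and singularity at the origin of the square-root chart (\(Q_i=1\), \(\rho=0\), all normalized coordinates zero). There the inertia contains \(C_p\rtimes\mu_2\), and Lemma~\ref{lem:hb-character-survival} supplies the odd directions \(\widetilde\rho\), \(\widetilde N_\Theta\) and the remaining odd-weight coordinates. The differences are minor: you get \(M=O(V)+1\ge4\) uniformly from \(O(V)\) being odd and \(\ne1\) (as in Lemma~\ref{lem:hb-age}) rather than the paper's split into a block of size at least four versus \(V_3^{\oplus a}\oplus V_2^{\oplus b}\oplus V_1^{\oplus c}\) with \(3a+b=p\), and you make non-regularity explicit via the embedding dimension \(\binom{M+1}{2}>M\).
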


\begin{proof}
Lemma~\ref{lem:hb-root-charts-03} reduces each completed geometric local ring to
a tame quotient of a regular local ring.  Lemma~\ref{lem:hb-age} shows that every such quotient is either regular (if the tame group \(T\) is trivial) or terminal
\cite{Reid1987YoungPerson}.  Hence \(W\) is
terminal.

For singularity, take the square-root chart with \(u=\rho^2\). For the complete local factor, choose the geometric point with \(Q_i=1\), \(\rho=0\), and all
root-normalized \(\mu_2\)-eigen parameters equal to zero.
The formulas in Lemma~\ref{lem:hb-root-charts-03} show that this point is fixed by
the standard lift of \(C_p\) and by the root involution.  Its inertia therefore
contains \(C_p\rtimes\mu_2\).  Write the adjacent coordinate of weight \(1\) as
\(x_{d-1}=\rho S\).  On the associated graded ring, the graded action gives
\[
  \sigma_0(\rho)=\rho,
  \qquad
  \sigma_0(S)=S+\rho.
\]
And \(\mu_2\) acts by \(\rho \mapsto -\rho\), \(S\mapsto -S\).
By Lemma~\ref{lem:hb-character-survival}, the \(C_p\)-invariant local subring has two independent classes with non-trivial \(\mu_2\)-action, whose initial forms are
\[
  \rho,\qquad N_S=S^p-\rho^{p-1}S.
\]
We count two further sign directions.  If some block has size at least four,
then the number \(O(V)\) of odd positive Jordan weights is at least two and,
because \(O(V)\equiv D_V\equiv1\pmod2\), at least three.  One odd direction
is the selected direction from \(x_{d-1}\), which is already represented by \(N_S\); at
least two other odd normalized coordinates remain.  Together with \(\rho\)
and \(N_S\), they give at least four sign directions.

It remains to consider the case in which every nontrivial block has size two
or three.  Write
\[
  V=V_3^{\oplus a}\oplus V_2^{\oplus b}\oplus V_1^{\oplus c},
  \qquad a\ge1.
\]
The condition \(D_V=p\) is \(3a+b=p\).  Since \(p\ge5\) is prime, \(b>0\). In particular, \(b>1\) if \(a=1\). Therefore, \(a+b\geq 3\).
On the selected root chart, the sign directions are \(\rho,N_S\), the
weight-one coefficients of the other \(a-1\) copies of \(V_3\), and the
\(b\) weight-one coordinates from the \(V_2\)-summands.  Their number is
\[
  M=2+(a-1)+b=a+b+1\ge4.
\]

Because \(2\) is invertible in \(k\), the involution on the complete regular local \(C_p\)-quotient can be linearized.
The completed local ring is therefore the displayed sign quotient.  Its
nontrivial element has age \(M/2\ge2\), so the local ring is terminal; it is
singular because the sign action is nontrivial.
\end{proof}

\subsection{Exceptional divisor and the valuation model}

\begin{lemma}\label{lem:hb-support}
The morphism \(f:W\to X\) is projective and birational.  It has exactly one
exceptional prime divisor, namely \(F=q(E)\).
\end{lemma}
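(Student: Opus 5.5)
I will follow the pattern of Proposition~\ref{prop:all-v2-smooth-projective} and Proposition~\ref{prop:all-v2-unique-exceptional}, now for the general weighted Rees algebra \(\calS(V)\).

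\textbf{Projectivity.} Since \(S\) is finitely generated and \(\calS(V)\) is generated over \(S\) by the finitely many elements \(z\ttt^a\) with \(1\le a\le\operatorname{wt}(z)\), the algebra \(\calS(V)\) is a finitely generated \(k\)-algebra. By Noether's theorem for finite groups, \(\calS(V)^G\) is finitely generated and \(\calS(V)\) is finite over it. Its degree-zero part is \(S^G=R\). A Veronese subalgebra \(\bigoplus_m \calS(V)^G_{mN'}\) is generated in degree one, so \(W\) is projective over \(X\).

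\textbf{Finiteness of \(q\).} The orbit-polynomial argument of Proposition~\ref{prop:all-v2-smooth-projective} gives \(v^p\in(\calS(V)^G)_+\calS(V)\) for every homogeneous \(v\) of positive degree. Hence \(q\colon Y\to W\) is finite and surjective, and \(Y\to W\) is a \(G\)-quotient.

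\textbf{Birationality.} The weighted blowup \(g\) is birational, since it is an isomorphism away from the locus where all positive-weight coordinates vanish. Away from \(E\), the charts of \(Y\) and \(V\) agree \(G\)-equivariantly. To compare function fields I will use Lemma~\ref{lem:all-v2-invariant-localization} with \(u=H_u\) for some positive-weight coordinate \(u\). On \(D_+(H_u)\), inverting the degree-zero element \(H_u/\ttt^{pw}=\prod_i\sigma^i(u)\) shows
\[
\left((\calS(V)_{H_u})_0\right)[u^{-1}] = S_{N(u)}.
\]
Here \(N(u)=\prod_i\sigma^i(u)\) is invariant, so taking invariants gives \(k(W)=Q(S)^G=k(X)\).

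\textbf{Uniqueness of the exceptional divisor.} First I check that \(F\) is exceptional. Since \(q\) is finite, \(F=q(E)\) is a prime divisor. The center \(g(E)\) is the linear subspace \(Z\) cut out by the positive-weight coordinates, of codimension equal to the number of such coordinates, \(\sum_i(d_i-1)\). Because no pseudo-reflections occur, \(D_V\ge2\), so either some \(d_i\ge3\) or there are at least two blocks \(V_2\); either way this codimension is at least \(2\). The finite map \(\pi\) preserves dimension, so \(f(F)=\pi(Z)\) has codimension at least \(2\).

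Conversely, let \(D\subset W\) be \(f\)-exceptional. Choose a prime divisor \(D'\subset Y\) over \(D\). Then \(g(D')\subset\pi^{-1}(f(D))\), which has codimension at least \(2\) in \(V\), so \(D'\) is \(g\)-exceptional. It remains to see that \(E\) is the only exceptional prime of \(Y\to V\), i.e.\ that \(\operatorname{Exc}(g)\) is irreducible. On the toric description \(\calS(V)=k[\Gamma]\), the map \(Y\to V\) is the toric weighted blowup of \(\mathbb A^{N}\) along the single ray with weight vector \((w_i)\), so its exceptional locus is the single torus-invariant divisor of that ray. Hence \(D'=E\) and \(D=F\).

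The only slightly delicate step is this irreducibility claim. Should the toric identification need justification, I will verify it chartwise instead: on each \(D_+(z\ttt^{\operatorname{wt} z})\), the exceptional locus is \(V(z\ttt^{\operatorname{wt}z}/\ttt^{\operatorname{wt}z})\) up to the \(\mu_w\)-quotient of Lemma~\ref{lem:hb-root-charts-02}, i.e.\ \(\rho=0\) on \(P_u\), which is irreducible. These charts glue to the single divisor \(E\).
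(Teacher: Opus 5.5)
Your proof is correct and follows the paper's argument essentially step for step. You use finite generation plus a Veronese subalgebra for projectivity, the orbit-polynomial radical argument for finiteness and surjectivity of \(q\), and the codimension/lifting argument that reduces uniqueness of \(F\) to the irreducibility of \(\operatorname{Exc}(g)\). The differences are only in the sub-justifications:
\begin{itemize}
\item You check birationality on a chart \(D_+(H_u)\) after inverting \(N(u)\), via Lemma~\ref{lem:all-v2-invariant-localization}, rather than through \(Q(\calS(V)^G)=Q(\calS(V))^G\).
\item You get irreducibility of \(\operatorname{Exc}(g)\) from the toric star subdivision, rather than from \(E=\Proj\bigl(\bigoplus_m I_m/I_{m+1}\bigr)\) being a weighted projective bundle over \(Z\).
\item Your bound \(\operatorname{codim}Z\ge2\) comes from \(D_V\ge2\) (no pseudo-reflections and a nontrivial action) instead of the paper's appeal to \(D_V=p\). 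This suits the lemma's general setting slightly better.
\end{itemize}
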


\begin{proof}
We follow the proofs of Proposition~\ref{prop:all-v2-smooth-projective} and
Proposition~\ref{prop:all-v2-unique-exceptional}.

Every homogeneous element of \(\calS(V)\) satisfies its orbit polynomial over
\(\calS(V)^G\), so \(\calS(V)\) is finite over \(\calS(V)^G\).  The invariant
algebra is finitely generated over its degree-zero part \(R\), and after
passage to a positive Veronese subalgebra it is generated in degree one.
Hence \(W\) is projective over \(X\).  As in the proof of
Proposition~\ref{prop:all-v2-smooth-projective}, the irrelevant ideal of
\(\calS(V)\) is the radical of the ideal generated by \(\calS(V)^G_+\), so the
finite extension induces a finite surjective morphism \(q:Y\to W\).  The
function field of \(W\) is the degree-zero part of
\(Q(\calS(V)^G)=Q(\calS(V))^G\), which is \(k(V)^G=k(X)\).  Hence \(f\) is
birational.

The center of the weighted blowup is the linear subspace
\(Z\subset V\) cut out by the coordinates of positive weight.  It is
irreducible, and its codimension is the number of such coordinates, which is
at least two since \(D_V=p\ge2\).  The exceptional locus of \(g\) is the
Cartier divisor cut out by \(I_1\mathcal O_Y\), whose reduction is the
irreducible divisor \(E=\Proj\bigl(\bigoplus_{m}I_m/I_{m+1}\bigr)\), a weighted
projective bundle over \(Z\).  Since \(q\) is finite, \(F=q(E)\) is a prime
divisor of \(W\), and \(f(F)=\pi(Z)\) has codimension at least two in \(X\).
Thus \(F\) is \(f\)-exceptional.  Conversely, if \(D\subset W\) is an
\(f\)-exceptional prime divisor, a prime divisor \(D'\subset Y\) lies over
\(D\) since \(q\) is finite and surjective, and
\(g(D')\subset\pi^{-1}(f(D))\) has codimension at least two in \(V\).  Hence
\(D'\) is \(g\)-exceptional, so \(D'=E\) and \(D=F\).
\end{proof}

\begin{proposition}\label{prop:hb-valuation-proj}
When \(D_V=p\), the model \(f:W\to X\) is crepant.  If we denote \(\nu_F=\ord_F\), then
\[
  I_m\cap R=\aideal_m(\nu_F)
  :=\{h\in R:\nu_F(h)\ge m\}
\]
for every \(m\ge0\), and
\[
  W\simeq
  \Proj_X\bigoplus_{m\ge0}\aideal_m(\nu_F)\ttt^m.
\]
\end{proposition}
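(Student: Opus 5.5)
Crepancy is immediate from what is already proved. By Lemma~\ref{lem:hb-support}, \(F\) is the unique \(f\)-exceptional prime divisor, so \(K_W-f^*K_X\) is supported on \(F\). By Proposition~\ref{prop:hb-coefficient} its coefficient is \(D_V-p=0\). Both \(W\) and \(X\) are normal, with \(X\) \(\mathbb Q\)-Gorenstein (indeed Gorenstein, since \(\pi\) is étale in codimension one with \(K_V=0\)). Hence \(K_W=f^*K_X\), i.e.\ \(f\) is crepant.

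For the identification \(I_m\cap R=\aideal_m(\nu_F)\), I would compare valuations through the diagram \(q\colon Y\to W\). Let \(\nu_E=\ord_E\) on \(k(V)\). For a monomial \(x^\alpha\), the weighted blowup gives \(\nu_E(x^\alpha)=\sum_i w_i\alpha_i\). Since \(\nu_E\) is the monomial valuation for the weights \(w_i\), one gets
\[
  \nu_E(h)=\min\{\operatorname{wt}(x^\alpha)\colon x^\alpha \text{ appears in } h\}.
\]
Therefore \(I_m=\{h\in S:\nu_E(h)\ge m\}\). By Lemma~\ref{lem:hb-tail-dvr}, \(e(E/F)=1\), so \(\nu_E|_{k(X)}=\nu_F\). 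Intersecting with \(R\) gives \(I_m\cap R=\aideal_m(\nu_F)\).

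Next I would identify the invariant Rees algebra with the valuation algebra. Because \(G\) fixes \(\ttt\) and acts degreewise,
\[
  \calS(V)^G=\bigoplus_{m\ge0} I_m^G\,\ttt^m=\bigoplus_{m\ge0}(I_m\cap R)\,\ttt^m,
\]
and this is exactly \(\bigoplus_m\aideal_m(\nu_F)\ttt^m\). Taking \(\Proj\) over \(R\) gives \(W\simeq\Proj_X\bigoplus_m\aideal_m(\nu_F)\ttt^m\) by the definition of \(W\). Finite generation of this algebra over \(R\) was already noted in Lemma~\ref{lem:hb-support}, so the relative \(\Proj\) is projective over \(X\).

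The main point to check carefully is the equality \(\nu_E=\) the monomial weight valuation. I would verify it on the chart \(D_+(u\ttt^w)\) with \(u\) of weight one, for example \(r=x_{d-1}\). There \(E=\{r=0\}\) and \(z=Z_z r^{\operatorname{wt}(z)}\), with the \(Z_z\) algebraically independent modulo \(r\). The lowest-weight part of \(h\) then becomes \(r^m\) times a polynomial in the \(Z_z\) that is nonzero modulo \(r\).

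The second point is that ramification index one gives \(\ord_F=\ord_E|_{k(X)}\) without rescaling. This is exactly where Lemma~\ref{lem:hb-tail-dvr} is used, and it is what makes the grading by \(m\) on both sides agree.
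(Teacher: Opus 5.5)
Your proposal is correct and is essentially the paper's proof. Crepancy comes from the uniqueness of \(F\) together with the coefficient \(D_V-p\); then \(e(E/F)=1\) identifies \(\ord_E|_{k(X)}\) with \(\nu_F\); finally one reads off the graded pieces \((I_m\cap R)\ttt^m\) of the invariant Rees algebra. Your chart computation on \(D_+(r\ttt)\) spells out what the paper compresses into ``construction of the weighted Rees algebra.''

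One side remark is inaccurate: \(X\) is in general not Gorenstein. For \(D_V=p\ge5\) the fixed locus has codimension at least three, so \(k[V]^{C_p}\) is not Cohen--Macaulay by Ellingsrud--Skjelbred. All you need, though, is that \(K_X\) is Cartier (indeed trivial), and your \'etale-in-codimension-one argument gives exactly that.
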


\begin{proof}
By Lemma~\ref{lem:hb-support}, the relative canonical divisor is supported on
\(F\).  Proposition~\ref{prop:hb-coefficient} gives discrepancy zero, hence
\(K_W=f^*K_X\).

The only height-one point of \(Y\) above the generic point of \(F\) is the
generic point of \(E\).  Lemma~\ref{lem:hb-tail-dvr} gives ramification index
one, so
\[
  \ord_E(h)=\ord_F(h)
\]
for \(h\in k(X)^\times\).  For \(h\in R\subset S\), construction of the
weighted Rees algebra gives
\[
  h\in I_m\quad\Longleftrightarrow\quad\ord_E(h)\ge m.
\]
This proves the asserted equality of ideals.  Since the group fixes the variable \(\ttt\),
\[
  (\calS(V)^G)_m=(I_m\cap R)\ttt^m.
\]
\end{proof}

\subsection{Uniqueness of projective crepant model}

\begin{lemma}\label{lem:hb-factorial}
The quotient \(X\) is normal and factorial.
\end{lemma}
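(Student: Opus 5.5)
Normality is immediate. \(S=k[V]\) is a polynomial ring, hence a normal domain, and the invariant ring \(R=S^G\) of a normal domain under a finite group is normal, because \(R=S\cap Q(S)^G\). So the substance of the lemma is factoriality. I will prove it by showing that the divisor class group of \(R\) vanishes, using the standard comparison between \(\operatorname{Cl}(R)\) and group cohomology for a finite group acting on a UFD.

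The first step uses the hypothesis that \(G\) has no pseudo-reflections, so \(\pi\colon V\to X\) is étale in codimension one, as already used in Proposition~\ref{prop:all-v2-canonical-divisor}. For a finite group acting on a UFD with no pseudo-reflections there is the classical exact sequence of Samuel (also found in Benson's and Fossum's treatments):
\[
  0\longrightarrow \operatorname{Cl}(S^G)\longrightarrow H^1(G,S^\times).
\]
Concretely, a divisorial ideal \(\mathfrak a\subset R\) extends to a reflexive rank-one ideal of \(S\), and this extension is principal, say \(fS\), because \(S\) is a UFD. For each \(g\in G\) we then have \(g(f)=\chi(g)f\) with \(\chi(g)\in S^\times\), and \(\chi\) is a \(1\)-cocycle. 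The absence of ramification in codimension one is exactly what makes the map \([\mathfrak a]\mapsto[\chi]\) injective. The steps in order are therefore: (1) recall or reprove this injection, checking that the no-pseudo-reflection hypothesis is what is needed; (2) compute \(S^\times=k^\times\) with trivial \(G\)-action; (3) conclude that
\[
  H^1(G,k^\times)=\operatorname{Hom}(C_p,k^\times)=1,
\]
because \(k^\times\) has no elements of order \(p\) in characteristic \(p\). Hence \(\operatorname{Cl}(R)=0\), and \(R\) is a UFD.

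The main obstacle is step (1), specifically the injectivity. Suppose \(\chi\) is a coboundary, so that after rescaling \(f\) by a constant we may take \(f\) to be \(G\)-invariant. We must then show that \(\mathfrak a=fS\cap R=fR\). The inclusion \(fR\subseteq fS\cap R\) is clear. For the reverse, if \(fs\in R\), then \(s=fs/f\) is invariant, so \(s\in R\). The remaining point is the equality \(\mathfrak a=fS\cap R\), that is, that the extension-and-contraction of a divisorial ideal recovers it. This holds because \(\pi\) is unramified at height-one primes. At such a prime \(\mathfrak q\subset R\), all ramification indices of the primes of \(S\) above \(\mathfrak q\) equal \(1\), so \(v_{\mathfrak q}\) restricted from \(S\) agrees with valuations on \(R\), and divisorial ideals correspond bijectively to \(G\)-stable divisorial ideals of \(S\). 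I would write this valuation comparison out explicitly, since it is the only place the wild characteristic could cause trouble. Here only the ramification index matters, and it is \(1\) because the inertia group at a height-one prime is generated by pseudo-reflections, and there are none.

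If this cohomological route is considered too heavy, an alternative is the direct argument. Take a height-one prime \(\mathfrak q\) of \(R\) and a prime \(\mathfrak P=(f)\) of \(S\) lying over it. Form the orbit product \(h=\prod_{g\in G/\mathrm{Stab}}g(f)\), which is invariant up to a character \(G\to k^\times\), and that character is trivial since \(p\) is the characteristic. Then \(h\in R\), and unramifiedness in codimension one gives \(\mathfrak q=hR\). I would present this elementary version as the proof, since it only uses the fact that \(\operatorname{Hom}(C_p,k^\times)\) is trivial together with the absence of pseudo-reflections.
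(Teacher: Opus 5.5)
Your proof is correct and is essentially the paper's argument. The paper pulls a Weil divisor \(D\) back to \(V\) and writes \(\pi^*D=\operatorname{div}_V(h)\). It then notes that \(g\mapsto gh/h\) is a character \(C_p\to k^\times\), hence trivial, and concludes \(D=\operatorname{div}_X(h)\) by injectivity of \(\pi^*\); this is exactly your cocycle/orbit-product computation, phrased with divisors instead of ideals.

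One minor correction: neither the injectivity \(\operatorname{Cl}(R)\hookrightarrow H^1(G,S^\times)\) nor the equality \(hS\cap R=\mathfrak q\) actually needs unramifiedness. For \(x\in Q(R)\) one has \(v_{\mathfrak P}(x)=e\,v_{\mathfrak q}(x)\), so contraction recovers divisorial ideals in general; it is surjectivity in Samuel's theorem that uses \(e=1\). So the no-pseudo-reflection hypothesis is harmless, but your argument does not hinge on it.
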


\begin{proof}
The invariants of a normal domain under a finite group
are normal \cite{CampbellWehlau2011ModularInvariantTheory}.  

\(\pi:V\to X\) is finite and \'etale in codimension one.
Let \(D\) be a Weil divisor on \(X\).  Since \(V\) is factorial,
\(\pi^*D=\operatorname{div}_V(h)\) for some \(h\in k(V)^\times\).  For
\(g\in C_p\), since \(\pi^*D\) is \(g\)-invariant and \(\operatorname{div}_V(gh)\sim \operatorname{div}_V(h)\), the quotient \(gh/h\) is a unit of \(k[V]\), which defines a character \(C_p\to k^\times\).  Since the character has to be trivial, \(h\) is invariant. Thus \(\pi^*\operatorname{div}_X(h)=\operatorname{div}_V(h)=\pi^*D\). It follows that \(D=\operatorname{div}_X(h)\), so \(X\) is factorial.
\end{proof}

\begin{theorem}\label{thm:hb-rigidity}
Assume that \(D_V=p\) and that \(W\) is terminal.  Let
\[
  g:Z\longrightarrow X
\]
be a projective crepant birational morphism from a normal variety.
If \(g\) is not an isomorphism, then it is isomorphic over \(X\) to
\(W\to X\).
\end{theorem}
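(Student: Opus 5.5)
Our approach is to show that every exceptional prime divisor of \(g\) must be \(F\), and then to reconstruct \(Z\) from this single divisorial valuation, exactly as \(W\) was reconstructed in Proposition~\ref{prop:hb-valuation-proj}.

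\textbf{Step 1: \(g\) has an exceptional divisor.} By Lemma~\ref{lem:hb-factorial}, \(X\) is factorial. A projective birational morphism \(g\colon Z\to X\) that is not an isomorphism therefore has exceptional locus of pure codimension one. Indeed, write \(Z=\Proj_X\bigoplus_m g_*\mathcal O_Z(mA)\) for a \(g\)-ample \(A\). If \(g\) were small, then \(g_*A\) would be Cartier on \(X\), and pulling it back would make \(A\) relatively trivial up to a Cartier divisor, which forces \(Z\simeq X\). So \(g\) has at least one exceptional prime divisor \(D\).

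\textbf{Step 2: every exceptional divisor is \(F\).} Since \(g\) is crepant, \(a(D;X)=0\). We compare with \(W\). Because \(W\to X\) is crepant (Proposition~\ref{prop:hb-valuation-proj}) and \(W\) is terminal, we get \(a(D;W)=a(D;X)=0\) for every divisor \(D\) over \(W\). Terminality of \(W\) forces every divisor exceptional over \(W\) to have positive discrepancy. Hence \(D\) is not exceptional over \(W\): its center on \(W\) is a divisor. Since \(D\) is exceptional over \(X\), that divisor must be \(f\)-exceptional, and by Lemma~\ref{lem:hb-support} it is \(F\). Therefore \(\ord_D=\nu_F\), and \(g\) has exactly one exceptional prime divisor, \(D=F_Z\), the strict transform realizing \(\nu_F\).

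\textbf{Step 3: identify \(Z\) with the valuation Proj.} Take a \(g\)-ample divisor \(-F_Z\). This requires arguing that \(-F_Z\) is \(g\)-ample. Since \(X\) is factorial and \(\operatorname{Exc}(g)=F_Z\), every \(g\)-ample Cartier divisor \(A\) is linearly equivalent over \(X\) to \(g^*(\text{principal})-cF_Z\). By the negativity lemma applied to \(A\equiv_X -cF_Z\), we get \(c>0\), so \(-F_Z\) is \(\mathbb Q\)-Cartier and \(g\)-ample. Then
\[
  Z\simeq\Proj_X\bigoplus_{m\ge0}g_*\mathcal O_Z(-mF_Z)
  =\Proj_X\bigoplus_{m\ge0}\aideal_m(\nu_F)\ttt^m,
\]
where the equality uses normality of \(Z\) together with \(g_*\mathcal O_Z=\mathcal O_X\). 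Up to a Veronese subring, which does not change Proj, this is \(W\) by Proposition~\ref{prop:hb-valuation-proj}, and the isomorphism is compatible with the maps to \(X\).

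The main obstacle is the ampleness and \(\mathbb Q\)-Cartier issue in Step 3, namely making precise that factoriality of \(X\) forces the relatively ample class to be a negative multiple of \(F_Z\). The remaining steps are routine uses of terminality and of Lemma~\ref{lem:hb-support}.
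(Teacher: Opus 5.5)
Your proposal is correct and follows essentially the same route as the paper. You show that \(\nu_F\) is the only exceptional valuation of discrepancy zero (by terminality of \(W\)), that \(g\) has an exceptional divisor (by factoriality of \(X\)), that \(-F_Z\) is relatively ample (by the negativity lemma), and then identify \(Z\) with the valuation Proj of Proposition~\ref{prop:hb-valuation-proj}. One small point to add in Step 3: the negativity lemma alone gives only \(c\ge0\), and \(c\neq0\) follows from your Step 1 argument (a relatively trivial \(A\) cannot be \(g\)-ample when \(g\) is not an isomorphism), which is exactly how the paper handles it.
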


\begin{proof}
Since \(W\) is terminal and crepant, and \(F\) is its only exceptional prime,
\(\nu_F\) is the unique exceptional divisorial valuation of discrepancy zero
over \(X\).  Indeed, realize any such valuation \(\nu\) by a prime divisor \(F'\) on a normal model \(Z\to W\). If \(F'\) is exceptional over \(W\), then its discrepancy \(a(F';W)>0\) by the terminality of \(W\), which leads to \(a(F';X)>0\). Hence \(F'\) is not exceptional over \(W\) but exceptional over \(X\), which forces \(F'=F\) and \(\nu=\nu_F\).  

Choose a \(g\)-ample Cartier divisor \(H\). We first claim that the morphism \(g\) has an
exceptional prime divisor.  Otherwise, set \(D=g_*H\). Since \(X\) is factorial, we obtain that \(D\) is Cartier and \(H=g^*D\).  Since \(g\) is not an isomorphism, it has a
positive-dimensional projective fiber: otherwise it is finite birational onto the normal variety \(X\), which leads to an isomorphism. Take a projective curve \(C\) contained in such a fiber. Then \(H\cdot C=(g^*D)\cdot C=0\), which contradicts the \(g\)-ampleness
of \(H\).

Thus \(g\) has an exceptional prime divisor. Every such \(g\)-exceptional prime divisor has discrepancy zero and therefore defines
\(\nu_F\).  Distinct prime divisors define distinct normalized divisorial valuations, so
\(g\) has exactly one exceptional prime \(F_Z\), with
\(\ord_{F_Z}=\nu_F\).  Again put \(D=g_*H\).  Since \(D\) is Cartier,
\[
  H-g^*D=aF_Z
\]
for some integer \(a\), and \(aF_Z\) is Cartier. \(a\) is nonzero by the same statement that we used to prove the existence of exceptional divisors. In particular, \(F_Z\) is \(\mathbb{Q}\)-Cartier.  

If \(a>0\), the
effective exceptional Cartier divisor \(aF_Z\) would be \(g\)-ample and
therefore \(g\)-nef. Then \(F_Z\) is a \(g\)-nef \(\mathbb{Q}\)-Cartier divisor, which contradicts the negativity lemma
\cite[3.39]{KollarMori1998Birational}, since \(F_Z\) is effective and exceptional.  

Hence \(a<0\) and
\(aF_Z\) is \(g\)-ample. Choose an integer \(\ell\gg0\) and set \(r=-a\ell>0\), so that \(-rF_Z=\ell(aF_Z)\) is \(g\)-very ample. Since \(Z\to X\) is projective, the relative Proj gives
\[
  Z\simeq
  \Proj_X\bigoplus_{m\ge0}g_*\mathcal O_Z(-mrF_Z).
\]
Since \(g_*\mathcal O_Z=\mathcal O_X\) and \(\ord_{F_Z}=\nu_F\),
\[
  g_*\mathcal O_Z(-mrF_Z)=\aideal_{mr}(\nu_F).
\]
Indeed, a rational function is a section of \(\mathcal O_Z(-mrF_Z)\) exactly when its order by \(\nu_F\) is at least \(mr\) and it is regular away from the exceptional prime divisor. Again by normality, the regularity away from \(F_Z\) is the same as the regularity on \(X\). 

Now, \(Z\) is the relative Proj of a positive Veronese subalgebra of the valuation algebra in
Proposition~\ref{prop:hb-valuation-proj}, while positive Veronese subalgebras have
the same relative Proj. Since \(Z\) is already normal, we obtain \(Z\simeq W\) over \(X\).
\end{proof}

\begin{proof}[Proof of Theorem~\ref{thm:higher-block}]
It is a combination of Proposition~\ref{prop:hb-terminal-singular} and Theorem~\ref{thm:hb-rigidity}.
\end{proof}

\begin{remark}\label{rem:V3-unique}
By Theorem~\ref{thm:all-v2} and Propostion~\ref{prop:hb-terminal-singular}, if \(D_V=p\), \(W\) is not terminal only when \(p=3\) and \(V=V_3\) after removing the trivial summands. A crepant resolution in this case is discussed in Remark~\ref{rem:V3-chart-age}, which is obtained by a crepant blowup \(\widetilde{W}\to W\). This blowup resolves the \(A_1\)-singularities of our model \(W\). Let \(F'\) be the unique exceptional prime divisor of \(\widetilde{W}\to W\), and identify \(F\) with its strict transform on \(\widetilde{W}\). Yasuda's computation shows that \(F\) and \(F'\) are universally homeomorphic to \(\mathbb{A}^1\times \mathbb{P}^1\), and \(F\cap F'\simeq \mathbb{A}^1\). Let \(\ell,\ell'\simeq \mathbb{P}^1\) be chosen fibers of \(F,F'\) as \(\mathbb{P}^1\)-fibers respectively. They are contracted by the resolution \(\widetilde{W}\to W\). Because a morphism from a complete
integral curve to $\mathbb A^1$ is constant and
$F\cap F'$ contains no complete curves, 
\[
\overline{\operatorname{NE}}(\widetilde{W}/X)
=
\mathbb R_{\geq0}[\ell]
+
\mathbb R_{\geq0}[\ell'].
\]
The similar argument as the proof of Theorem~\ref{thm:hb-rigidity} shows that \(\nu_F\) and \(\nu_{F'}\) are the only exceptional divisoral valuations of discrepancy zero over \(X\). We then obtain another projective crepant birational model
\[
W':=
  \Proj_X\bigoplus_{m\ge0}\mathfrak{a}_m 
  ( \nu_{F'} )
  \ttt^m
\]
with a unique exceptional divisor \(F'\) again by abuse of notation. 
The projective crepant birational models satisfy the following commutative diagram.
\[
\begin{tikzcd}
& \widetilde{W} \arrow[dr] \arrow[dl] & \\
W  \arrow[dr] & & W' \arrow[dl] \\
& X  &
\end{tikzcd}
\]
\end{remark}

We briefly explain that the projective crepant birational models in the diagram of Remark~\ref{rem:V3-unique} are all possible ones. In this case, consider an arbitrary non-isomorphic projective crepant birational model \(g:Z\to X\). In parallel with the proof of Theorem~\ref{thm:hb-rigidity}, we can similarly find exceptional prime divisors \(F_Z\) and \(F_Z'\) (one of them may not occur on \(Z\)), and furthermore a \(g\)-very ample Cartier divisor \(D_Z=\ell(H-g^*g_*H)=-aF_Z-bF'_Z\) (\(a,b\geq 0, (a,b)\neq (0,0)\)). 

If one of \(F_Z\) and \(F'_Z\) does not occur on \(Z\), then we can take either \(a=0\) or \(b=0\), and then obtain \(Z\simeq W'\) or \(Z\simeq W\) by passing to the relative Proj and the valuation model, as the proof of Theorem~\ref{thm:hb-rigidity}. 

If both \(F_Z\) and \(F'_Z\) occur, we obtain a birational map \(
\varphi:\widetilde{W} \dashrightarrow Z
\) that is isomorphic in codimension one, and \(\widetilde{D}\), the strict transform of \(D_Z\) on \(\widetilde{W}\). Since \(D_Z\) is \(g\)-very ample, \(\widetilde{D}\cdot \ell\geq0\) and \(\widetilde{D}\cdot \ell'\geq0\). Indeed, \(\widetilde{D}\cdot \ell=0\) or \(\widetilde{D}\cdot \ell'=0\) implies that \(\varphi\) contacts the corresponding exceptional divisor, which is a contradiction. Hence, \(\widetilde{D}\) is relatively ample for the model \(\tilde{f}:\widetilde{W}\to X\). Then by passing to a positive Veronese subalgebra if necessary, we have
\[
Z\simeq
  \Proj_X\bigoplus_{m\ge0}g_*\mathcal O_Z(mD_Z)
  \simeq
  \Proj_X\bigoplus_{m\ge0}\widetilde{f}_*\mathcal O_{\widetilde{W}}(m\widetilde{D})
  \simeq \widetilde{W}.
\]

Therefore, when \(p=3\) and \(V=V_3\), the four models \(\widetilde{W},W,W',X\) discussed in Remark~\ref{rem:V3-unique} give the full list of projective crepant birational models of \(X\). Since \(\widetilde{W}\) is the only smooth model among them (\(W'\) is singular: otherwise the discrepancy of \(F\) on it is positive), it is the unique projective crepant resolution of \(V_3/C_3\). Together with Theorem~\ref{thm:hb-rigidity} for the cases when \(W\) is terminal, we have the following corollary on the uniqueness of projective crepant resolutions.

\begin{corollary}\label{cor:unique-pcr}
   Assume that \(D_V=p\). If \(X\) admits projective crepant resolutions \(Y,Y'\), then \(Y\simeq Y'\) over \(X\).
\end{corollary}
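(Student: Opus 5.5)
The argument splits by the isomorphism type of \(V\) after the trivial summands are removed. Trivial summands only contribute a factor \(\mathbb A^a\), and the valuation models and relative Proj constructions are compatible with this product, so we may assume there are none. A crepant resolution exists only if \(D_V=p\), so this is the only case to treat. It divides into three mutually exclusive subcases: (a) \(V\simeq V_2^{\oplus p}\); (b) \(p=3\) and \(V\simeq V_3\); (c) every remaining case, namely some block of size \(d\ge3\) with \(p\ge5\). For \(p=3\), the condition \(D_V=3\) with a block of size at least \(3\) forces \(V=V_3\), and for \(p=2\) it forces \(V=V_2^{\oplus2}\). The goal is to show that in each subcase any two projective crepant resolutions are isomorphic over \(X\).

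In subcase (c), Proposition~\ref{prop:hb-terminal-singular} shows that \(W\) is terminal, and Theorem~\ref{thm:hb-rigidity} then says every non-isomorphic projective crepant model is \(W\). Since \(W\) is singular and \(X\) is singular (it has no pseudo-reflections and is wild), there is no resolution at all, so the uniqueness statement holds vacuously. In subcase (b), the discussion after Remark~\ref{rem:V3-unique} lists all projective crepant birational models as \(\widetilde W, W, W', X\). Among these only \(\widetilde W\) is smooth, so any \(Y,Y'\) are both isomorphic to \(\widetilde W\).

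Subcase (a) needs an argument, since Theorem~\ref{thm:hb-rigidity} was stated under the hypothesis that \(W\) is terminal. Here \(W\) is smooth by Proposition~\ref{prop:all-v2-smooth-projective}, and smooth varieties are terminal. Moreover \(D_V=p\), and \(W\) is crepant by Proposition~\ref{prop:all-v2-canonical-divisor}. Every input of the proof of Theorem~\ref{thm:hb-rigidity} is available: \(F\) is the unique exceptional prime (Proposition~\ref{prop:all-v2-unique-exceptional}), \(X\) is factorial (Lemma~\ref{lem:hb-factorial}), and \(W\) is the valuation Proj (Proposition~\ref{prop:hb-valuation-proj}, whose proof uses only crepancy and \(e(E/F)=1\)). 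The theorem therefore applies. A crepant resolution \(Y\to X\) cannot be an isomorphism because \(X\) is singular, so \(Y\simeq W\), and likewise \(Y'\simeq W\).

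The main obstacle is confirming that the rigidity theorem really covers subcase (a). One must check that Lemma~\ref{lem:hb-tail-dvr} and Proposition~\ref{prop:hb-valuation-proj} hold for the ordinary blowup with all weights equal to one; they do, with \(r=x_i\) and \(s=y_i\). One must also use that terminality of a smooth \(W\) gives positive discrepancy for every divisor exceptional over \(W\), which holds because any exceptional divisor over a smooth variety has discrepancy at least \(1\).
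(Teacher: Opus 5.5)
Your proposal is correct and follows the same route as the paper. The paper applies Theorem~\ref{thm:hb-rigidity} whenever \(W\) is terminal, which covers \(V_2^{\oplus p}\) (where \(W\) is smooth, so \(W\) is the unique crepant resolution) and the higher-block case with \(p\ge5\) (where \(W\) is singular, so the statement is vacuous), and it uses the explicit list \(\widetilde W, W, W', X\) for \(p=3\), \(V=V_3\); your checks that Lemma~\ref{lem:hb-tail-dvr} and Proposition~\ref{prop:hb-valuation-proj} apply to the ordinary blowup only spell out what the paper leaves implicit.
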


%% file: pcyclic-sections/boundary-classification.tex
\section{The quotient singularities admitting projective crepant resolutions}\label{sec:pcyclic-classification}

\begin{proof}[Proof of Corollary~\ref{cor:boundary-classification}]
By Yasuda's \(p\)-cyclic McKay correspondence, when \(G\) acts linearly on \(V\) without pseudo-reflections, if \(V/G\) admits a projective crepant resolution, then \(D_V=p\). So Corollary~\ref{cor:unique-pcr} shows the uniqueness of the crepant resolution.

If \(p\ge5\), Theorem~\ref{thm:higher-block} shows that every nontrivial indecomposable summand of \(V\)
is \(V_2\), and 
\[
  V\simeq V_2^{\oplus p}\oplus V_1^{\oplus a}
\]
for some \(a\ge0\).  Theorem~\ref{thm:all-v2} gives a projective
crepant resolution.  

If \(p=3\), the only nontrivial indecomposable modules are \(V_2\)
and \(V_3\), and their contributions to \(D_V\) are \(1\) and \(3\),
respectively.  Hence \(D_V=3\) leaves precisely two essential cases:
\[
  V_2^{\oplus3}
  \qquad\text{or}\qquad
  V_3.
\]
The first is covered by Theorem~\ref{thm:all-v2}.  For the second one,
it is considered in Remark~\ref{rem:V3-chart-age}.

If \(p=2\), the only essential case is \(V_2^{\oplus 2}\), which is also treated in both Theorem~\ref{thm:all-v2} and \cite[Example~6.24]{Yasuda2014pCyclic}.
\end{proof}